\newcommand{\xdownarrow}[1]{%
  {\left\downarrow\vbox to #1{}\right.\kern-\nulldelimiterspace}
}
\newcommand{\rmM}{{\rm M}}
\newcommand{\Res}{{\rm Res}}
\newcommand{\Gal}{{\rm Gal}}
\newcommand{\ZZ}{\mathbb{Z}}
\newcommand{\PP}{\mathbb{P}}
\newcommand{\QQ}{\mathbb{Q}}
\newcommand{\calA}{\mathcal{A}}
\newcommand{\calF}{\mathcal{F}}
\newcommand{\calL}{\mathcal{L}}
\newcommand{\calU}{\mathcal{U}}
\newcommand{\scrG}{\mathscr{G}}
\numberwithin{equation}{subsection}
\newtheorem{Theorem}{Theorem}[section]
\newtheorem{Remark}[Theorem]{Remark}
\newtheorem{Remarks}[Theorem]{Remarks}
\newtheorem{Lemma}[Theorem]{Lemma}
\newtheorem{Proposition}[Theorem]{Proposition}
\newtheorem{Corollary}[Theorem]{Corollary}
\newtheorem{Main Theorem}[Theorem]{Main Theorem}
\newtheorem{Definition}[Theorem]{Definition}
\renewcommand*\env@matrix[1][*\c@MaxMatrixCols c]{%
  \hskip -\arraycolsep
  \let\@ifnextchar\new@ifnextchar
  \array{#1}}
\newif\ifgrading
\newcommand{\U}{{\mathcal U}}
\newcommand{\calG}{{\mathcal G}}
\newcommand{\calO}{{\mathcal O}}
\newcommand{\Mloc}{{\rm M}^{\rm loc}}
\newcommand{\Spec}{{\rm Spec \, } }
\newcommand{\wti}{\widetilde}
\newcommand{\Addresses}{{
		\bigskip
		\footnotesize
		
		\textsc{Department of Mathematics, Universität Münster, Münster, 48149, Germany}\par\nopagebreak
		\textit{E-mail address:} \texttt{io.zachos@uni-muenster.de}\\
		
		\textsc{Department of Applied Mathematics,
University of Science and Technology Beijing, Beijing, 100083, China}\par\nopagebreak
		\textit{E-mail address:} \texttt{zhihaozhao@ustb.edu.cn}
}}	
\begin{document}

\title[Semi-Stable models for ramified Unitary Shimura Varieties]{Semi-stable and splitting models for unitary Shimura varieties over ramified
places. II.} 
	\date{}
	\author{I. Zachos and Z. Zhao}
	\maketitle

	\begin{abstract}
We consider Shimura varieties associated to a unitary group of signature $(n-1,1)$. For these varieties, we construct $p$-adic integral models over odd primes $p$ which ramify in the imaginary quadratic field with level subgroup at $p$ given by the stabilizer of a vertex lattice in the hermitian space. Our models are given by a variation of the construction of the splitting models of Pappas-Rapoport 
and they have a simple moduli theoretic description. By an explicit calculation, we show that these splitting models are normal, flat, Cohen-Macaulay and with reduced special fiber. In fact, they have relatively simple singularities: we show that a single blow-up along a smooth codimension one subvariety of the special fiber produces a semi-stable model. This also implies the existence of semi-stable models of the corresponding Shimura varieties.
	\end{abstract}
	
	\tableofcontents

\section{Introduction}\label{Intro}

\subsection{} 
The aim of this paper is to construct “good” integral models of Shimura varieties over places of bad reduction. Here, we consider Shimura varieties
associated to unitary groups of signature $(n-1,1)$ over an imaginary quadratic field $K$. These Shimura varieties are of PEL type and so they can be described as moduli spaces of abelian varieties with polarization, endomorphisms and level structure. It is desirable to define such integral models by a suitable extension of the moduli problem. Such models should be useful in various arithmetic applications.

Shimura varieties have canonical models over the reflex number field $E$. In the cases we consider here the reflex field is $E=K$. They are also expected to give rise to reasonable integral models. However, the behavior of these depends very much on the “level subgroup”. The level subgroup we consider here is determined by the choice of a vertex lattice in a hermitian space of dimension $n$. This stabilizer, by what follows below, is not connected when $n$ is even, so not parahoric. However, by using the work of Rapoport-Zink \cite{RZbook} 
we first construct $p$-adic integral models, which have simple and explicit moduli descriptions but are not always flat. The \'etale local structure of all these models is controlled by the local structure of certain simpler schemes the \textit{naive local models} which are defined in terms of linear algebra data inside the product of Grassmannian varieties. 
Inspired by the work of Pappas-Rapoport \cite{PR2}, we consider a variation of the above moduli problem (parametrizing abelian schemes), \textit{the splitting model}, 
where we add in the moduli problem an extra linear data of a flag of the Hodge filtration with some restricting properties. 
This is essentially an instance of the notion of a ``linear modification" introduced in \cite{P}. Then, we show that the splitting models are flat, normal, Cohen-Macaulay and with reduced special fiber. We also resolve their singularities which leads to regular models for these Shimura varieties 
over the $p$-adic integers $\mathbb{Z}_p$. We anticipate that our constructions will have applications to the study of arithmetic intersections of special cycles and Kudla’s program. (See for example, \cite{BHKR} and \cite{HLSY}, for some works in this direction.)

\subsection{} Let us give some details. To explain our results, we need to introduce some notation. 
Let $W$ be a $n$-dimensional $K$-vector space, equipped with a non-degenerate hermitian form $\phi$. Consider the group of unitary similitudes $G = GU_n$ for $(W,\phi)$ of dimension $n> 3$ over $K$. Let us mention here that the cases $n =2, 3$ have already been studied; see \cite[Theorem 2.6.2]{PRS} and \cite[Remark 2.6.13]{PRS} from the survey for more details. Fix a conjugacy class of homomorphisms $h: \Res_{\mathbb{C}/\mathbb{R}}\mathbb{G}_{m,\mathbb{C}}\rightarrow GU_n$ that corresponds to a Shimura datum $(G,X) = (GU_n,X_h)$ of signature $(n-1,1)$. The pair $(G,X)$ gives rise to a Shimura variety $Sh(G,X)$ over the reflex field $E=K$. Let $p$ be an odd prime number which ramifies in $K$ and set $K_1 =K_v $ where $v$ is the unique prime ideal of $K$ above $(p)$. Denote by $\mathcal{O}$ the ring of integers of $K_1$ and let $\pi$ be a uniformizer of $\mathcal{O}$. Set $V=W\otimes_{\QQ} \QQ_p$. 
We assume that the hermitian form $\phi$ is split on $V$, i.e. there is a basis $e_1, \dots, e_n$ such that 
\[
\phi(ae_i,be_{n+1-j}) = \overline{a}b\delta_{i,j} \quad \text{for  all} \quad a,b \in K_1, 
\]
where $a \mapsto \overline{a}$ is the nontrivial element of $\text{Gal}(K_1/\QQ_p)$. We denote by 
\[
\Lambda_i = \text{span}_{\calO} \{\pi^{-1}e_1, \dots, \pi^{-1}e_i, e_{i+1}, \dots, e_n\}
\]
the standard lattices in $V$. Consider the (quasi-)parahoric stabilizer subgroup
\[
P_I:=\{g\in GU_n\mid g\Lambda_i=\Lambda_i , \quad \forall i \in I\},
\]
for the nonempty subsets $I  \subset \{0, \dots, m\}$ where $m = \lfloor n/2\rfloor$ with the property that  
\begin{equation}\label{IpropertyIntro}
\text{for } n \text{ even, if } m-1 \in I \Longrightarrow m \in I.
\end{equation} 
(See also \cite{Luo}). More precisely, as in \S \ref{ParahoricSbgrs}, when $n$ is odd $P_I$ is a parahoric subgroup. When $n$ is even, $P_I$ is not a parahoric subgroup since it contains a parahoric subgroup with index 2 and the corresponding parahoric group scheme is its connected component $ P_I^\circ$. Let $\mathcal{L}_I$, as in \S \ref{Prel.1}, be the
self-dual multichain consisting of lattices $\{\Lambda_j\}_{j\in n\mathbb{Z} \pm I}$. Let $\mathcal{G}_I = \underline{{\rm Aut}}(\mathcal{L}_I)$ be the (smooth) group scheme over $\mathbb{Z}_p$ with $P_{I} = \mathcal{G}_I(\mathbb{Z}_p)$ and $G\otimes_{\mathbb{Z}_p}\mathbb{Q}_p $ as its generic fiber.   

Choose also a sufficiently small compact open subgroup $K^p$ of the prime-to-$p$ finite adelic points $G({\mathbb A}_{f}^p)$ of $G$ and set $\mathbf{K}=K^pP_{I}$. The Shimura variety  ${\rm Sh}_{\mathbf{K}}(G, X)$ 
is of PEL type and has a canonical model over the reflex field $K$. 

Next, by using the work of Rapoport and Zink \cite[Definition 6.9]{RZbook} we define the moduli scheme $\mathcal{A}^{\rm naive}_{\mathbf{K}}$ over $\mathcal{O}$ whose generic fiber agrees with ${\rm Sh}_{\mathbf{K}}(G, X)$. In particular, $\mathcal{A}^{\rm naive}_{\mathbf{K}}$ is a moduli of quadruples $(A,\iota, \bar{\lambda}, \bar{\eta})$ where $A$ is an abelian variety and $(\iota, \bar{\lambda}, \bar{\eta})$ are some additional data of polarization and level structure (see \S \ref{ShimuraVarSec} for more details). The functor $\mathcal{A}^{\rm naive}_{\mathbf{K}}$ is representable by a quasi-projective scheme over $\mathcal{O}$. The moduli scheme $\mathcal{A}^{\rm naive}_{\mathbf{K}}$ is connected to the \textit{naive local model} ${\rm M}^{\rm naive}_I$ via the \textit{local model diagram} 
 \begin{equation}\label{LMdiagramUnitary}
\mathcal{A}^{\rm naive}_{\mathbf{K}} \ \xleftarrow{\pi^{\rm }_{\mathbf{K}}} \Tilde{\mathcal{A}}^{\rm naive}_{\mathbf{K}} \xrightarrow{q^{\rm }_{\mathbf{K}}} {\rm M}^{\rm naive}_I
\end{equation}
where the morphism $\pi^{\rm }_{\mathbf{K}}$ is a $\mathcal{G}_I$-torsor and $q^{\rm }_{\mathbf{K}}$ is a smooth and $\mathcal{G}_I$-equivariant morphism. 
Equivalently, there exists a relatively representable morphism of algebraic stacks
 \[
\phi:  \mathcal{A}^{\rm naive}_{\mathbf{K}} \to [  {\rm M}_I^{\rm naive}/ \mathcal{G}_I\otimes_{\mathbb{Z}_p}\mathcal{O}]
 \]
which is smooth of relative dimension $\text{dim}(G)$. In particular, since $\calG_I$ is smooth, the above imply that $\mathcal{A}^{\rm naive}_{\mathbf{K}} $ is \'etale locally isomorphic to ${\rm M}^{\rm naive}_I$. Next, we form the cartesian product of $\phi$ with the morphism 
$\Mloc_I \hookrightarrow {\rm M}_I^{\rm naive}$,  where $\Mloc_I  $ is the \textit{local model}, defined as the scheme theoretic closure of the generic fiber $ {\rm M}^{\rm naive}_I \otimes_{\calO} K_1$ in ${\rm M}^{\rm naive}_I$,
\[
\begin{matrix}
\mathcal{A}^{\rm loc}_{\mathbf{K}} &\longrightarrow& [{\rm M}^{\rm loc}_I/ \mathcal{G}_{I,\calO}] \\
\Big\downarrow && \Big\downarrow \\
\mathcal{A}^{\rm naive}_{\mathbf{K}} &\longrightarrow & [{\rm M}^{\rm naive}_I/ \mathcal{G}_{I,\calO}] .
\end{matrix}
\]
The scheme $\mathcal{A}^{\rm loc}_{\mathbf{K}}$ is a closed subscheme of $\mathcal{A}^{\rm naive}_{\mathbf{K}}$ and is a linear modification of $\mathcal{A}^{\rm naive}_{\mathbf{K}}$ in the sense of \cite[\S 2]{P}. 

We now consider a variation of the moduli of abelian schemes $\mathcal{A}^{\rm spl}_{\mathbf{K}}$ where we add in the moduli problem of $\mathcal{A}^{\rm naive}_{\mathbf{K}}$ an additional subspace in the Hodge filtration $ w_{A^\vee} \subset M(A)$ where $M(A)$ is the dual of the deRham cohomology $H_{dR}^1(A)$ of the universal abelian variety $A$ (see \S \ref{ShimuraVarSec} for more details) with certain conditions to imitate the definition of the splitting model ${\rm M}_I^{\rm spl}$. We refer the reader to \S \ref{SplModelsSection} for the definition of the splitting model, which also has an explicit  moduli description. (Here $A^\vee$ is the dual abelian scheme.) There is a forgetful morphism $\tau :   {\rm M}_I^{\rm spl} \longrightarrow  {\rm M}^{\rm naive}_I $ and as above we can form the cartesian product of $\phi $ with $\tau$ and get
\begin{equation}\label{SplCartDiag}
\begin{matrix}
\mathcal{A}^{\rm spl}_{\mathbf{K}} &\longrightarrow & [{\rm M}^{\rm spl}_I/ \mathcal{G}_{I,\calO}] \\
\Big\downarrow && \Big\downarrow \\
\mathcal{A}^{\rm naive}_{\mathbf{K}} &\longrightarrow & [{\rm M}^{\rm naive}_I/ \mathcal{G}_{I,\calO}] .
\end{matrix}
\end{equation}
$\mathcal{A}^{\rm spl}_{\mathbf{K}}$ has the same \'etale local structure as ${\rm M}^{\rm spl}_I$ and it is a linear modification of $\mathcal{A}^{\rm naive}_{\mathbf{K}}$. 
From  now on, we will focus on the non-empty subsets $I  =  \{\ell\}$ where $ 0 \leq \ell \leq m$  and $ \ell \neq 0,\, m-1,\, m$ when $n = 2m$ and $ \ell \neq 0,\, m $ when $ n = 2m + 1$. We will call such index sets \textit{strongly non-special}. In this setting, $P_I$ is a maximal (quasi-)parahoric subgroup. 
In a very recent paper \cite{Luo}, Luo gave a moduli description of $\Mloc_I  $ by adding in ${\rm M}^{\rm naive}_I$ ``the wedge and spin conditions". 
In this paper, 
we consider ${\rm M}^{\rm spl}_I$ and show that by imposing the existence of the additional subspace in ${\rm M}^{\rm spl}_I$ implies the wedge and spin conditions. Thus, we deduce that the scheme theoretic image $\tau({\rm M}^{\rm spl}_I)$ coincides with $\Mloc_I  $.
Notably, the spin conditions are very complicated to check. Therefore, the splitting model that we consider here has a more compact moduli description. Moreover, we want to highlight the novelty in our splitting model's construction: instead of adding two subspaces as expected from \cite[Definition 14.1]{PR2}, we add only one (see Remark \ref{rk 33}). This variation was first used by Richarz \cite{Richarz} in the almost $\pi$-modular case, i.e. $n =2m+1$ is odd and $I={\{m\}}$.

\subsection{} In what follows, we will adhere to the strongly non-special case with signature $(n-1,1)$. One of the main results of the present paper is the following theorem.
\begin{Theorem}\label{IntroRegLM}
     
 For every $K^p$ as above, there is a
 scheme $\mathcal{A}^{\rm spl}_{\mathbf{K}}$, flat over $\Spec(\calO)$, 
 with
 \[
\mathcal{A}^{\rm spl}_{\mathbf{K}}\otimes_{\mathcal{O}} K_1={\rm Sh}_{\mathbf{K}}(G, X)\otimes_{K} K_1,
 \]
 and which supports a local model diagram
  \begin{equation}\label{introLMdiagramReg1}
\begin{tikzcd}
&\wti{\mathcal{A}}^{\rm spl}_{\mathbf{K}}(G, X)\arrow[dl, "\pi^{\rm spl}_K"']\arrow[dr, "q^{\rm spl}_K"]  & \\
\mathcal{A}^{\rm spl}_{\mathbf{K}}  &&  {\rm M}_I^{\rm spl}
\end{tikzcd}
\end{equation}
such that:
\begin{itemize}
\item[a)] $\pi^{\rm spl}_{\mathbf{K}}$ is a $\mathcal{G}_I$-torsor 
and $q^{\rm spl}_{\mathbf{K}}$ is smooth and $\calG_I$-equivariant.

\item[b)] $\mathcal{A}^{\rm spl}_{\mathbf{K}}$ is normal and Cohen-Macaulay and has a reduced special fiber. 
\end{itemize}
\end{Theorem}

From (\ref{SplCartDiag}) and the fact that $  \mathcal{A}^{\rm spl}_{\mathbf{K}}$ is flat we get that $  \mathcal{A}^{\rm loc}_{\mathbf{K}}$ is the image of $\mathcal{A}^{\rm spl}_{\mathbf{K}}$ in $\mathcal{A}^{\rm naive}_{\mathbf{K}}$. From the local model diagram (\ref{introLMdiagramReg1}) it follows that every point of $\mathcal{A}^{\rm spl}_{\mathbf{K}} $ has an \'etale neighborhood which is also \'etale over ${\rm M}^{\rm spl}_I$, and thus it is enough to show that ${\rm M}^{\rm spl}_I$ has the above nice properties. To do this, we explicitly calculate an open affine covering $ \cup_{i_0=1}^n \calU_{i_0}$ of $  \tau^{-1}(*)$ where $*$ is the “worst point” of ${\rm M}^{\rm loc}_I$, i.e. the unique closed $\calG_I$-orbit supported in the special fiber; see \S \ref{Affine charts} 
for more details. The open subschemes $\calU_{i_0} $ are isomorphic to
\[
\Spec \calO[X_1, X_2, Y ]/(\text{rk}(\left[\ 
\begin{matrix}
X_1\\ \hline
X_2  
\end{matrix}\ \right]) - 1,\,\wedge^2\left[\ X_2\mid HY\ \right],\, Y^t\cdot X_2-2\pi)
\]
where $
X_1$, $X_2$ and $Y$ are matrices of sizes $2\ell\times 1$, $(n-2\ell)\times 1$ and $(n-2\ell)\times 1$ with indeterminates as entries and $H$ is the unit antidiagonal matrix of size $(n-2\ell)\times (n-2\ell).$
The rank condition is expressed by imposing that a
certain $i_0$-th entry of the matrix $ [X^t_1\mid X_2^t]$ is a unit. It is worth noting that these affine local charts are very similar to the one that the first author found in the orthogonal local models \cite{I.Z} (see also \cite{PaZa}). By using this observation, we show in \S \ref{FlatReducSec}, that this covering has the nice properties of Theorem \ref{introLMdiagramReg1} (b) and thus we obtain
\begin{Theorem}
\begin{itemize}
\item[a)]
The splitting model ${\rm M}^{\rm spl}_I $ is flat, normal and Cohen-Macaulay. 
\item[b)] The special special fiber of ${\rm M}^{\rm spl}_I $ is reduced and has three irreducible components, which admit moduli descriptions, and one of them is not smooth.
\end{itemize}
\end{Theorem}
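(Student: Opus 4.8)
The plan is to reduce everything to the explicit affine charts $\calU_{i_0}$ described in the introduction, whose coordinate rings are of the form
\[
R_{i_0} = \calO[X_1, X_2, Y]/\bigl(\text{rk-condition},\ \wedge^2[\,X_2\mid HY\,],\ Y^t X_2 - 2\pi\bigr),
\]
with the rk-condition forcing some fixed entry of $[X_1^t\mid X_2^t]$ to be a unit. First I would carry out the affine-chart computation itself: starting from the moduli description of ${\rm M}^{\rm spl}_I$ in \S\ref{SplModelsSection}, pick the worst point $*$ (the unique closed $\calG$-orbit in the special fiber), and trivialize the flag data in a neighborhood of $\tau^{-1}(*)$ to produce the open cover $\cup_{i_0=1}^n \calU_{i_0}$ together with the stated presentation of each $\calU_{i_0}$. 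Because $q^{\rm spl}_{\mathbf K}$ in the local model diagram is smooth and $\calG$-equivariant and $\pi^{\rm spl}_{\mathbf K}$ is a $\calG$-torsor, all the properties in part (a)—flatness, normality, Cohen–Macaulayness—and the reducedness in part (b) are \'etale-local, hence it suffices to verify them for the rings $R_{i_0}$ (and, by $\calG$-translation, only for the charts meeting $\tau^{-1}(*)$; away from $\tau^{-1}(*)$ the scheme is already known to be nice, e.g. smooth on the generic fiber and by the exotic/known cases near non-worst points).

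For part (a): using the unit entry coming from the rk-condition, I would eliminate one variable and exhibit $R_{i_0}$ (after localization) as a quotient of a polynomial ring over $\calO$ by an explicit ideal generated by the $2\times 2$ minors $\wedge^2[X_2\mid HY]$ together with the single relation $Y^t X_2 - 2\pi$. The key point is that this presentation is essentially identical to the one the first author obtained for the orthogonal local models in \cite{I.Z} (cf.\ also \cite{PaZa}): there the analogous ring is shown to be a complete intersection–type quotient that is flat over $\calO$ with Cohen–Macaulay and normal total space. I would transport that analysis: compute the dimension of $R_{i_0}\otimes_\calO \kappa$ and of $R_{i_0}\otimes_\calO K_1$, check they agree (giving flatness over the DVR $\calO$ via the dimension/fiber criterion once domain/Cohen–Macaulay is in hand), then establish Cohen–Macaulayness by exhibiting the defining ideal as generated by a regular sequence up to the minor relations, or by identifying $R_{i_0}$ with a determinantal ring (the minors of a $(n-2\ell)\times 2$ matrix) modulo one extra hypersurface section and invoking the Cohen–Macaulayness of generic determinantal rings together with a regular-element argument for the section $Y^tX_2-2\pi$. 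Normality will follow from Serre's criterion $(R_1)+(S_2)$: $(S_2)$ is Cohen–Macaulayness, and $(R_1)$ I would get by showing the singular locus has codimension $\ge 2$, computing the Jacobian on the chart and checking the non-smooth locus is contained in the special fiber in codimension $\ge 2$.

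For part (b): reducedness of the special fiber $R_{i_0}\otimes_\calO\kappa$ follows once the special fiber is generically reduced (check $(R_0)$: it is smooth at the generic point of each component) and $(S_1)$, the latter being inherited from Cohen–Macaulayness of the special fiber, which itself follows from Cohen–Macaulayness of $R_{i_0}$ and flatness over $\calO$. To pin down the irreducible components, I would set $\pi=0$ in the presentation: the relation becomes $Y^tX_2=0$ and $\wedge^2[X_2\mid HY]=0$, so on the locus $X_2\neq 0$ the vector $HY$ is a scalar multiple of $X_2$ and then $Y^tX_2=0$ forces that scalar or the norm to vanish, splitting off components according to whether $HY=0$, or $X_2=0$, or the residual quadratic/incidence condition holds; globalizing over the cover and matching with the $\calG$-orbit stratification of the special fiber of ${\rm M}^{\rm loc}_I$ (which has, for strongly non-special $I$, exactly this combinatorial type) yields the three components, each of which I would identify with an explicit geometric object (a Grassmannian bundle, a $\PP^1$-bundle, and a quadric/determinantal-type locus), giving the promised moduli descriptions. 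Finally, by the local model diagram \eqref{introLMdiagramReg1} these \'etale-local statements globalize to ${\rm M}^{\rm spl}_I$ and, via \eqref{SplCartDiag}, to $\mathcal{A}^{\rm spl}_{\mathbf K}$, proving Theorem \ref{IntroRegLM}(b) as well.

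The main obstacle I anticipate is the bookkeeping in the affine-chart computation and, specifically, verifying Cohen–Macaulayness of $R_{i_0}$: one must be careful that the mixed ideal (determinantal minors $\wedge^2[X_2\mid HY]$ plus the hypersurface $Y^tX_2-2\pi$, plus the partial elimination from the rk-condition) is not just set-theoretically but scheme-theoretically a complete intersection over a Cohen–Macaulay determinantal base, i.e.\ that $Y^tX_2-2\pi$ is a nonzerodivisor on the determinantal ring and cuts it in the expected codimension. Matching the orthogonal-model computation of \cite{I.Z} should make this manageable, but the presence of the extra "$2\pi$" (harmless since $p$ is odd, so $2$ is a unit) and the exact shape of the rank condition need to be tracked precisely to ensure the irreducible-component count in (b) comes out to three rather than degenerating.
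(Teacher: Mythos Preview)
Your proposal follows essentially the same route as the paper: reduce to the affine charts $\calU_{i_0}$, identify their coordinate rings with the orthogonal local model rings of \cite{I.Z}, and import flatness, Cohen--Macaulayness, and reducedness of the special fiber from there. Two simplifications in the paper are worth noting. First, the paper splits the charts into two cases: for $2\ell+1\le i_0\le n$ (unit in $X_2$) Proposition~\ref{prop 42}(2) shows $\calU_{i_0}$ is already semi-stable of the form $u\cdot(V_2^tHV_2)=2\pi$, so nothing further is needed; only for $1\le i_0\le 2\ell$ does one invoke \cite{I.Z}, after first factoring $\calU_{i_0}\cong\mathbb{A}^{2\ell-1}_{O_F}\times T$ to strip off the free $V_1$-variables. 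Second, for normality the paper does not use Serre's criterion directly but instead the shortcut: $O_F$-flat with smooth generic fiber and reduced special fiber implies normal (via \cite[Prop.~9.2]{PZ}); this avoids the Jacobian codimension computation you propose. Finally, your guessed moduli descriptions of the three components (Grassmannian bundle, $\PP^1$-bundle, quadric) do not match the paper's: in \S 6 the components $M_1,M_2,M_3$ are cut out by intrinsic conditions on $(\calF_\ell,\calF_{n-\ell},\calG_{n-\ell})$ such as $t\calF_\ell=0$, $\calG_{n-\ell}\subset A_\ell(\calG_{n-\ell}^{\perp'})$, etc., using a modified pairing on $(t+\pi)\Lambda_{n-\ell}\times(t-\pi)\Lambda_\ell$ rather than the bundle descriptions you suggest.
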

Under the local model diagram, (see \S \ref{ShimuraVarSec}), $  \tau^{-1}(*)$ corresponds to the locus where the Hodge filtration $w_{A^\vee}$ of the universal dual abelian scheme $A$ is annihilated by the action of the uniformizer $\pi$. 
Consider the blow-up $\mathcal{A}^{\rm bl}_{\mathbf{K}}$ of $\mathcal{A}^{\rm spl}_{\mathbf{K}} $ along this locus. The second main result of the paper is the following theorem.
\begin{Theorem}
$\mathcal{A}^{\rm bl}_{\mathbf{K}}$ is a semi-stable integral model for the Shimura variety ${\rm Sh}_{\mathbf{K}}(G, X)$.
\end{Theorem}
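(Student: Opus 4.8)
The plan is to reduce the statement to an explicit local calculation on the splitting model, exactly as the authors did for Theorem~\ref{introLMdiagramReg1}(b). Since $\mathcal{A}^{\rm spl}_{\mathbf{K}}$ carries the local model diagram \eqref{introLMdiagramReg1} with $\pi^{\rm spl}_{\mathbf{K}}$ a $\mathcal{G}$-torsor and $q^{\rm spl}_{\mathbf{K}}$ smooth, every point of $\mathcal{A}^{\rm spl}_{\mathbf{K}}$ has an \'etale neighborhood which is \'etale over ${\rm M}^{\rm spl}_I$; blowing up commutes with \'etale base change (and with smooth morphisms, once one checks the center pulls back correctly), so it suffices to show that the blow-up of ${\rm M}^{\rm spl}_I$ along the closed subscheme $\tau^{-1}(*)$ is regular with semi-stable reduction. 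The center $\tau^{-1}(*)$ is the locus where the extra Hodge subspace is annihilated by $\pi$; in the affine charts $\calU_{i_0}\cong \Spec \calO[X_1,X_2,Y]/(\dots)$ displayed above this is cut out by a simple linear ideal (the vanishing of the $Y$-coordinates together with the worst-point conditions on $X_1,X_2$), so one can write the blow-up down by hand.

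\medskip

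First I would fix a chart $\calU_{i_0}$ and describe $\tau^{-1}(*)\cap\calU_{i_0}$ by explicit equations; I expect it to be a smooth (in fact a linear-subspace) subvariety of the special fiber of codimension one, as asserted in the introduction. Next I would cover the blow-up $\mathrm{Bl}_{\tau^{-1}(*)}\calU_{i_0}$ by its standard affine charts indexed by a choice of generator of the ideal of the center that is inverted, and in each such chart solve the relations $\mathrm{rk}[X_1;X_2]=1$, $\wedge^2[X_2\mid HY]=0$, $Y^tX_2=2\pi$ after substituting the blow-up coordinates. The key point is that the chart equations are formally the same as the orthogonal local model charts of \cite{I.Z}, where the analogous blow-up is known to yield a semi-stable (regular, with smooth special fiber components crossing normally) scheme; so I would either cite that computation directly or reproduce the short elimination argument showing that after the blow-up the defining ideal becomes, \'etale locally, of the form $(uv - \pi)$ or $(u_1\cdots u_r - \pi)$ in a polynomial ring over $\calO$, i.e.\ strictly semi-stable.

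\medskip

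Then I would transfer the conclusion back: because $\mathcal{A}^{\rm bl}_{\mathbf{K}}$ is \'etale locally isomorphic to $\mathrm{Bl}_{\tau^{-1}(*)}{\rm M}^{\rm spl}_I$ (the center being the preimage of the center under the local model diagram, which is compatible with $\pi^{\rm spl}_{\mathbf{K}}$ and $q^{\rm spl}_{\mathbf{K}}$ since both are flat with geometrically the stated fibers), the semi-stability is an \'etale-local property and descends. I would also record that $\mathcal{A}^{\rm bl}_{\mathbf{K}}\otimes_{\calO}K_1 = {\rm Sh}_{\mathbf{K}}(G,X)\otimes_K K_1$, since the center $\tau^{-1}(*)$ lies in the special fiber and the blow-up is therefore an isomorphism on generic fibers. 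Finally I would note that ``semi-stable integral model'' here means regular, flat, projective over $\calO$ with special fiber a reduced divisor with normal crossings; regularity and flatness follow from the chart computation, reducedness of the special fiber from the $(u_1\cdots u_r-\pi)$ normal form, and the generic-fiber identification from the previous sentence.

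\medskip

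The main obstacle I anticipate is bookkeeping around the center of the blow-up: one must check that $\tau^{-1}(*)$ is a Cartier-free, smooth, codimension-one subvariety of the \emph{special fiber} (not of the total space), verify that its formation is compatible with the torsor $\pi^{\rm spl}_{\mathbf{K}}$ and the smooth map $q^{\rm spl}_{\mathbf{K}}$ so that blowing up really does commute with the local model diagram, and then carefully track, in each of the finitely many blow-up charts over each $\calU_{i_0}$, which coordinate is inverted and confirm that no new singularities are introduced at the exceptional locus. The algebra in each chart is elementary once the right coordinates are chosen, and the analogy with \cite{I.Z} and with Kr\"amer's self-dual case \cite{Kr} gives a clear template; the delicate part is purely organizational, ensuring the charts glue and that the three irreducible components of the special fiber of ${\rm M}^{\rm spl}_I$ become, after the single blow-up, smooth components meeting transversally.
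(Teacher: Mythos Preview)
Your proposal is correct and follows essentially the same route as the paper: reduce via the local model diagram and the fact that blowing up commutes with \'etale localization to the explicit blow-up of ${\rm M}^{\rm spl}_I$ along $\tau^{-1}(*)$, compute this blow-up chart by chart on the $\calU_{i_0}$, and verify the resulting equations are of the semi-stable form $uxy-2\pi$ (Proposition~\ref{prop 61} and Theorem~\ref{ResolutionSpl}); then transfer back to $\mathcal{A}^{\rm bl}_{\mathbf{K}}$ exactly as in Theorem~\ref{RegLM}. One small correction: in the charts the center $\tau^{-1}(*)\cap\calU_{i_0}$ is cut out simply by the ideal $(Y)$ (in the introduction's notation; $(Z_2)$ in the body), with the $X$-coordinates parameterizing the $\mathbb{P}^{n-1}$ fiber over the worst point---there are no additional ``worst-point conditions on $X_1,X_2$'' to impose, and $\tau^{-1}(*)$ is in fact an entire irreducible component of the special fiber (hence codimension one in the total space, not in the special fiber).
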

Since  blowing-up commutes with \'etale localization and the \'etale local structure of the moduli scheme $\mathcal{A}^{\rm spl}_{\mathbf{K}}$ is controlled by the local structure of the local model ${\rm M}_I^{\rm spl}$, it is enough to show the above statement for the corresponding splitting model. By using the explicit open affine covering $ \cup_{i_0=1}^n \calU_{i_0}$ of $  \tau^{-1}(*)$ we show 
\begin{Theorem}
The blow-up ${\rm M}_I^{\rm bl}$ of ${\rm M}_I^{\rm spl}$ along the smooth irreducible component $\tau^{-1}(*) $, which is a projective space of dimension $n-1$ of its special fiber, is regular and has special fiber a divisor with normal crossings. In fact, ${\rm M}_I^{\rm bl}$ is covered by open subschemes which are smooth over $
\Spec (\calO[u, x, y]/(uxy-2\pi))$. 
\end{Theorem}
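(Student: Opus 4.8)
The plan is to reduce everything to the explicit affine charts $\calU_{i_0}$ already computed for $\tau^{-1}(*)\subset {\rm M}_I^{\rm spl}$, and then carry out a single blow-up calculation chart by chart. Since blowing up commutes with the smooth, $\calG$-equivariant projection $q^{\rm spl}_{\mathbf K}$ and with \'etale localization, it suffices to work on ${\rm M}_I^{\rm spl}$ itself; and since $\tau^{-1}(*)$ is contained in the special fiber, away from this locus ${\rm M}_I^{\rm spl}$ is already known (by the previous theorems) to be smooth over $\Spec\calO$, so there the blow-up changes nothing. Thus the whole statement is local on the charts $\calU_{i_0}\cong \Spec\calO[X_1,X_2,Y]/(\rank[X_1;X_2]-1,\ \wedge^2[X_2\mid HY],\ Y^tX_2-2\pi)$, with the understanding that some fixed $i_0$-th coordinate of $[X_1^t\mid X_2^t]$ is a unit.

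First I would normalize each chart by using the unit coordinate: after inverting the $i_0$-th entry of $[X_1^t\mid X_2^t]$ and rescaling, the rank-one condition $\rank[X_1;X_2]=1$ lets me express all entries of $X_1,X_2$ as multiples of a single parameter, so $\calU_{i_0}$ becomes (a localization of) an affine space over $\calO$ in the remaining free variables among $X_1$, the direction vector, and $Y$, cut out by the two relations coming from $\wedge^2[X_2\mid HY]=0$ (collinearity of $X_2$ and $HY$) and $Y^tX_2=2\pi$. Concretely this should present $\calU_{i_0}$, after a change of variables, as $\Spec(R[u,x,y]/(uxy-2\pi))$ localized appropriately, matching the orthogonal local model charts of \cite{I.Z}; here the locus $\tau^{-1}(*)$ is cut out by $\pi$ together with the vanishing of the entries of $X_2$ (equivalently $Y$), i.e. the locus where $\pi$ annihilates $w_{A^\vee}$, which in these coordinates is the coordinate subspace $\{x=\pi=0\}$ or $\{y=\pi=0\}$, a smooth Cartier-type center isomorphic to $\PP^{n-1}$ in the special fiber.

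Next I would blow up this center in each chart. The blow-up of $\Spec\calO[u,x,y]/(uxy-2\pi)$ along a codimension-one smooth subvariety of the special fiber is a completely standard local computation: it is covered by two (or three) charts, in each of which one introduces the ratio of the two generators of the ideal of the center, substitutes back into $uxy-2\pi$, and observes that after cancellation one obtains a ring of the form $\calO[u',x',y']/(u'x'y'-2\pi)$ again, or a smooth $\calO$-algebra. So the key claim, ``${\rm M}_I^{\rm bl}$ is covered by opens smooth over $\Spec(\calO[u,x,y]/(uxy-2\pi))$,'' follows by writing out these blow-up charts and identifying each with (an open in) the model ring; the regularity and the normal-crossings statement for the special fiber are then immediate, since $\calO[u,x,y]/(uxy-2\pi)$ with $2\pi$ a uniformizer times a unit (recall $p$ is odd) is regular with special fiber the normal-crossings divisor $\{uxy=0\}$. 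One must also check the blow-up charts glue correctly and that the exceptional divisor together with the strict transform of the old special fiber is exactly this divisor with normal crossings; this is bookkeeping with the gluing already implicit in the covering $\cup_{i_0}\calU_{i_0}$.

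The main obstacle I anticipate is purely computational rather than conceptual: correctly tracking the change of variables that turns the matrix presentation of $\calU_{i_0}$ — with its rank condition, its $\wedge^2$ (collinearity) condition, and the bilinear relation $Y^tX_2=2\pi$ — into the clean three-variable hypersurface $uxy-2\pi$, and doing so uniformly in $i_0$ and in the parity of $n$ (the constraints on $\ell$ being $\ell\neq 0,m-1,m$ for $n=2m$ and $\ell\neq 0,m$ for $n=2m+1$ are exactly what guarantee $2\ell$ and $n-2\ell$ are both positive and the antidiagonal block $H$ is nondegenerate, so the reduction goes through). Once that normal form is in hand, identifying the center $\tau^{-1}(*)$ with a coordinate subspace $\cong\PP^{n-1}$ and running the blow-up is routine. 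A secondary point requiring a little care is verifying that the center is genuinely smooth and of pure codimension one in the special fiber globally — i.e. that the local pictures in the various $\calU_{i_0}$ patch to the asserted $\PP^{n-1}$ — but this follows from the moduli description of the special fiber's components in Theorem (the one with part (b) ``three irreducible components which admit moduli descriptions''), since $\tau^{-1}(*)$ is one of them.
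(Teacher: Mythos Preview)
There is a genuine gap in your reduction step. You claim that after normalizing with the unit coordinate, each chart $\calU_{i_0}$ can be put into the form $\Spec(R[u,x,y]/(uxy-2\pi))$ \emph{before} blowing up. This is only true for $2\ell+1\le i_0\le n$, where indeed $\calU_{i_0}\cong \Spec O_F[V_1,V_2,u]/(v_{i_0}-1,\ u\cdot(V_2^tHV_2)-2\pi)$ is already semi-stable and the blow-up along the center (which there is cut out by the principal ideal $(u)$) is an isomorphism. For $1\le i_0\le 2\ell$, however, the unit entry lies in $V_1$, and neither $V_2$ nor $Z_2$ has a distinguished unit; the chart is $\mathbb{A}^{2\ell-1}\times \Spec O_F[V_2,Z_2]/(\wedge^2[V_2\mid HZ_2],\ Z_2^tV_2-2\pi)$, a genuine determinantal scheme that is \emph{not} smooth over any $uxy-2\pi$ model. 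Indeed one of the three components of its special fiber, $V(J_2)$, is singular (Remark~\ref{NotNormalCrossing}), so no such presentation can exist. Your subsequent ``standard local computation'' of blowing up $\Spec\calO[u,x,y]/(uxy-2\pi)$ along $\{x=\pi=0\}$ would in any case be vacuous: in that ring $\pi\in(x)$, so the center is Cartier and the blow-up is the identity.

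What actually happens in the paper is the reverse of your order of operations: the $uxy-2\pi$ form only appears \emph{after} the blow-up. In the bad charts one blows up along the non-principal ideal $(Z_2)=(z_{2\ell+1},\dots,z_n)$; in the patch $t_{2\ell+1}=1$ (so $z_i=z_{2\ell+1}t_i$) the minor relations $\wedge^2[V_2\mid HZ_2]=0$ force $v_i=t_{n+2\ell+1-i}v_n$, and the single remaining relation $Z_2^tV_2=2\pi$ becomes $v_n\, z_{2\ell+1}\, q=2\pi$ with $q=\sum t_it_{n+2\ell+1-i}$. Since $\partial q/\partial t_n=2$ is a unit, this patch is smooth over $\Spec(O_F[u,x,y]/(uxy-2\pi))$. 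A smaller point: the center $\tau^{-1}(*)$ is cut out by $Y=0$ (i.e.\ $Z_2=0$) in these charts, not by $X_2=0$; the two are not equivalent, and in fact define distinct irreducible components $V(J_1)$ and $V(J_3)$ of the special fiber.
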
 


We can obtain similar results for the Shimura varieties ${\rm Sh}_{\mathbf{K}'}(G, X)$ where $\mathbf{K}' = K^p P_I^\circ$ (see \S \ref{ShimuraVarSec}). (Recall that $ P_I^\circ$ is the parahoric connected component of the stabilizer $P_I$.) Also, we can apply these results to obtain regular (formal) models of the corresponding Rapoport-Zink spaces.

\subsection{} We conclude this introduction by giving a brief summary of related known results in the literature and discussing some open problems. 
Assume that the signature is $(n-1,1)$. The $\pi$-modular case, when $n =2m$ is even and $I={\{m\}}$, and the almost $\pi$-modular case, when $n =2m+1$ is odd and $I={\{m\}}$, have been studied in \cite[\S 5.3]{PR} and \cite[Prop. 4.16]{A} respectively. These cases are of exotic good reduction, the local models ${\rm M}^{\rm loc}_I$ are smooth and hence are also the corresponding integral models $\mathcal{A}^{\rm loc}_{\mathbf{K}}$. Moreover, using the splitting models we get that in the $\pi$-modular case $\mathcal{A}^{\rm spl}_{\mathbf{K}}$ is smooth \cite[Thm. 1.1]{ZacZhao}, whereas in the almost $\pi$-modular case $\mathcal{A}^{\rm spl}_{\mathbf{K}}$ has semi-stable reduction  \cite[\S 5]{Richarz}. 
For the self-dual case, $I = {0}$, it was shown in \cite[\S 4.5]{P} that ${\rm M}^{\rm loc}_I $ does not have semi-stable reduction but this can be resolved by blowing up the unique singular point of its special fiber. In \cite{Kr}, the author using the corresponding splitting model constructed a regular semi-stable integral model $\mathcal{A}^{\rm spl}_{\mathbf{K}}$ (see also \cite{Zac1}). 
It is worth mentioning that our constructions and proofs of the splitting models generalize to all those cases except for the $\pi$-modular case, where in the moduli problem of the splitting model one needs to add the ``spin condition" to get an honest splitting model; see \cite[Thm. 1.2]{ZacZhao} for more details. Combining the above results with those of this paper, we obtain regular splitting models for any maximal (quasi-)parahoric level.

For general signatures $(n-s,s)$, the results are more sparse: in \cite{BH} and \cite{BZZ} the authors construct flat, Cohen-Macaulay splitting models with reduced special fiber and explicit moduli descriptions for self-dual and  $\pi$-modular level subgroups respectively.  

It is worth mentioning that in many of these works, the authors frequently consider variants of splitting models to obtain an “honest” splitting model, highlighting the need for a more general, group-theoretic definition of splitting models. Finally, we point out that little is known about splitting models for more general (quasi-)parahoric levels, and we intend to pursue this direction in future work.


\section{Preliminaries}\label{Prelim}
\subsection{Pairings and Standard Lattices}\label{Prel.1}
Let $F_0$ be a complete discretely valued field with ring of integers $O_{F_0}$, perfect residue field $k$ of characteristic $\neq 2$, and uniformizer $\pi_0$. We will choose $F/F_0$ to be a ramified quadratic extension and $\pi \in F$ a uniformizer with $\pi^2 = \pi_0$. Consider the $F$-vector space $V$ of dimension $n > 3$ and let 
\[
\phi: V \times V \rightarrow F
\]
be an $F/F_0$-hermitian form which we assume is split, i.e. there is a basis $e_1, \dots, e_n$ of $V$ such that
\[
\phi(ae_i,be_{n+1-j}) = \overline{a}b\delta_{i,j} \quad \text{for  all} \quad a,b \in F, 
\]
where $a \mapsto \overline{a}$ is the nontrivial element of $\text{Gal}(F/{F_0})$. Attached to $\phi$ are the respective alternating and symmetric $F_0$-bilinear forms $V \times V \rightarrow F_0$ given by
\[
\langle x, y \rangle = \frac{1}{2}\text{Tr}_{F /F_0} (\pi^{-1}\phi(x, y)) \quad \text{and} \quad ( x, y ) =  \frac{1}{2}\text{Tr}_{F /F_0} (\phi(x, y)).
\]
For any $O_F$-lattice $\Lambda$ in $V$, we denote by $\hat{\Lambda} = \{v \in V | \langle v, \Lambda \rangle \subset O_{F_0} \}$ the $ \langle \,,\,\rangle $-dual of $\Lambda$ and $\hat{\Lambda}^s$ is related to the $(\, , \,  )$-dual $\hat{\Lambda}^s = \{v \in V | ( v, \Lambda ) \subset O_{F_0} \}$ by the formula $ \hat{\Lambda}^s = \pi^{-1}\hat{\Lambda}$.
Both $\hat{\Lambda}$
and $\hat{\Lambda}^s$ are $O_F$-lattices in V, and the forms $\langle \, , \, \rangle $ and $ (\, , \,) $
induce perfect $O_{F_0}$-bilinear pairings
\begin{equation}\label{perfectpairing}
    \Lambda \times \hat{\Lambda} \xrightarrow{\langle \, , \, \rangle } O_{F_0}, \quad \Lambda \times \hat{\Lambda}^s \xrightarrow{ (\, , \,)} O_{F_0}
\end{equation}
for all $\Lambda$. Also, the uniformizing element $\pi$ induces a $O_{F_0}$-linear mapping on $\Lambda$ which we denote by $t$.

For $i= 0, \dots, n- 1$, we define the standard lattices
\begin{equation}\label{eq 212}
\Lambda_i = \text{span}_{O_F} \{\pi^{-1}e_1, \dots, \pi^{-1}e_i, e_{i+1}, \dots, e_n\}.	
\end{equation}
We consider nonempty subsets $I \subset \{0, \dots, m\}$ where $m = \lfloor n/2\rfloor$ with the requirement that 
\begin{equation}\label{Iproperty}
\text{for } n \text{ even, if } m-1 \in I \Longrightarrow m \in I;
\end{equation}
see \cite[\S 1.2.3(b)]{PR} for more details. (For odd $n$, no condition on $I$ is imposed). We complete the $\Lambda_i$ with $i \in I$ to a self-dual periodic lattice chain $\mathcal{L}_I$ by first including the duals 
$ \Lambda_{n-i}:=\hat{\Lambda}^s_i $ for $i \neq 0$ and then all the $\pi$-multiples: For $j \in \mathbb{Z}$ of the form $j = k \cdot n \pm i$ with $i \in I$ we put $\Lambda_{j} = \pi^{-k}\Lambda_i$. Then $ \{\Lambda_j\}_j$ form a periodic lattice chain $\mathcal{L}_I$ (with $\pi \Lambda_j =  \Lambda_{j-n}$) which satisfies $ \hat{\Lambda}_j = \Lambda_{-j}$. 


\subsection{Unitary Similitude Group and Parahoric Subgroups}\label{ParahoricSbgrs}
Consider the unitary similitude group 
\[
G:=GU(V, \phi) = \{g \in GL_F (V ) \,\, | \,\, \phi(gx, gy) = c(g)\phi(x, y), \quad  c(g) \in F^{\times}_0 \}
\]
and as in \cite[\S 1.1]{PR} choose a partition $n = r + s$ with $s \leq r$. We refer to the pair $(r, s)$ as the signature. 
Identifying $G \otimes F \simeq GL_{n,F} \times \mathbb{G}_{m,F}$, we define the cocharacter $ \mu_{r,s}$ as $ (1^{(s)}, 0^{(r)}, 1)$ of $D \times \mathbb{G}_m$, where $D$ is the standard maximal torus of diagonal matrices in $GL_n$; for more details see \cite[\S 1.1]{PR}. Denote by $E$ the reflex field of $\{ \mu_{r,s}\}$ and by $O := O_E$ its ring of integers. Note that $E = F_0$ if $r = s$ and $E = F$ otherwise (see \cite[\S 1.1]{PR}). 

We next recall the description of the parahoric subgroups of $G$ from \cite[\S 1.2]{PR}. 
We distinguish two cases. When $n=2m+1$ is odd, we let $I$ be a non-empty subset of $\{0,\dots,m\}$ and consider the stabilizer
\[ P_I:=\{g\in G\mid g\Lambda_i=\Lambda_i, \quad \forall i \in I\}\]
of the lattice chain $\Lambda_I$. From \cite[\S 1.2.3(b)]{PR}, we obtain:
\begin{Proposition}
$P_I$ is a parahoric subgroup and every parahoric subgroup of $G$ is conjugate to $P_I$ for a
unique nonempty $I \subset \{0,\dots,m\}$. 
The special maximal parahoric subgroups are exactly those
conjugate to $ P_{\{0\}}$ and $ P_{\{m\}}$.\qed   
\end{Proposition}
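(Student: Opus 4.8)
The plan is to run the standard Bruhat--Tits dictionary for $G=GU(V,\phi)$ over $F_0$, as in \cite[\S1.2]{PR}. First I would describe the reduced building $\calB(G,F_0)$: the center and the similitude factor of $GU_n$ act trivially on it, so $\calB(G,F_0)$ is the building of the derived group $SU(V,\phi)$, and since $n=2m+1$ is odd the split hermitian space is the unique rank-$n$ hermitian space over $F/F_0$ up to scaling (scaling $\phi$ by $c\in F_0^\times$ leaves the unitary group unchanged and multiplies the discriminant by $c^n$, which ranges over all classes when $n$ is odd); hence $G$ is quasi-split, with relative root system of type $BC_m$. Fixing the apartment attached to the maximal torus acting diagonally on $e_1,\dots,e_n$, its base alcove has exactly $m+1$ vertices, which I would index by $\{0,1,\dots,m\}$ so that vertex $i$ is the homothety class of the standard lattice $\Lambda_i$ of \eqref{eq 212}. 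Passing from $\{\Lambda_i\}_{i\in I}$ to its self-dual periodic completion $\mathcal{L}_I$ as in \S\ref{Prel.1} identifies each nonempty $I\subset\{0,\dots,m\}$ with the facet $\mathbf{f}_I$ of the base alcove spanned by the vertices in $I$, and conversely every facet of $\calB(G,F_0)$ is $G(F_0)$-conjugate to one of the $\mathbf{f}_I$.

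The second step is to match stabilizers. One checks that the stabilizer of $\mathbf{f}_I$ in $G(F_0)$ is precisely the lattice-chain stabilizer $P_I=\{g\in G\mid g\Lambda_i=\Lambda_i,\ \forall i\in I\}$: if $g$ fixes some $\Lambda_i$, $i\in I$, then $\det g\in O_F^\times$, so the similitude factor $c(g)$ is a unit, whence $g$ preserves the $(\,,\,)$-duals $\hat\Lambda_i^s$ and all $\pi$-multiples, i.e. all of $\mathcal{L}_I$. By definition the parahoric subgroup attached to $\mathbf{f}_I$ is $P_I^\circ=P_I\cap\ker\kappa_G$, the intersection with the kernel of the Kottwitz homomorphism, so the content of the first assertion is that $\kappa_G$ is trivial on $P_I$ when $n$ is odd. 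This is exactly where the odd case differs from the even one: the relevant computation of $\pi_1(G)_{I_{F_0}}$ — equivalently, of the component group of the Bruhat--Tits stabilizer group scheme — gives $P_I=P_I^\circ$ for $n$ odd, whereas for $n$ even the stabilizer of a $\pi$-modular lattice $\Lambda_m$, and hence $P_I$ whenever the constraint \eqref{Iproperty} applies, contains its parahoric with index $2$. I would simply invoke \cite[\S1.2.3]{PR} for this root-datum bookkeeping rather than reproduce it, and I expect this connectedness input to be the only genuinely nontrivial point; the rest is formal translation between lattice chains and the building.

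For the classification statements I would use the standard correspondence between conjugacy classes of parahoric subgroups of $G$ and $G(F_0)$-orbits of facets of $\calB(G,F_0)$: after the reductions above these orbits are in bijection with subsets of the vertex set $\{0,\dots,m\}$ of the base alcove, and distinct such subsets give non-conjugate parahorics because the vertices of the base alcove lie in distinct $G(F_0)$-orbits (one reads this off the type, together with the fact that the length-zero part of the Iwahori--Weyl group does not permute them here). This yields the uniqueness of $I$. The maximal parahorics are then the $P_{\{i\}}$ with $0\le i\le m$, and among these the special maximal ones are those attached to the special vertices of the affine (local) Dynkin diagram of $G$ — of type ${}^2A_{2m}$ — which are precisely its two end vertices, namely $P_{\{0\}}$ and $P_{\{m\}}$.
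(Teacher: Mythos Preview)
Your outline is correct and follows exactly the Bruhat--Tits approach that the paper invokes: the paper gives no proof of its own here but simply cites \cite[\S 1.2.3(b)]{PR}, and what you sketch is essentially the argument found there.

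One small correction to your parenthetical remark about the even case: you have the index-$2$ phenomenon backwards. By the paper's Proposition~2.2 (and \cite[\S 1.2.3]{PR}), for even $n$ one has $P_I^\circ = P_I$ \emph{precisely when} $m\in I$; in particular the stabilizer of the $\pi$-modular lattice $\Lambda_m$ \emph{is} already parahoric, and it is the $P_I$ with $m\notin I$ that properly contain their parahoric with index $2$. This does not affect your argument for the odd case, which is the statement at hand.
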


When $n=2m$ is even, we let $I$ be a non-empty subset of $\{0,\dots,m\}$ satisfying (\ref{Iproperty}) and we consider the subgroup
\[
P_I:=\{g\in G\mid g\Lambda_i=\Lambda_i , \quad \forall i \in I\}.
\]
$P_I$ is not a parahoric subgroup in
general since it may contain elements with nontrivial Kottwitz invariant. Consider the kernel of the Kottwitz homomorphism:
\[
P_I^0:=\{g\in P_I\mid \kappa(g)=1\}
\]
where $\kappa_G : G(F_0) \twoheadrightarrow \mathbb{Z} \oplus (\mathbb{Z}/2\mathbb{Z})$ (see also \cite[\S 3]{Sm2} for more details). In this case the following statement holds (see \cite[\S 1.2.3(b)]{PR}):

\begin{Proposition}
 $P_I^0$ is a parahoric subgroup and every parahoric subgroup of $G$ is conjugate to $P_I^0$ for a
unique nonempty $I \subset \{0,\dots,m\}$ satisfying (\ref{Iproperty}). For such $I$, we have $P_I^0 = P_I$ exactly when $m \in I$. The special maximal parahoric subgroups are exactly those
conjugate to $P^0_{\{m\}} = P_{\{m\}}$.\qed   
\end{Proposition}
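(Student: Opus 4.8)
The plan is to deduce all four assertions from Bruhat--Tits theory for $G=GU(V,\phi)$, via the dictionary between periodic self-dual lattice chains and facets of the building together with the Kottwitz homomorphism; this is the route of \cite[\S 1.2]{PR}, which rests in turn on \cite{RZbook} and on foundational work of Bruhat--Tits and of Haines--Rapoport, so I only indicate the structure. Since $\phi$ is split, $G$ is quasi-split over $F_0$ and splits over the ramified quadratic extension $F$; the diagonal torus for the basis $e_1,\dots,e_n$, together with the similitude $\mathbb{G}_m$, contains a maximal $F_0$-split torus $S$, and the reduced Bruhat--Tits building $\mathcal{B}(G,F_0)$ coincides with that of $G^{\rm ad}=PGU(V,\phi)$. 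Working in the apartment attached to $S$, one computes the relative root datum and the affine roots; for residue characteristic $\neq 2$ and $n=2m$ the resulting (twisted) affine Dynkin diagram is the one in Tits' tables, with $m+1$ vertices which I label $0,1,\dots,m$ so that the standard lattices $\Lambda_0,\dots,\Lambda_m$ represent them --- here one uses $\hat\Lambda_i=\Lambda_{-i}$ and $\hat\Lambda^s_i=\Lambda_{n-i}$, so that $\Lambda_0$ is $\langle\,,\,\rangle$-self-dual and $\Lambda_m$ is $s$-self-dual, i.e.\ $\pi$-modular. Fixing a base alcove $\mathbf{a}$ with these vertices, a nonempty $I\subseteq\{0,\dots,m\}$ cuts out the facet $\mathbf{a}_I\subseteq\overline{\mathbf{a}}$ via the chain $\mathcal{L}_I$, and by the lattice-chain description of the building for classical groups the stabilizer of the chain $\mathcal{L}_I$ --- which is exactly $P_I$ --- coincides with the pointwise fixer of $\mathbf{a}_I$, i.e.\ with $\mathcal{G}_I(O_{F_0})$ for the smooth Bruhat--Tits stabilizer scheme $\mathcal{G}_I=\underline{{\rm Aut}}(\mathcal{L}_I)$.

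Next I would use the description of parahorics as connected fixers: by Haines--Rapoport, the parahoric attached to $\mathbf{a}_I$ is ${\rm Fix}_{G(F_0)}(\mathbf{a}_I)\cap\ker\kappa_G=\mathcal{G}_I^{\circ}(O_{F_0})$, where $\kappa_G$ is the Kottwitz homomorphism; hence $P_I^0=P_I\cap\ker\kappa_G$ is a parahoric by construction. The target of $\kappa_G$, as recalled just before the Proposition, is $\mathbb{Z}\oplus(\mathbb{Z}/2\mathbb{Z})$ for $G=GU_{2m}$ (and only $\mathbb{Z}$ for $GU_{2m+1}$), the $\mathbb{Z}$ being detected by $\ord(c(g))$. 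Since $g\Lambda_i=\Lambda_i$ forces $c(g)\in O_{F_0}^{\times}$, the $\mathbb{Z}$-component of $\kappa_G$ is trivial on $P_I$; thus $[P_I:P_I^0]\in\{1,2\}$, equal to the order of the image of the $\mathbb{Z}/2$-component of $\kappa_G$ on $P_I$. A computation of the reductive quotient of the special fiber $\mathcal{G}_{I,k}$ --- a product of general-linear factors indexed by the graded steps of $\mathcal{L}_I$ on which the duality involution $\iota\colon\Lambda\mapsto\hat\Lambda^{s}$ acts freely, together with one classical-group factor attached to the $\iota$-fixed part of $\mathcal{L}_I$, which is connected (a symplectic or special orthogonal group) exactly when $\Lambda_m$ lies in $\mathcal{L}_I$ and a disconnected orthogonal group otherwise (cf.\ \cite[\S 1.2]{PR}, \cite{RZbook}) --- then shows that $P_I^0=P_I$ precisely when $m\in I$. (For $GU_{2m+1}$ the target of $\kappa_G$ is $\mathbb{Z}$, hence trivial on every lattice-chain stabilizer, so there $P_I=P_I^0$ with no condition on $I$: this is the preceding Proposition.)

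For the classification, exhaustiveness is the Bruhat--Tits statement that every parahoric of $G(F_0)$ is conjugate to $P_J^0$ for some nonempty $J\subseteq\{0,\dots,m\}$ (a facet of the base alcove). Condition (\ref{Iproperty}) arises because $P_J^0=P_{J\cup\{m\}}^0$ whenever $m-1\in J$: localizing at the $\{m-1,m\}$-end of $\mathcal{L}_J$, an element of $P_J$ fixes $\Lambda_{m-1}$ and hence $\Lambda_{m+1}=\hat\Lambda^s_{m-1}$, so it acts on the $2$-dimensional $k$-quadratic space $\Lambda_{m+1}/\Lambda_{m-1}$, which is split; its two isotropic lines correspond to the two $s$-self-dual $O_F$-lattices strictly between $\Lambda_{m-1}$ and $\Lambda_{m+1}$, one being $\Lambda_m$, and the element lies in $\ker\kappa_G$ iff it acts through $\SO_2$ iff it preserves those lines iff it fixes $\Lambda_m$ --- that is, $P_J\cap\ker\kappa_G=P_J\cap{\rm Stab}(\Lambda_m)=P_{J\cup\{m\}}$. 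Hence every parahoric is conjugate to $P_I^0$ for some $I$ satisfying (\ref{Iproperty}); for uniqueness one recovers $\mathbf{a}_I$ from $P_I^0$ as the facet it fixes and checks, using Tits' tables together with the conjugacy invariants computed above (the index $[P_I:P_I^0]$ and the isomorphism type of $\mathcal{G}_{I,k}$), that distinct such $I$ give non-conjugate facets. Finally, the special maximal parahorics are the connected fixers of special vertices: reading off from Tits' tables which of $0,1,\dots,m$ is special --- only $m$, up to the $G(F_0)$-action on types (note $P_{\{0\}}$ strictly contains $P^0_{\{0\}}$, whereas $P_{\{m\}}=P^0_{\{m\}}$, so $P^0_{\{0\}}$ and $P^0_{\{m\}}$ are non-conjugate) --- one gets that they are exactly the conjugates of $P_{\{m\}}$.

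I expect the genuine work to be the last step: pinning down the precise shape of the twisted affine Dynkin diagram for the split ramified even unitary group, the action of $G(F_0)$ on its set of vertex-types, and the marking of special vertices, so as to make the uniqueness in (\ref{Iproperty}) and the special-maximal assertion exact. Everything else is soft; this combinatorial bookkeeping is what genuinely uses Tits' classification tables together with the explicit lattice calculations of \cite[\S 1.2]{PR}, and it is also the only place where the even case differs essentially from the (easier) odd case, in which $\kappa_G$ is $\mathbb{Z}$-valued and both ends of the alcove are special.
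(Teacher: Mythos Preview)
The paper does not give its own proof of this proposition: it is stated with a \qed{} and attributed wholesale to \cite[\S 1.2.3(b)]{PR}. Your proposal is a reasonable and essentially correct sketch of the Bruhat--Tits argument underlying that reference (Haines--Rapoport characterization of parahorics via $\ker\kappa_G$, the lattice-chain model of the building, the component computation of the special fiber of $\mathcal{G}_I$, and Tits' tables for the special vertices), so there is nothing to compare---you have simply unpacked what the paper cites.

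One small caution: your explanation of why condition~(\ref{Iproperty}) removes redundancy, via the action on the $2$-dimensional quadratic space $\Lambda_{m+1}/\Lambda_{m-1}$ and the identification $P_J\cap\ker\kappa_G=P_{J\cup\{m\}}$ for $m-1\in J$, is morally right but delicate in detail; in \cite[\S 1.2.3(b)]{PR} the argument is phrased instead in terms of identifying which lattice chains $\mathcal{L}_I$ actually correspond to facets of the base alcove in the apartment (the vertex labeled $m-1$ does not occur on its own), and you may find that bookkeeping cleaner than the direct orthogonal-group computation you outline.
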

\begin{Definition}\label{DefNonSpecial} {\rm
A non-empty subset $I  =  \{\ell\}$ where $ 0 \leq \ell \leq m$ is called \textit{strongly
non-special} if $ \ell \neq 0,\, m-1,\, m$ when $n = 2m$ and $ \ell \neq 0,\, m $ when $ n = 2m + 1$.}
\end{Definition}

Next, let $I$ be a strongly non-special index set and denote by $P_I$, $ \mathcal{L}_I$ the stabilizer and self-dual multichain defined above for this choice of $I$. Let $\calG_I = \underline{{\rm Aut}}(\mathcal{L}_I)$ be the (smooth) group scheme over $\mathbb{Z}_p$ with $P_I = \mathcal{G}_I(\mathbb{Z}_p)$ the subgroup of $G(\mathbb{Q}_p)$ fixing the lattice chain $\mathcal{L}_I$. Also, the group scheme $\calG_I$ has $G$ as its generic fiber. From the above we deduce that when $n$ is odd, the stabilizer $P_I$ is a parahoric subgroup but when $n$ is even, $P_I$ is not a parahoric subgroup since it contains a parahoric subgroup with index 2. The corresponding parahoric group scheme is its connected component $ P_{I}^\circ$. (See \cite[\S 1.2]{PR} for more details.) 


\section{The unitary moduli problems}\label{LocalModelVariantsSplit}
In this section, we briefly recall the definition of certain variants of local models and splitting models for ramified unitary groups that correspond to the local model triples $(G, \mu_{r,s}, P_I )$ where $I=\{\ell\}$ is a strongly non-special index (see Definition \ref{DefNonSpecial}) and $(r,s) = (n-1,1)$. Note that, given $\dim V=n> 3$ and $s=1<r$, the reflex field $E=F$. 

\subsection{Rapoport-Zink Local Models and Variants}\label{LocalModelVariants}
The \textit{naive local model} of
Rapoport-Zink \cite{RZbook}, $\rmM^{\rm naive}_I$, is the projective scheme over $\Spec O_F$ representing the functor that sends each $O_F$-scheme $S$ to the set of subsheaves 
\[
\calF_\ell \subset  \Lambda_\ell \otimes \mathcal{O}_S, \quad \calF_{n-\ell} \subset  \Lambda_{n-\ell} \otimes \mathcal{O}_S
\]
such that
\begin{enumerate}
    \item $\calF_\ell,\, \calF_{n-\ell}$ as $\mathcal{O}_S $-modules are Zariski locally on $S$ direct summands of rank $n$;
    \item The maps induced by the inclusions $  \Lambda_\ell \subset \Lambda_{n-\ell}  $ and $  \Lambda_{n-\ell} \subset \pi^{-1}\Lambda_\ell $ restrict to maps
    \[
    \calF_\ell \rightarrow \calF_{n-\ell} \rightarrow  \pi^{-1} \calF_{\ell};
    \]
    \item $\calF_{n-\ell}$ is the orthogonal complement of $\calF_\ell$ with respect to $ (\,,\,) : (\Lambda_{n-\ell}\otimes \mathcal{O}_S)\times  (\Lambda_\ell\otimes \mathcal{O}_S) \rightarrow \mathcal{O}_S  $;
    \item (Kottwitz condition) \[\text{char}_{t |  \mathcal{F}_\ell } (X)= (X + \pi)^r(X - \pi)^s \] and the analogous statement holds true for $\calF_{n-\ell}$.
    \end{enumerate}
    
The \textit{wedge local model} ${\rm M}^{\wedge}_I$ is the closed subscheme of ${\rm M}^{\rm naive}_I$ defined by the additional condition that 
    \begin{enumerate}[resume]
        \item (Wedge condition) The exterior powers
\[
 \wedge^{r+1} (t-\pi | \mathcal{F}_\ell) = (0) \quad \text{and} \quad  \wedge^{s+1} (t+\pi | \mathcal{F}_\ell) = (0) 
\]
vanish and the same holds true with $\calF_\ell$ replaced by $\calF_{n-\ell}$.
    \end{enumerate}
The inclusion ${\rm M}^{\wedge}_I \subset {\rm M}^{\rm naive}_I $ is an isomorphism on generic fibers, which both identify naturally with the (smooth) Grassmannian $Gr(s,n)\otimes E$ of dimension $rs$ (see \cite[\S 1.5.3]{PR}). Also, the schemes ${\rm M}^{\wedge}_I$, ${\rm M}^{\rm naive}_I$ support an action of $\mathcal{G}_I$. 

\begin{Remarks}{\rm
In \cite[\S 1.5.1]{PR}, the authors define the naive local model $\rmM^{\rm naive}_I$ that sends each $O_F$-scheme $S$ to the families of $O_F\otimes \calO_S$-modules $(\calF_i\subset \Lambda_i\otimes\calO_S)_{i \in n\mathbb{Z}\pm I}$ that satisfy the conditions (a)-(d) of loc. cit. By periodicity, one can restrict the moduli functor ${\rm M}^{\rm naive}_I$ into the sub-lattices chain $ \Lambda_\ell \subset \Lambda_{n-\ell} \subset \pi^{-1} \Lambda_\ell $ without losing any information. Also, we want to mention that (c) of loc. cit. is equivalent to (3) above.
 }\end{Remarks}

There is a further variant: let $\Mloc_I$ be the scheme theoretic closure of the generic fiber $ {\rm M}^{\rm naive}_I \otimes_{O_F} F$ in ${\rm M}^{\rm naive}_I$. The scheme $\Mloc_I$ is called the \textit{local model}. We have closed immersions of projective schemes
\[
\Mloc_I \subset {\rm M}^{\wedge}_I\subset {\rm M}^{\rm naive}_I
\]
which are equalities on generic fibers (see \cite[\S 1.5]{PR} for more details). Moreover, $ {\rm M}^{\wedge}_I$ is topologically flat
in the strongly non-special case, or in other words, the underlying topological spaces of ${\rm M}^{\wedge}_I$ and ${\rm M}^{\rm loc}_I$ coincide. When $n$ is odd, this follows directly from \cite{Sm1} and when $n$ is even, this follows from \cite[Proposition 7.4.7]{Sm2}.


Moreover, from \cite[Theorem 0.1]{PR}, which in turn uses Zhu’s proof of the Pappas-Rapoport coherence conjecture \cite{Zhu}, we deduce that the special fiber of ${\rm M}^{\rm loc}_I$ is reduced and admits a stratification by Schubert cells indexed by the ``$\mu$-admissible set ” (see also \S \ref{Affine Charts around} for more details). Here, we want to also mention that Zhu \cite{Zhu} uses the powerful technique of Frobenius splitting to prove that the special fiber of the local model is reduced.
\begin{Proposition}
The special fiber of the local model $\Mloc_I$ is reduced, and each irreducible component is normal, Cohen-Macaulay, and Frobenius-split.\qed 
\end{Proposition}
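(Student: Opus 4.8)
The plan is to realize the geometric special fiber $\Mloc_I\otimes_{O_F}k$ as a union of affine Schubert varieties in a twisted affine flag variety and then import their known geometry. Since $p$ is odd, the ramified unitary group splits over the tame extension $F/F_0$, so one has at one's disposal the group-theoretic local model $\mathbb{M}_{\{\mu_{r,s}\}}$ attached to $(\mathcal{G},\{\mu_{r,s}\})$ inside a Beilinson--Drinfeld affine Grassmannian. The first step is to check that the scheme-theoretic closure $\Mloc_I$ agrees with $\mathbb{M}_{\{\mu_{r,s}\}}$: both are $O_F$-flat with generic fiber $\Gr(s,n)\otimes F$, and the topological flatness of $\mathrm{M}^\wedge_I$ recalled above (from \cite{Sm1} for $n$ odd and \cite[Prop.~7.4.7]{Sm2} for $n$ even) pins down the underlying set of the special fiber to be the union $\bigcup_{w\in\mathrm{Adm}(\{\mu_{r,s}\})}S_w$ of the affine Schubert varieties over the $\{\mu_{r,s}\}$-admissible set in the affine flag variety of the parahoric $P_I^\circ$ (which shares its affine flag variety with $\mathcal{G}$).

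For the statement about the irreducible components: each component of $\Mloc_I\otimes k$ is an affine Schubert variety $S_w$ for $w$ maximal in $\mathrm{Adm}(\{\mu_{r,s}\})$, and affine Schubert varieties in twisted affine flag varieties are normal, Cohen--Macaulay and Frobenius--split. Here I would simply invoke the standard package: Frobenius-splitting of the twisted affine flag variety compatibly with all Schubert subvarieties (Faltings, Mathieu, Littelmann, and in the twisted setting \cite{PR2} and Zhu), which yields Frobenius-splitting of each $S_w$ together with reducedness and normality, and then Cohen--Macaulayness via the Demazure/Bott--Samelson resolutions. This part is essentially off the shelf once the embedding into the affine flag variety is in place.

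The delicate point is reducedness of the \emph{whole} special fiber $\Mloc_I\otimes k$ as a scheme, i.e.\ that $(\Mloc_I\otimes k)^{\mathrm{red}}\hookrightarrow\Mloc_I\otimes k$ is an isomorphism. The plan is the usual Hilbert-polynomial comparison with a $\mathcal{G}$-equivariant very ample line bundle $\mathcal{L}$ on $\Mloc_I$: flatness over $O_F$ gives $\chi(\Mloc_I\otimes k,\mathcal{L}^{\otimes N})=\chi(\Gr(s,n)\otimes F,\mathcal{L}^{\otimes N})$ for all $N$; for $N\gg 0$ the right side is $h^0(\Gr(s,n),\mathcal{L}^{\otimes N})$, and Frobenius-splitting makes the left side $h^0(\Mloc_I\otimes k,\mathcal{L}^{\otimes N})$. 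It then remains to know $h^0((\Mloc_I\otimes k)^{\mathrm{red}},\mathcal{L}^{\otimes N})=h^0(\Gr(s,n),\mathcal{L}^{\otimes N})$ for $N\gg 0$, which is a case of the coherence conjecture of Pappas--Rapoport proved by Zhu; combining the three identities forces the nilradical of $\mathcal{O}_{\Mloc_I\otimes k}$ to have no sections after any ample twist, hence to vanish. I expect the main obstacle to be precisely this last input: setting up the coherence conjecture with the correct normalization of line bundles on the admissible union, on the central-fiber partial flag variety, and on the generic fiber, and handling the disconnectedness of $\mathcal{G}=\underline{\mathrm{Aut}}(\mathcal{L}_I)$ for even $n$ (which one sidesteps by working throughout with $P_I^\circ$, whose affine flag variety, Schubert varieties and Frobenius-splitting coincide with those of $\mathcal{G}$). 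An alternative, more hands-on route --- the one taken for the splitting model in this paper --- is to compute explicit affine charts resolving $\Mloc_I$ and to read off reducedness and the structure of the components directly, which avoids the coherence conjecture at the cost of lengthier bookkeeping.
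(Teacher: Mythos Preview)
Your proposal is correct and matches the argument behind the cited result: the paper does not supply its own proof here but simply quotes \cite[Theorem~0.1]{PR}, whose proof proceeds exactly as you describe---embedding the special fiber into the twisted affine flag variety as the $\{\mu_{r,s}\}$-admissible union of Schubert varieties (with topological flatness from \cite{Sm1,Sm2} as input), invoking the standard normality/Cohen--Macaulay/Frobenius-split package for Schubert varieties, and deducing reducedness of the whole special fiber via the coherence conjecture, now a theorem of Zhu \cite{Zhu}. Your remark about passing between $\mathcal{G}$ and its connected component $P_I^\circ$ is also on point and is handled in \cite{PR} in the same way.
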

Let us add that, by \cite{HPR}, none of the models $\Mloc_I$ are smooth or semi-stable. Lastly, we want to mention that very recently Luo \cite{Luo} gave a moduli description of $\Mloc_I  $ by adding in ${\rm M}^{\rm naive}_I$ the so-called ``spin conditions"; this will play no role in this paper. 

\subsection{Splitting Models}\label{SplModelsSection}
Next, we consider the moduli scheme ${\rm M}^{\rm spl}_I$ over $O_F$, the \textit{splitting model} as in \cite[\S 5]{Richarz} (see also \cite[Definition 14.1]{PR2}), whose points for an $O_F $-scheme $S$ are the set of triples $(\calF_\ell,\calF_{n-\ell}, \calG_{n-\ell})$ with $ (\calF_\ell,\calF_{n-\ell}) \in {\rm M}^{\wedge}_I(S)$ and 
\[
\calG_{n-\ell} \subset \calF_{n-\ell}
\]
which is Zariski locally $\mathcal{O}_S$-direct summand of rank 1, subject to the following conditions: 
\begin{equation}\label{split.cond.}
(t+\pi) \calF_{n-\ell} \subset \calG_{n-\ell}, \quad  (t-\pi) \calG_{n-\ell} = (0).
\end{equation}
The functor is represented by a projective $O_F $-scheme $ {\rm M}^{\rm spl}_I$. The scheme ${\rm M}^{\rm spl}_I$ supports an action of $\calG_I$ and there is a $\calG_I$-equivariant projective morphism 
\[
 \tau : {\rm M}^{\rm spl}_I \rightarrow {\rm M}^{\wedge}_I \subset {\rm M}^{\rm naive}_I
\]
which is given by $(\calF_\ell,\calF_{n-\ell},\calG_{n-\ell}) \mapsto (\calF_\ell,\calF_{n-\ell})$ on $S$-valued points. (Indeed, we can easily see, as in \cite[Definition 4.1]{Kr}, that $\tau$ is well defined.) The morphism $\tau  :{\rm M}^{\rm spl}_I \rightarrow {\rm M}^{\wedge}_I$ induces an isomorphism on the generic fibers (see \cite[Remark 4.2]{Kr}). 

\begin{Remark}\label{rk 33}{\rm
We want to highlight that in our definition above 
there is only one subspace being added in a dual pair $(\Lambda_i, \hat{\Lambda}^s_i)=(\Lambda_i, \Lambda_{n-i})$, instead of two subspaces as expected from \cite[Definition 14.1]{PR2}. In fact, this variation is necessary because, as one can check, adding two subspaces results in a non-flat model. This variant first appeared in \cite[\S 5.2]{Richarz} where Richarz considered the almost $\pi$-modular case, i.e., when $n=2m+1$ and $I=\{m\}$, and showed that ${\rm M}^{\rm spl}_I$ has semi-stable reduction. 

}\end{Remark}

\begin{Remark}\label{Kott.Wedge}{\rm
In the moduli functor of $ {\rm M}^{\rm spl}_I$ we do not need to impose the Kottwitz condition or the wedge condition. These conditions are implied by condition (\ref{split.cond.}) combined with $(3)$; see \S \ref{EqtsAffChrt} for more details. 
}\end{Remark}

Lastly, in \S \ref{FlatReducSec}, we will prove that ${\rm M}^{\rm spl}_I $ is flat and combining all the above we have that ${\rm M}^{\rm spl}_I \xrightarrow{\tau} {\rm M}^{\wedge}_I$ factors through $\Mloc_I \subset {\rm M}^{\wedge}_I$ because of flatness.

\section{Affine Charts}\label{Affine charts}
\subsection{Affine Charts around $\tau^{-1}(*)$}\label{Affine Charts around}
In what follows, we assume that $I=\{\ell\}$ is a strongly non-special index and $(r,s) = (n-1,1)$.

There is a natural embedding of the special fiber ${\rm M}^{\rm loc}_I\otimes k$ into a partial affine flag variety $\calF l_I$ associated to the unitary group $GU(r,s)$ (see \cite[\S 3]{PR}. Let $\calA_I(\mu)\subset \calF l_I$ be the union of affine Schubert varieties over the $\mu$-admissible set, i.e.,
\[
\calA(\mu)=\cup_{\omega\in {\rm Adm}(\mu)}S_\omega.
\]
By the Pappas-Rapoport coherence conjecture 
we deduce that $\calA(\mu)$ is isomorphic to the special fiber ${\rm M}^{\rm loc}_I\otimes k$ under the natural embedding (see \cite[Prop. 3.1]{PR}, \cite{Zhu}). 
There is a unique closed Schubert cell in ${\rm M}^{\rm loc}_I\otimes k$, which we call the ``worst point", denoted by $*\in {\rm M}^{\rm loc}_I\subset{\rm M}^{\wedge}_I$. In our case, the worst point can be easily represented by the standard lattices $\Lambda_{i}$: For $I=\{\ell\}$, a strongly non-special index, the worst point $*=(t\Lambda_{\ell},t\Lambda_{n-\ell})\in{\rm M}^{\rm loc}_I\otimes k$ (see \cite[Lemma 3.1.1]{Luo} for more details).


To simplify the computation in the following, we reorder the basis of $\Lambda_{\ell}$ (resp. $\Lambda_{n-\ell}$):
\begin{equation}\label{eq 411}
	\Lambda_{\ell}={\rm span}_{O_{F_0}}\{\begin{array}{l}
	e_{n-\ell+1},\cdots, e_n, \pi^{-1}e_1,\cdots, \pi^{-1}e_\ell, e_{\ell+1},\cdots, e_{n-\ell},\\
	\pi e_{n-\ell+1},\cdots, \pi e_n, e_1,\cdots, e_\ell, \pi e_{\ell+1},\cdots, \pi e_{n-\ell}.
\end{array}\},
\end{equation}
\begin{equation*}
\Lambda_{n-\ell}={\rm span}_{O_{F_0}}\{\begin{array}{l}
	e_{n-\ell+1},\cdots, e_n, \pi^{-1}e_1,\cdots, \pi^{-1}e_\ell, \pi^{-1}e_{\ell+1},\cdots, \pi^{-1}e_{n-\ell},\\
	\pi e_{n-\ell+1},\cdots, \pi e_n, e_1,\cdots, e_\ell,  e_{\ell+1},\cdots,  e_{n-\ell}.
\end{array}\}.	
\end{equation*}
By using this new basis order, we have the inverse image 
\[
\tau^{-1}(*)=\{ (t\Lambda_\ell, t\Lambda_{n-\ell},\calG_{n-\ell})\mid {\rm rk}(\calG_{n-\ell})=1, t\calG_{n-\ell}=0\}.
\]
Note that the relation $t\calG_{n-\ell}=0$ is equivalent to $\calG_{n-\ell}\subset t\Lambda_{n-\ell}\otimes k$. Thus, $\tau^{-1}(*)$ contains all $1$-dimensional subspaces in $t\Lambda_{n-\ell}\otimes k$ and is isomorphic to $\PP_k^{n-1}$. It is contained in a union of affine charts $\calU_{i_0}$, i.e.,
\[
\tau^{-1}(*)\subset \cup_{i_0=1}^n \calU_{i_0},
\]
 where $\calU_{i_0}$ is an affine neighborhood around the point $(t\Lambda_\ell, t\Lambda_{n-\ell},k\{\pi e_{n-\ell+i_0}\})$ for $1\leq i_0\leq \ell$ and the point $(t\Lambda_\ell, t\Lambda_{n-\ell},k\{e_{-\ell+i_0}\})$ for $\ell+1\leq i_0\leq n$. In order to find the explicit equations that describe $\U_{i_0}$, we use similar arguments as in the proof of \cite[\S 3]{Zac1} (see also \cite[\S 4]{ZacZhao}). In our case, we ask $(\calF_\ell, \calF_{n-\ell}, \calG_{n-\ell})\in \U_{i_0}$ to satisfy the following conditions:
\[
 \begin{array}{l}
 (1).~ \calF_{\ell}\subset \calF_{n-\ell}\subset t^{-1}\calF_{\ell} ~\text{under the inclusion maps}~ \Lambda_{\ell} \subset \Lambda_{n-\ell}\subset t^{-1}\Lambda_{\ell}.\\
 (2).~  (\calF_{n-\ell},\calF_{\ell})=0 ~\text{with respect to}~ (\ ,\ ): \Lambda_{n-\ell} \times \Lambda_{\ell} \rightarrow \mathcal{O}_S.\\  
 (3).~ \calG_{n-2\ell}\subset \calF_{n-2\ell},\quad (t+\pi) \calF_{n-\ell} \subset \calG_{n-\ell}, \quad  (t-\pi) \calG_{n-\ell} = (0).\\
 (4) \text{(Kottwitz condition)}.~ \text{char}_{t\mid \mathcal{F}_{\kappa} } (T)= (T + \pi)^r(T - \pi)^s \text{ for } \kappa=\ell \text{ or } n-\ell.\\
 (5) \text{(Wedge condition)}.~  \wedge^{r+1} (t-\pi | \mathcal{F}_\kappa) = (0),\, \,  \wedge^{s+1} (t+\pi | \mathcal{F}_\kappa) = (0)  \text{ for } \kappa=\ell, n-\ell.
 \end{array}
 \]
 
 \subsection{Equations of the affine charts $\U_{i_0}$}\label{EqtsAffChrt} A point $(\calF_\ell, \calF_{n-\ell}, \calG_{n-\ell})$ in the affine chart $\U_{i_0}$ can be presented by the following matrices with respect to the basis order (\ref{eq 411}):
 \[  
\calF_\ell = \left[\ 
\begin{matrix}
X\\ \hline
I_n  
\end{matrix}\ \right], \quad 
\mathcal{F}_{n-\ell}=  \left[\ 
\begin{matrix}
Y\\ \hline
I_n  
\end{matrix}\ \right],\quad
\calG_{n-\ell}= \left[\ 
\begin{matrix}
G_1\\ \hline
G_2  
\end{matrix}\ \right],
\]
where matrices $X, Y$ are of sizes $n\times n$ and $G_1, G_2$ are of sizes $n\times 1$. From condition (1) we have the inclusion maps $A_\ell: \Lambda_{\ell}\rightarrow \Lambda_{n-\ell},\,\, A_{n-\ell}: \Lambda_{n-\ell}\rightarrow t^{-1}\Lambda_{\ell}$ which are represented by the following matrices:
\begin{equation}\label{eq 421}
A_\ell=\left[\ 
\begin{matrix}
I_{2\ell} &&&\\ 
&&& \pi_0 I_{n-2\ell}\\
&& I_{2\ell}&\\
&I_{n-2\ell}&& 
\end{matrix}\ \right],\quad	
A_{n-\ell}=\left[\ 
\begin{matrix}
&& \pi_0 I_{2\ell} &\\ 
& I_{n-2\ell}&&\\
I_{2\ell}&&&\\
&&&I_{n-2\ell} 
\end{matrix}\ \right].
\end{equation}
The symmetric pairing $(\ ,\ ): \Lambda_{n-\ell} \times \Lambda_{\ell} \rightarrow \mathcal{O}_S$ is given by
\begin{equation}
(\ ,\ )=\left[\ 
\begin{matrix}
&&J_{2\ell} &\\ 
&&& -H_{n-2\ell}\\
-J_{2\ell}&&&\\
& H_{n-2\ell}&& 
\end{matrix}\ \right],	
\end{equation}
where $H_\ell$ is the unit anti-diagonal matrix of size $\ell$, and $J_{2\ell}=\left[\ 
\begin{matrix}
&H_{\ell} \\ 
-H_{\ell}& 
\end{matrix}\ \right]$. Observe that $ J^2_{2\ell} = -I_{2\ell}$. In the rest of this section, we will omit the lower indices of $H_{\ell}$ and $J_{2\ell}$ for simplicity. 
To find all the equations of $\U_{i_0}$ we claim that it is enough to check the conditions (1)-(3). Observe that condition (2) is equivalent to say that $\calF_{n-\ell}$ is the orthogonal complement of $\calF_{\ell}$. By using the above matrices, it translates to:
\begin{equation}\label{eq 423}
Y=\left[\ 
\begin{matrix}
-J& \\ 
&H 
\end{matrix}\ \right]\cdot X^t\cdot
\left[\ 
\begin{matrix}
J& \\ 
&H 
\end{matrix}\ \right].	
\end{equation}
Condition (3) implies that $\calF_{n-l}$ satisfies the Kottwitz condition and the Wedge condition, i.e., 
\[
\begin{array}{ll}
\det(T\cdot I_n-Y)=(T+\pi)^r(T-\pi)^s,\\
\wedge^{r+1}(Y-\pi I_n)=0,\quad \wedge^{s+1}(Y+\pi I_n)=0.	
\end{array}
\]
Note that $\left[\ 
\begin{matrix}
-J& \\ 
&H 
\end{matrix}\ \right]$ is an invertible matrix, with $\left[\ 
\begin{matrix}
-J& \\ 
&H 
\end{matrix}\ \right]^{-1}=\left[\ 
\begin{matrix}
J& \\ 
&H 
\end{matrix}\ \right]$. From equation (\ref{eq 423}), the characteristic polynomial of $X$ is the same as the characteristic polynomial of $Y$. Similarly, from (\ref{eq 423}) and the invertibility of the above matrix we deduce 
\[
\wedge^{r+1}(X-\pi I_n)=0,\quad \wedge^{s+1}(X+\pi I_n)=0.
\]

We break up the matrices $X, Y$ into blocks as follows:
\begin{equation}\label{eq 424}
X=\left[\ \begin{matrix}
X_1&X_2 \\ 
X_3&X_4 
\end{matrix}\ \right],\quad
Y=\left[\ \begin{matrix}
Y_1&Y_2 \\ 
Y_3&Y_4 
\end{matrix}\ \right],	
\end{equation}
where $X_1$ (resp. $Y_1$) are of sizes $2\ell\times 2\ell$, and $X_4$ (resp. $Y_4$) are of sizes $(n-2\ell)\times (n-2\ell)$. 

\begin{Lemma}\label{lm 41}
Let $H$ be the unit anti-diagonal matrix, and $J=\left[\ 
\begin{matrix}
&H \\ 
-H& 
\end{matrix}\ \right]$.
Let $\calU_{i_0}$ be an affine neighborhood around the point $(t\Lambda_\ell, t\Lambda_{n-\ell},k\{\pi e_{n-\ell+i_0}\})$ for $1\leq i_0\leq \ell$ and the point $(t\Lambda_\ell, t\Lambda_{n-\ell},k\{e_{-\ell+i_0}\})$ for $\ell+1\leq i_0\leq n$.
The affine chart $\U_{i_0}$ is isomorphic to $\Spec O_F[X,Y,V,Z]/I$, where $V, Z$ are the matrices of sizes $n\times 1$, and $I$ is the ideal generated by the following relations:

(a) $Y-VZ^t+\pi I_n$, $Z^tV-2\pi$;

(b) $Y_1+JX_1^tJ$, $Y_2+JX_3^tH$, $Y_3-HX_2^tJ$, $Y_4-HX_4^tH$;

(c) $X_1-(Y_1+Y_2X_3)$, $X_2-Y_2X_4$, $Y_4X_4-\pi I_{n-2\ell}$;

(d) $X_1Y_1-\pi_0 I_{2\ell}$, $Y_3-X_3Y_1$, $Y_4-(X_4+X_3Y_2)$.
\end{Lemma}
	
\begin{proof} From the above discussion, it is enough to check the conditions (1)-(3). 
Note that 
\[
M_t=\left[\ \begin{matrix}
&\pi_0 I_n \\ 
I_n&
\end{matrix}\ \right]
\]
is the matrix giving multiplication by $t$. 
Condition (3) is equivalent to:
\[
\left[\ 
\begin{matrix}
G_1\\ \hline
G_2  
\end{matrix}\ \right]=\left[\ 
\begin{matrix}
Y\\ \hline
I_n  
\end{matrix}\ \right]\cdot G_2;\quad 
\exists Z_{n\times 1}: \left[\ 
\begin{matrix}
\pi Y+\pi_0 I_n\\ \hline
Y+\pi I_n 
\end{matrix}\ \right]=\left[\ 
\begin{matrix}
G_1\\ \hline
G_2
\end{matrix}\ \right]\cdot Z^t; \quad G_1=\pi G_2.
\]	
Set $V=G_2$. Since $G_1=\pi V$ and $Y=VZ^t-\pi I_n$, we get $VZ^tV=2\pi V$ from $G_1=YG_2$. Recall that $\calG_{n-2\ell}$ has rank one. 
From $G_1=\pi G_2$, we deduce that there should be a unit element in 
$G_2=V$. Thus, $VZ^tV=2\pi V$ translates to $Z^tV-2\pi I_n=0$, and therefore we get the relations in (a).

Condition (2) translates to equation (\ref{eq 423}). By using the block matrices from (\ref{eq 424}), we obtain the relations in (b).

Lastly, condition (1) is equivalent to $A_{\ell}\calF_{\ell}\subset\calF_{n-\ell}, \,\, A_{n-\ell}\calF_{n-\ell}\subset t^{-1}\calF_{\ell}$. By using the equation (\ref{eq 421}), we have:
\[
\begin{array}{llll}
X_1=Y_1+Y_2X_3,
& X_2=Y_2X_4,
& Y_3+Y_4X_3=0,
& Y_4X_4=\pi I_{n-2\ell}	,\\
X_1Y_1=\pi_0 I_{2\ell},
&X_1Y_2+X_2=0,
&Y_3=X_3Y_1,
&Y_4=X_4+X_3Y_2.
\end{array}
\]
Using the relations in (b), it is easy to see that equation $Y_3+Y_4X_3=0$ is equivalent to $X_2=Y_2X_4$, and $X_1Y_2+X_2=0$ is equivalent to $Y_3=X_3Y_1$. Thus, we get the relations in (c) and (d). 
\end{proof}

Our next goal is to give a simplification of the ideal $I$ from Lemma \ref{lm 41}. From the proof of the above Lemma, the matrices $X, Y, G_1, G_2$ can be expressed in terms of $V, Z$. Conversely, $V=G_2$, and $Y+\pi I_n=G_2Z^t$ implies that $Z^t$ can be expressed in terms of $Y$, since there exists a unit element in $G_2$.

For later use, we also break up the matrices $V, Z$ into blocks as follows:
\begin{equation}\label{eq 425}
V=\left[\ 
\begin{matrix}
V_1\\ \hline
V_2  
\end{matrix}\ \right],\quad
Z=\left[\ 
\begin{matrix}
Z_1\\ \hline
Z_2  
\end{matrix}\ \right]	
\end{equation}
where $V_1=\left[ v_1\ \cdots\ v_{2\ell} \right]^t$ (resp. $Z_1=\left[ z_1\ \cdots\ z_{2\ell} \right]^t$) and $V_2=\left[ v_{2\ell+1}\ \cdots\ v_{n} \right]^t$ (resp. $Z_2=\left[ z_{2\ell+1}\ \cdots\ z_{n} \right]^t$).

\begin{Proposition}\label{prop 42}
(1) For $1\leq i_0\leq 2\ell$, the affine chart $\U_{i_0}$ is isomorphic to 
\[
\Spec \frac{O_F[V_1, V_2, Z_2]}{(v_{i_0}-1, \,\,\wedge^2\left[\ V_2\mid HZ_2\ \right],\,\, Z_2^t\cdot V_2-2\pi)}.
\]
(2) For $2\ell+1\leq i_0\leq n$, the affine chart $\U_{i_0}$ is isomorphic to 
\[
\Spec \frac{O_F[V_1, V_2, u]}{(v_{i_0}-1,\,\, u\cdot (V_2^tHV_2)-2\pi)}.
\]
\end{Proposition}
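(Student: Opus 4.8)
The plan is to start from the presentation of $\U_{i_0}$ given in Lemma \ref{lm 41} and successively eliminate variables, exploiting the fact that (as observed right after the proof of Lemma \ref{lm 41}) all of $X$, $Y$, $G_1$, $G_2$ are determined by $V$ and $Z$, while $Z^t$ is in turn recovered from $Y$ via $Y+\pi I_n = VZ^t$ together with the presence of a unit entry in $V$. Concretely, I would first use relation (a), namely $Y = VZ^t-\pi I_n$ and $Z^tV = 2\pi$, to remove the variables $X$ and $Y$ entirely: the block relations (b), (c), (d) of Lemma \ref{lm 41} all become polynomial identities in the entries of $V$ and $Z$, so the coordinate ring is a quotient of $O_F[V,Z]$. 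The remaining task is to show that, after imposing the normalization $v_{i_0}=1$ coming from the choice of affine chart, relations (b)--(d) collapse to exactly the stated ideal — in case (1) the equations $\wedge^2[\,V_2\mid HZ_2\,]$ and $Z_2^tV_2-2\pi$ together with eliminating $Z_1$, and in case (2) a single equation $u\cdot(V_2^tHV_2)-2\pi$ after introducing the auxiliary scalar $u$.

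The key computational step is to feed the rank-one factorization $Y+\pi I_n = VZ^t$ through the block decomposition (\ref{eq 424})--(\ref{eq 425}) and the symmetry constraint (b). Writing $Y_1 = V_1Z_1^t-\pi I_{2\ell}$, $Y_2 = V_1Z_2^t$, $Y_3 = V_2Z_1^t$, $Y_4 = V_2Z_2^t-\pi I_{n-2\ell}$, the relation $Y_4 = HX_4^tH$ from (b) combined with $Y_4X_4 = \pi I_{n-2\ell}$ from (c) forces $X_4 = H Y_4^t H$, hence $Y_4 H Y_4^t H = \pi I_{n-2\ell}$; since $Y_4$ has rank $\le 1$ plus a scalar, one extracts from this that $V_2Z_2^t$ is symmetric with respect to $H$, i.e. $HV_2$ and $Z_2$ (equivalently $V_2$ and $HZ_2$) are proportional — this is precisely where $\wedge^2[\,V_2\mid HZ_2\,]=0$ enters in range (1). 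In range (2), the normalizing unit $v_{i_0}$ lies in $V_2$, so the proportionality lets one write $Z_2 = u\,HV_2$ for a scalar $u$ (solving $z_{i_0} = u\cdot (HV_2)_{i_0}$ using $v_{i_0}=1$), and then $Z_2^tV_2 = u\,V_2^tHV_2$, turning $Z^tV-2\pi = Z_2^tV_2-2\pi$ (the $V_1,Z_1$ part vanishes after elimination) into $u(V_2^tHV_2)-2\pi$. Throughout, relations (c) and (d) such as $X_1Y_1 = \pi_0 I_{2\ell}$, $X_1 = Y_1+Y_2X_3$, $Y_3 = X_3Y_1$ must be checked to be automatically satisfied — they should reduce to identities of the form "$(V_1Z_1^t-\pi I)(\text{its } J\text{-adjoint}) = \pi_0 I$", which hold formally once $Z^tV = 2\pi$ is imposed, so they contribute no new generators; and $Z_1$ itself is eliminated because the unit entry of $V$ lets one solve $Y = VZ^t-\pi I_n$ for every entry of $Z$, while $Z_1$ then appears only through already-satisfied relations.

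The main obstacle I anticipate is the bookkeeping in the last part of the previous paragraph: verifying that the copious block relations (b), (c), (d) of Lemma \ref{lm 41} — there are on the order of a dozen matrix equations of four different block sizes — genuinely become redundant once one substitutes the rank-one ansatz and the single scalar relation $Z^tV=2\pi$, with the only surviving constraints being the $\wedge^2$ (resp. $V_2^tHV_2$) equation. In particular one must be careful that the "unit entry" argument is applied in the correct block: for $1\le i_0\le 2\ell$ the unit sits in $V_1$, which is what allows eliminating $Z_1$ but leaves $V_2,Z_2$ genuinely free subject to the rank and pairing conditions; for $2\ell+1\le i_0\le n$ the unit sits in $V_2$, which additionally trivializes the rank condition into the substitution $Z_2 = uHV_2$. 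I would organize this as two short lemmas (one per range) and in each reduce the ideal $I$ step by step, each step either "solve and substitute" or "this relation now reads $0=0$", until only the advertised generators remain; then I would note the resulting affine scheme is visibly of the claimed form, completing the proof.
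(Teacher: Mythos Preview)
Your overall plan---start from Lemma~\ref{lm 41}, substitute $Y = VZ^t - \pi I_n$ and the (b)-expressions for $X$, then show that (c)--(d) collapse to the stated generators---matches the paper's. However, two of your key steps do not go through as written.

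First, the derivation of $\wedge^2[\,V_2\mid HZ_2\,]=0$. You propose to combine $Y_4 = HX_4^tH$ with $Y_4X_4 = \pi_0 I$ to get $Y_4HY_4^tH = \pi_0 I$ and then ``extract'' the $H$-symmetry of $V_2Z_2^t$. But writing $P=V_2Z_2^t$ and $Q=HZ_2V_2^tH=HP^tH$, that equation only yields $PQ = \pi(P+Q)$; over the special fiber this says nothing about $P-Q$. For instance, with $n-2\ell=2$, $V_2=(1,0)^t$, $Z_2=(1,0)^t$ one has $P\ne Q$ yet $PQ=0$. The paper instead uses the \emph{additive} relation $Y_4 = X_4 + X_3Y_2$ from (d): substituting (\ref{eq 426})--(\ref{eq 427}) gives $V_2Z_2^t = HZ_2V_2^tH - HZ_2(V_1^tJV_1)Z_2^t$, and the cross term vanishes because $V_1^tJV_1=0$ (skew-symmetry of $J$), yielding $P=Q$ directly.

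Second, your elimination of $Z_1$ is circular: you cannot ``solve $Y=VZ^t-\pi I$ for $Z$'' since $Y$ is \emph{defined} by that equation, and you never say which of the relations (c)--(d) actually pins $Z_1$ down. The paper derives the explicit formula $Z_1 = -\tfrac{a}{2}JV_1$ (with $a=Z_2^tHZ_2$): in Case~1 from $X_1 = Y_1+Y_2X_3$ by reading off the $i_0$-th row using $v_{i_0}=1$, and in Case~2 from $X_2=Y_2X_4$. That formula, again via $V_1^tJV_1=0$, gives $Z_1^tV_1 = 0$, which is why $Z^tV=2\pi$ reduces to $Z_2^tV_2=2\pi$; it is also what one substitutes back to verify that the remaining block relations (e.g.\ $X_1Y_1=\pi_0 I_{2\ell}$) become identities. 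Without this explicit formula your final ``this relation now reads $0=0$'' checks cannot be carried out.
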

\begin{proof}
By Lemma \ref{lm 41} (a) and (\ref{eq 425}), the equation $Y=VZ^t-\pi I_n$ is equivalent to 
\begin{equation}\label{eq 426}
\begin{array}{ll}
Y_1=V_1Z_1^t-\pi I_{2\ell}, & Y_2=V_1Z_2^t,\\
Y_3=V_2Z_1^t, & Y_4=V_2Z_2^t-\pi I_{n-2\ell}.
\end{array}
\end{equation}	
Similarly, $X$ can also be expressed in terms of $V, Z$ by Lemma \ref{lm 41} (b):
\begin{equation}\label{eq 427}
\begin{array}{ll}
X_1=-JZ_1V_1^tJ-\pi I_{2\ell}, & X_2=JZ_1V_2^tH,\\
X_3=-HZ_2V_1^tJ, & X_4=HZ_2V_2^tH-\pi I_{n-2\ell}.
\end{array}
\end{equation}

{\it Case 1:} Assume that $1\leq i_0\leq 2\ell$. Consider the equation $X_1=Y_1+Y_2X_3$ in Lemma \ref{lm 41} (c). From the above, it translates to 
$
JZ_1V_1^tJ+V_1Z_1^t=V_1(Z_2^tHZ_2)V_1^tJ.	
$
Set $a=Z_2^tHZ_2$. The left-hand side of the above relation is $(\pm v_{2\ell+1-j}z_{2\ell+1-i}+v_iz_j)_{i,j}$, where $\pm$ depends on the position of $i, j$ as follows:
\begin{equation}
\pm=\{\begin{array}{ll}
- ~\quad \text{for}~ 1\leq i,j \leq \ell, ~\text{or}~ \ell+1\leq i,j \leq 2\ell,\\
+ ~\quad \text{otherwise.}~ 	
\end{array}	
\end{equation}
Similarly, the right-hand side equals to $(\pm a\cdot v_iv_{2\ell+1-j})_{i,j}$, where $\pm$ is negative if $1\leq j\leq \ell$, and is positive if $\ell+1\leq j\leq 2\ell$. Next, by using this observation, we consider the position $(i,j)=(i_0, 2\ell+1-i_0)$.

When $1\leq i_0\leq \ell$ and $(i,j)=(i_0, 2\ell+1-i_0)$ the above relation gives $2z_{2\ell+1-i_0}=a$. By setting $z_{2\ell+1-i_0}=\frac a2$ and considering the $i_0$-row, we obtain that $Z_1=-\frac{a}{2}JV_1$. When $\ell+1\leq i_0\leq 2\ell$, we get $2z_{2\ell+1-i_0}=-a$. By setting $z_{2\ell+1-i_0}=-\frac a2$, a similar direct calculation shows that $Z_1=-\frac{a}{2}JV_1$. To sum up, we obtain: \vspace{-0.1cm}
\begin{equation}\label{eq 429}
\left[\ z_1 \cdots z_{\ell} ~ z_{\ell+1}\cdots z_{2\ell} \ \right]^t=\frac{a}{2}\left[\ -v_{2\ell} \cdots -v_{\ell+1} ~ v_{\ell}\cdots v_{1}\ \right]^t.	
\end{equation} 
All other positions in $X_1=Y_1+Y_2X_3$ can be obtained by the relation (\ref{eq 429}). Note that relation (\ref{eq 429}) gives $Z_1^tV_1=0$. Thus, equation $Z^tV=2\pi$ in Lemma \ref{lm 41} (a) reduces to $Z_2^tV_2=2\pi$. 

Next, consider the equation $Y_4=X_4+X_3Y_2$ in Lemma \ref{lm 41} (d). By the equations (\ref{eq 426}) and (\ref{eq 427}), we get $V_2Z_2^t=-HZ_2(V_1^tJV_1)Z_2^t+HZ_2V_2^tH$. Note that $V_1^tJV_1=0$ since $J$ is a skew-symmetric matrix. Thus, the above relation translates to $V_2Z_2^t=HZ_2V_2^tH$. By a direct calculation, the following quadratic polynomials $v_{2\ell+i}z_{2\ell+j}-v_{n+1-j}z_{n+1-i}$ vanish for $1\leq i,j \leq n-2\ell$. Thus,
\begin{equation}\label{eq 4210}
\wedge^2\left[\ V_2\mid HZ_2\ \right]=0,~ i.e.,~ \wedge^2\left[ \begin{array}{ccc}
	v_{2\ell+1} &\cdots & v_{n}\\
	z_n &\cdots & z_{2\ell+1}
\end{array} \right]=0.
\end{equation}
It is easy to see that all other relations in Lemma \ref{lm 41} (c), (d) are automatically satisfied by the relations (\ref{eq 429}) and (\ref{eq 4210}). For instance, the relation $X_1Y_1=\pi I_{2\ell}$ translates to $\pi(V_1Z_1^t-JZ_1V_1^tJ)=0$, which is satisfied since $Z_1=-\frac{a}{2}JV_1$. Relation $X_2=Y_2X_4$ is equivalent to $aV_2^t=2\pi Z_2^tH$. Recall that $a=Z_2^tHZ_2$. By $V_2Z_2^t=HZ_2V_2^tH$, we get $Z_2^tV_2Z_2^tH=2\pi Z_2^tH$. It is satisfied since $Z_2^tV_2=2\pi$. Similarly, relation $Y_4X_4=\pi_0 I_{n-2\ell}$ is equivalent to $V_2(aV_2^t-2\pi Z_2^tH)=0$, and relation $Y_3=X_3Y_1$ is equivalent to $JV_1(aV_2^t-2\pi Z_2^tH)=0$. Therefore, by all these simplifications we get that $\U_{i_0}$ is isomorphic to the spectrum of the quotient ring
\begin{equation}
 \frac{O_F[V_1, V_2, Z_2]}{(v_{i_0}-1, \,\,\wedge^2\left[\ V_2\mid HZ_2\ \right],\,\, Z_2^t\cdot V_2-2\pi)}.
\end{equation}

{\it Case 2:} Assume that $2\ell+1\leq i_0\leq n$. Consider the relation $Y_4=X_4+X_3Y_2$. From the above, it is equivalent to $\wedge^2\left[\ V_2\mid HZ_2\ \right]=0$. Note that in this case $v_{i_0}=1$ and $ v_{i_0}$ is an entry from $V_2$. Using this and the minor relations we get:
\begin{equation}\label{eq 4212}
\left[\ z_{2\ell+1} \cdots z_{n}\ \right]^t=u\cdot \left[\ v_{n} \cdots v_{2\ell+1}\ \right]^t, \quad\text{where}~ u=z_{n+2\ell+1-i_0}.
\end{equation}
Thus, $Z_2=u\cdot HV_2$. The relation $X_2=Y_2X_4$, from Lemma \ref{lm 41} (c), translates to $(JZ_1-(a-\pi u)V_1)V_2^t=0$ by using (\ref{eq 4212}). Since $ v_{i_0}$ is an entry from $V_2$ with $v_{i_0}=1$ we deduce
\begin{equation}\label{eq 4213}
Z_1=-(a-\pi u)JV_1.
\end{equation}
So far, the matrices $Z_1, Z_2$ are expressed in terms of $V_1, V_2, a, u$. By $Z^tV=2\pi$, from Lemma \ref{lm 41} (a), and (\ref{eq 4213}) we get $Z_2^tV_2=2\pi$, which in turn translates to $u\cdot (V_2^tHV_2)=2\pi$ since $Z_2=u\cdot HV_2$. Note that, in this case, $a=Z_2^tHZ_2=2\pi u$ and so $Z_1=-\pi u\cdot JV_1=-\frac{a}{2}JV_1$ (which is the same as relation (\ref{eq 429})).

All the other relations from the ideal $I$ are automatically satisfied by using the above relations. Thus, $\U_{i_0}$ is isomorphic to the spectrum of the quotient ring:
\begin{equation}
\frac{O_F[V_1, V_2, u]}{(v_{i_0}-1,\,\, u\cdot (V_2^tHV_2)-2\pi)}.
\end{equation} 
\end{proof}
\begin{Remarks}\label{rk 43}
{\rm (a) As we can see from the above proof of {\it Case 2},  the relation $\wedge^2\left[\ V_2\mid HZ_2\ \right]=0$ translates to $Z_2=u\cdot HV_2$ and $Z_2^tV_2=2\pi$ is equivalent to $u\cdot (V_2^tHV_2)=2\pi$. Thus, it is not very hard to observe that for $1\leq i_0\leq n$:
\begin{equation}
\U_{i_0}\simeq \Spec \frac{O_F[V_1, V_2, Z_2]}{(v_{i_0}-1,\,\, \wedge^2\left[\ V_2\mid HZ_2\ \right], \,\,Z_2^t\cdot V_2-2\pi)}.
\end{equation}

(b) From Proposition \ref{prop 42} (2), the special fiber of $\U_{i_0}$ has two irreducible components when $2\ell+1\leq i_0\leq n$, i.e.,
\begin{equation}
V(I)=V(u)\cup V(V_2^tHV_2).
\end{equation}
Both of them are smooth of dimension $n-1$ and their intersection is smooth of dimension $n-2$. Thus, in this case, the affine chart $\U_{i_0}$ has semi-stable reduction over $O_F$, i.e., it is regular and its special fiber is reduced with normal crossings. However, when $1\leq i_0\leq 2\ell$, $\U_{i_0}$ does not have semi-stable reduction; see Remark \ref{NotNormalCrossing}. We will study this case in detail in the next section.
}\end{Remarks}



\section{Flatness and Reducedness of the splitting model ${\rm M}^{\rm spl}_I $}\label{FlatReducSec}
In all of Section \ref{FlatReducSec}, we assume $I=\{\ell\}$ is a strongly non-special index (see Definition \ref{DefNonSpecial}) and $(r,s) = (n-1,1)$. 
Our goal in this section is to show that ${\rm M}^{\rm spl}_I $ is flat over $\Spec O_F$ and its special fiber is reduced. In the course of proving these we will also deduce that ${\rm M}^{\rm spl}_I $ is normal and Cohen-Macaulay. 

\begin{Theorem}\label{SplitFlat}
\begin{itemize}
\item[a)] The splitting model ${\rm M}^{\rm spl}_I $ is $O_F$-flat, normal and Cohen-Macaulay.
\item[b)] The special fiber of ${\rm M}^{\rm spl}_I $ is reduced.
\end{itemize}
\end{Theorem}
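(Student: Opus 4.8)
The plan is to reduce the global statement about ${\rm M}^{\rm spl}_I$ to the explicit affine charts $\calU_{i_0}$ of $\tau^{-1}(*)$ computed in Proposition~\ref{prop 42}, and to treat the two types of charts separately. The first observation is that, since $\tau$ is projective and $\calG$-equivariant and every $\calG$-orbit in ${\rm M}^{\wedge}_I\otimes k$ has the worst point $*$ in its closure, the open subsets $\tau^{-1}(\calG\cdot\calU)$ obtained by translating a $\calG$-neighborhood $\calU$ of $*$ cover all of ${\rm M}^{\rm spl}_I$; hence it suffices to establish flatness, normality, Cohen--Macaulayness and reducedness of the special fiber on the charts $\cup_{i_0=1}^n\calU_{i_0}$. (One must record here why covering $\tau^{-1}(*)$ suffices to cover a neighborhood of $\tau^{-1}(*)$ in ${\rm M}^{\rm spl}_I$; this is the standard local-model-diagram argument, and the $\calG$-action spreads it to the whole scheme.)

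Next I would dispose of the easy charts: for $2\ell+1\le i_0\le n$, Proposition~\ref{prop 42}(2) gives $\calU_{i_0}\cong\Spec O_F[V_1,V_2,u]/(v_{i_0}-1,\ u\cdot(V_2^tHV_2)-2\pi)$, i.e.\ after eliminating $v_{i_0}$ a single hypersurface $u\cdot q(V_2)-2\pi=0$ with $q$ a nondegenerate quadratic form in the remaining $V_2$-variables (together with free variables $V_1$). This is visibly a complete intersection, hence Cohen--Macaulay and (being a hypersurface in a regular ring) one checks flatness over $O_F$ directly: $2\pi$ is not a zero divisor modulo the equation since the special fiber $u\cdot q(V_2)=0$ has no embedded or $\pi$-torsion components. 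Reducedness and in fact normal crossings of the special fiber is exactly Remark~\ref{rk 43}(b): it is $V(u)\cup V(q(V_2))$, two smooth divisors meeting transversally, which also gives regularity of $\calU_{i_0}$ in this range. So all the work is on the charts $1\le i_0\le 2\ell$.

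For $1\le i_0\le 2\ell$, Proposition~\ref{prop 42}(1) gives, after setting $v_{i_0}=1$, a ring of the form
\[
R \;=\; O_F[V_1,V_2,Z_2]\big/\big(\,\wedge^2[\,V_2\mid HZ_2\,],\ Z_2^tV_2-2\pi\,\big),
\]
with $V_1$ a polynomial factor playing no role. The key point — already flagged in the paper — is that this is essentially the local ring appearing in the first author's description of the orthogonal local models (\cite{I.Z}, \cite{PaZa}): the $2\times 2$ minors of a $2\times(n-2\ell)$ matrix together with one extra ``trace'' relation $Z_2^tV_2=2\pi$. So the plan is to invoke (or re-run) that analysis. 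Concretely: (i) the ideal of $2\times 2$ minors of $[\,V_2\mid HZ_2\,]$ defines a determinantal variety which is irreducible, Cohen--Macaulay and normal of the expected codimension $n-2\ell-1$ (standard determinantal-ring theory, e.g.\ via the Eagon--Northcott complex and the fact that generic determinantal ideals are prime); (ii) the element $Z_2^tV_2-2\pi$ is a nonzerodivisor on this determinantal ring, so $R$ is a complete intersection over it, hence Cohen--Macaulay; (iii) to get $O_F$-flatness and reducedness of the special fiber, compute $R/\pi R = R/(\pi) $: modulo $\pi$ the trace relation becomes $Z_2^tV_2=0$, and one shows $R/\pi R$ is reduced by exhibiting its minimal primes — this is where the three irreducible components of the special fiber (promised in Theorem~1.? (b)) appear, namely the ``$V_2=0$'' locus, the ``$Z_2=0$'' locus, and the component where $[\,V_2\mid HZ_2\,]$ has rank $1$ with $Z_2^tV_2=0$; each of these is a reduced (indeed Cohen--Macaulay, normal) component; (iv) conclude flatness by ``$\pi$ is a nonzerodivisor'' $\Leftrightarrow$ no special-fiber component is contained in the closed fiber — equivalently, by the dimension count showing $\dim R/\pi R = \dim R - 1$; (v) finally, normality of $R$ follows from Serre's criterion $(R_1)+(S_2)$: $(S_2)$ from Cohen--Macaulayness, and $(R_1)$ by checking that the singular locus has codimension $\ge 2$ — the singularities of the determinantal part are in codimension $\ge 2$, and the trace relation only introduces singularities along the intersection of components, again codimension $\ge 2$.

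The main obstacle, as I see it, is step (iii)--(iv) for the hard charts: identifying the minimal primes of $R/\pi R$ and proving the scheme is reduced and of the right dimension, i.e.\ that the ``naive'' equations $\wedge^2[\,V_2\mid HZ_2\,]=0,\ Z_2^tV_2=0$ cut out exactly three components with no embedded primes. The cleanest route is probably to localize further — invert one $v_j$ or one $z_j$ — to trivialize the determinantal relations ($Z_2 = (\text{scalar})\cdot HV_2$ on such a chart, up to reindexing as in \eqref{eq 4212}--\eqref{eq 4213}), reducing to a single equation of the shape $u\cdot q(V_2)=2\pi_0$-type (as in Case~2) or $V_2=0$, and then patch; alternatively, invoke the orthogonal-local-model computation of \cite{I.Z} verbatim once the dictionary between the two presentations is spelled out. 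I would present both: first the reduction to \cite{I.Z}, then, for completeness, the direct local computation exhibiting the three components and verifying $(R_1)$ and reducedness. Cohen--Macaulayness of the whole of ${\rm M}^{\rm spl}_I$ then glues from the charts, and flatness over $O_F$ plus reduced special fiber on a dense-enough open (all of $\tau^{-1}(*)$ and its $\calG$-translates) propagates to the whole scheme since ${\rm M}^{\rm spl}_I$ is $\calG$-stable and $\tau$ is proper.
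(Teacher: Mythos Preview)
Your proposal is correct and follows essentially the same approach as the paper: reduce to the affine charts $\calU_{i_0}$, dispose of $2\ell+1\le i_0\le n$ via Remark~\ref{rk 43}(b), and for $1\le i_0\le 2\ell$ identify the ring (after the change of variables $Z_2\mapsto HZ_2$ and stacking $V_2^t,Z_2^t$ into a $2\times(n-2\ell)$ matrix $\mathcal{Z}$) with the orthogonal-local-model ring of \cite{I.Z}, from which flatness, Cohen--Macaulayness, reducedness of the special fiber, and the three irreducible components are imported verbatim. The only substantive deviation is your normality argument: the paper does not verify $(R_1)$ directly but instead invokes \cite[Proposition~9.2]{PZ} (flat over a DVR with smooth generic fiber and reduced special fiber implies normal), which is shorter and avoids analyzing the singular locus.
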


\begin{proof}
From the construction of splitting models and \S \ref{Affine charts}, it's enough to show that the open subschemes
\[
U_{i_0} = \Spec \frac{O_F[V_1, V_2, Z_2]}{(v_{i_0}-1, \,\wedge^2\left[\ V_2\mid HZ_2\ \right],\, Z_2^t\cdot V_2-2\pi)}
\]
for $1 \leq i_0 \leq n$, are flat, normal, Cohen-Macaulay and with reduced special fiber. By Remark \ref{rk 43}, when $ 2\ell+1\leq i_0 \leq n $, $U_{i_0}$ is regular and its special fiber is reduced with normal crossings. So, it's enough to consider the case $ 1\leq i_0 \leq 2\ell.$ In this case, we have that $U_{i_0} \cong \mathbb{A}^{2\ell-1}_{O_F} \times T$ where
\[
T = \Spec O_F[V_2,Z_2]/
(\wedge^2\left[\ V_2\mid HZ_2\ \right], Z_2^t\cdot V_2-2\pi).
\]
We have the isomorphism 
\[
\frac{O_F[V_2,Z_2]}{ 
(\wedge^2\left[\ V_2\mid HZ_2\ \right],\, \,Z_2^t\cdot V_2-2\pi)} \cong   \frac{O_F[V_2,Z_2]}{ 
(\wedge^2\left[\ V_2\mid Z_2\ \right],\,\, Z_2^t\cdot H \cdot V_2-2\pi)}
\]
given by $ V_2 \mapsto V_2$, $Z_2 \mapsto H\cdot Z_2 $. Also, by setting $ \mathcal{Z}_1 : = V^t_2 $, $ \mathcal{Z}_2 : = Z^t_2 $ where $\mathcal{Z} = \left[\ 
\begin{matrix}
\mathcal{Z}_1\\ \hline
\mathcal{Z}_2  
\end{matrix}\ \right] = (z_{ij})\in {\rm Mat}_{2 \times (n-2\ell)}$ we get that the scheme $T$ is isomorphic to 
\[
\Spec O_F[\mathcal{Z}]/( \wedge^2\mathcal{Z},\,  \sum^{n-2\ell}_{ i=1}z_{1,i}\, z_{2,n-2\ell+1-i}-2\pi )
\]
and its special fiber $\overline{T}$  is isomorphic to
\[
\Spec k[\mathcal{Z}]/( \wedge^2\mathcal{Z},\, \sum^{n-2\ell}_{ i=1}z_{1,i}\, z_{2,n-2\ell+1-i} ).
\]
This scheme has already been studied at \cite{I.Z} (see also \cite[Theorem 5.1.1]{PaZa}) in a more general setting  where the size of the matrix $\mathcal{Z}$ was $n \times m$. In particular, 
from \cite[\S 5, \S 8]{I.Z} we have that $T$ is $O_F$-flat, Cohen-Macaulay and of relative dimension $n-2\ell$. Also, from \cite[\S 6, \S 8]{I.Z} we get that $\overline{T}$ is reduced. Since, $T$ is $O_F$-flat with smooth generic fiber (see \S \ref{LocalModelVariantsSplit}) and reduced special fiber, it follows by \cite[Proposition 9.2]{PZ} that $T$ is normal. In the course of proving the reducedness of the special fiber in \cite{I.Z}, the first author also determines its irreducible components (see \cite[\S 7.1, \S 9.2.2 ]{I.Z}). Translating his results in our setting we obtain that the irreducible components of $\overline{T} $ are $ V(J_1)$, $V(J_2)$, $V(J_3)$ where
\[
 J_1 = (Z_2),   \quad J_2  = \left(  \wedge^2\left[\ V_2\mid HZ_2\ \right], \,\,  Z_2^t\cdot V_2, \,\, V_2^t\cdot H \cdot V_2,  \,\,Z_2^t\cdot H \cdot Z_2\right), \quad J_3 = (V_2).
\]
From \cite[\S 7]{I.Z}, we obtain that $V(J_2)$ has dimension $n-2\ell$ and it is smooth over $\Spec(k)$ outside from its closed subscheme of dimension 0 that is defined by the ideal $\left(V_2,\,Z_2\right)$. Also, we can easily see that $V(J_1)$, $V(J_3)$ are smooth affine spaces of dimension $n-2\ell$. 
\end{proof}
\begin{Remark}\label{NotNormalCrossing}
    \rm{ From the above proof and more precisely from the fact that the irreducible component $V(J_2)$ is singular, we conclude that the splitting model ${\rm M}^{\rm spl}_I $ is not semi-stable over $\Spec O_F$. }
\end{Remark}
\begin{Corollary}\label{ProjMor}
 The projective morphism $\tau: {\rm M}^{\rm spl}_I\rightarrow {\rm M}^{\wedge}_{I}$ factors through ${\rm M}^{\rm loc}_I$.
\end{Corollary}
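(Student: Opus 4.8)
The plan is to deduce the corollary formally from the $O_F$-flatness of ${\rm M}^{\rm spl}_I$ proved in Theorem \ref{SplitFlat}(a), together with the fact recorded in \S\ref{SplModelsSection} (following \cite[Remark 4.2]{Kr}) that $\tau$ restricts to an isomorphism on generic fibers. Recall that ${\rm M}^{\rm loc}_I$ is \emph{by definition} the scheme-theoretic closure of $({\rm M}^{\rm naive}_I)_F = ({\rm M}^{\wedge}_I)_F$ inside ${\rm M}^{\rm naive}_I$; in particular ${\rm M}^{\rm loc}_I \hookrightarrow {\rm M}^{\wedge}_I$ is a closed immersion which is an isomorphism on generic fibers, so $({\rm M}^{\rm loc}_I)_F = ({\rm M}^{\wedge}_I)_F$.

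First I would form the closed subscheme $\tau^{-1}({\rm M}^{\rm loc}_I) \subset {\rm M}^{\rm spl}_I$, which makes sense because $\tau$ takes values in ${\rm M}^{\wedge}_I$ and ${\rm M}^{\rm loc}_I$ is closed in ${\rm M}^{\wedge}_I$. Over the generic point, $\tau$ induces an isomorphism ${\rm M}^{\rm spl}_I \otimes_{O_F} F \xrightarrow{\sim} ({\rm M}^{\wedge}_I)_F = ({\rm M}^{\rm loc}_I)_F$, so $\tau^{-1}({\rm M}^{\rm loc}_I)$ contains the entire generic fiber ${\rm M}^{\rm spl}_I \otimes_{O_F} F$ as an open subscheme. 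Now I would invoke flatness: since ${\rm M}^{\rm spl}_I$ is $O_F$-flat and $O_F$ is a discrete valuation ring, its structure sheaf has no $\pi$-torsion, i.e. ${\rm M}^{\rm spl}_I$ coincides with the scheme-theoretic closure of its own generic fiber. Hence the only closed subscheme of ${\rm M}^{\rm spl}_I$ whose restriction to the generic fiber is all of ${\rm M}^{\rm spl}_I \otimes_{O_F} F$ is ${\rm M}^{\rm spl}_I$ itself; applying this to $\tau^{-1}({\rm M}^{\rm loc}_I)$ yields $\tau^{-1}({\rm M}^{\rm loc}_I) = {\rm M}^{\rm spl}_I$, which is precisely the assertion that $\tau$ factors (scheme-theoretically) through ${\rm M}^{\rm loc}_I \hookrightarrow {\rm M}^{\wedge}_I$. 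Equivalently, one can run the argument on the scheme-theoretic image of the projective (hence quasi-compact) morphism $\tau$, using that its formation commutes with the flat base change $O_F \to F$, to see that the scheme-theoretic image is exactly ${\rm M}^{\rm loc}_I$.

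There is essentially no obstacle remaining at this stage: the entire substance of the corollary is contained in the flatness statement of Theorem \ref{SplitFlat}(a), and what is left is the standard principle that an $O_F$-flat scheme is the scheme-theoretic closure of its generic fiber. The only point that must be handled carefully is bookkeeping about where the scheme-theoretic image lives — it should be computed inside ${\rm M}^{\wedge}_I$ (equivalently ${\rm M}^{\rm naive}_I$) — and the observation that its generic fiber is all of $({\rm M}^{\wedge}_I)_F$ precisely because $\tau_F$ is an isomorphism rather than merely dominant.
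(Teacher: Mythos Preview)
Your argument is correct and is exactly the approach the paper takes: the corollary is stated without proof because, as noted at the end of \S\ref{SplModelsSection}, the factorization through ${\rm M}^{\rm loc}_I$ follows immediately from the $O_F$-flatness of ${\rm M}^{\rm spl}_I$ established in Theorem~\ref{SplitFlat}(a). Your write-up simply makes explicit the standard principle (flat over a DVR $\Rightarrow$ equal to the closure of the generic fiber) that the paper invokes with the phrase ``because of flatness.''
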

\section{Moduli description of the irreducible components of ${\rm M}^{\rm spl}_I\otimes k$}
In this section, we give a moduli description of the irreducible components of ${\rm M}^{\rm spl}_I\otimes k$ when the signature is $(n-1,1)$ and the index set $I=\{\ell\}$ is strongly non-special. Recall that we have a symmetric bilinear form
\begin{equation}
	(\ ,\ ):(\Lambda_{n-\ell}\otimes \calO_S)\times (\Lambda_{\ell}\otimes \calO_S)\rightarrow \calO_S.
\end{equation} \vspace{-0.1cm}
For simplicity, we omit the base change from the above notation. Set $(t\pm \pi)\Lambda_{\kappa}=\{(t\pm \pi)x\mid x\in \Lambda_\kappa\}$. It is easy to see that 
\begin{equation}\label{eq 522}
	((t+ \pi)\Lambda_{n-\ell})^\perp=(t+ \pi)\Lambda_{\ell},\quad ((t- \pi)\Lambda_{n-\ell})^\perp=(t- \pi)\Lambda_{\ell}.
\end{equation}
By (\ref{eq 522}), we have a perfect pairing between $(t+ \pi)\Lambda_{n-\ell}$ and $\Lambda_{\ell}/(t+ \pi)\Lambda_{\ell}$. Note that the last lattice is isomorphic to $(t- \pi)\Lambda_{\ell}$ via $e_i\mapsto (t-\pi)e_i$. Thus, we have an induced perfect pairing 
\begin{equation}
\{\ ,\ \}:(t+ \pi)\Lambda_{n-\ell}\times (t- \pi)\Lambda_{\ell}\rightarrow \calO_S,
\end{equation}
given by $\{(t+ \pi)x, (t- \pi)y\}=((t+ \pi)x, y)$. For points $(\calF_\ell, \calF_{n-\ell}, \calG_{n-\ell})$ in the splitting model ${\rm M}^{\rm spl}_I$, we have $\calG_{n-\ell}\subset (t+ \pi)\Lambda_{n-\ell}$ since $(t-\pi)\calG_{n-\ell}=(0)$. Thus, we can consider the orthogonal complement of $\calG_{n-\ell}$ under this new modified pairing, which we denote by $\calG_{n-\ell}^{\perp'}\subset (t- \pi)\Lambda_{\ell}$. 

From the inclusion maps $A_\ell: \Lambda_{\ell}\rightarrow \Lambda_{n-\ell}, A_{n-\ell}: \Lambda_{n-\ell}\rightarrow t^{-1}\Lambda_{\ell}$, we define:

\begin{Definition}
Let $M_i$ be the closed subscheme in the special fiber of ${\rm M}^{\rm spl}_I$, which send each $(O_F\otimes k)$-algebra $R$ to the set of families:
\[
\begin{array}{l}
M_1(R)=\{(\calF_\ell, \calF_{n-\ell}, \calG_{n-\ell})\in {\rm (M}^{\rm spl}_I\otimes k)(R)\mid t\calF_{\ell}=0,\quad t\calF_{n-\ell}=0\},\\
M_2(R)=\{(\calF_\ell, \calF_{n-\ell}, \calG_{n-\ell})\in {\rm (M}^{\rm spl}_I\otimes k)(R)\mid tA_{\ell}^{-1}(\calG_{n-\ell})\subset \calG_{n-\ell}^{\perp'}, A_{\ell}(\calF_{\ell})\subset t\Lambda_{n-\ell}     \},\\
M_3(R)=\{(\calF_\ell, \calF_{n-\ell}, \calG_{n-\ell})\in {\rm (M}^{\rm spl}_I\otimes k)(R)\mid \calG_{n-\ell}\subset A_\ell(\calG_{n-\ell}^{\perp'})\}.\\	
\end{array}
\]
\end{Definition}

From the above definition, we get

\begin{Theorem}
The special fiber of the splitting model ${\rm M}^{\rm spl}_I$ has three irreducible components, $M_1, M_2$ and $M_3$.
\end{Theorem}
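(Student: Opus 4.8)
The plan is to combine the local affine-chart computation from \S\ref{Affine charts} with the description of irreducible components of $\overline{T}$ obtained in the proof of Theorem \ref{SplitFlat}, and then match these chart-local components with the moduli-theoretic subschemes $M_1, M_2, M_3$. First I would recall that by Theorem \ref{SplitFlat}, ${\rm M}^{\rm spl}_I$ is flat over $\Spec O_F$ with reduced special fiber; hence ${\rm M}^{\rm spl}_I \otimes k$ is a reduced scheme and its irreducible components are determined by any open cover. Over the open locus $\tau^{-1}(*)$ and its neighborhood $\cup_{i_0} \U_{i_0}$ (which, by the local model diagram and $\calG$-equivariance, meets every irreducible component of the special fiber — since the worst point $*$ lies in the closure of every stratum), Remark \ref{rk 43} and the proof of Theorem \ref{SplitFlat} show that the special fiber $\overline{T}$ (equivalently $\overline{\U}_{i_0}$ up to a smooth affine factor) has exactly three irreducible components $V(J_1) = V(Z_2)$, $V(J_2)$, and $V(J_3) = V(V_2)$. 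So there are at most three irreducible components globally, and I would argue there are exactly three by showing each is the closure of an open piece that is genuinely distinct (e.g., by a dimension or tangent-space count, or simply because the three loci are already distinct inside a single chart).

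Next I would identify, chart by chart, which of $V(J_1), V(J_2), V(J_3)$ corresponds to each $M_j$. Using the matrix presentation of \S\ref{EqtsAffChrt}, one translates the defining conditions of $M_1, M_2, M_3$ into the coordinates $V_1, V_2, Z_1, Z_2$ (or $\mathcal Z$). For $M_1$: the condition $t\calF_\ell = 0 = t\calF_{n-\ell}$ forces $\calF_\ell, \calF_{n-\ell}$ to be the worst point, so within $\U_{i_0}$ this is exactly the locus $\tau^{-1}(*)$; in coordinates, recalling $X, Y$ are expressed through $V, Z$ via (\ref{eq 426})–(\ref{eq 427}) and the worst point means $Y = -\pi I_n$, i.e. $VZ^t = 0$, so (since $V$ has a unit entry) $Z = 0$, hence $Z_2 = 0$ — matching $V(J_1) = V(Z_2)$. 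For $M_3$: the condition $\calG_{n-\ell} \subset A_\ell(\calG_{n-\ell}^{\perp'})$ should, after unwinding the modified pairing $\{\,,\,\}$ of (5.0.5) and the matrices $A_\ell$, $(\,,\,)$, reduce to $V_2 = 0$, matching $V(J_3) = V(V_2)$. The remaining component $V(J_2)$ is then forced to be $M_2$; alternatively one checks directly that the two conditions defining $M_2(R)$ — namely $tA_\ell^{-1}(\calG_{n-\ell}) \subset \calG_{n-\ell}^{\perp'}$ and $A_\ell(\calF_\ell) \subset t\Lambda_{n-\ell}$ — translate into precisely the generators of $J_2 = (\wedge^2[V_2 \mid HZ_2],\ Z_2^t V_2,\ V_2^t H V_2,\ Z_2^t H Z_2)$.

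Having matched the charts, I would assemble the global statement: each $M_j$ is a closed subscheme of ${\rm M}^{\rm spl}_I \otimes k$ by construction (the defining conditions are closed); restricted to the cover $\cup \U_{i_0}$ of a neighborhood of the worst point each $M_j$ equals one of the three irreducible components $V(J_i)$, which are irreducible (affine space for $J_1, J_3$; for $J_2$ one invokes the irreducibility established in \cite{I.Z}); and since every irreducible component of ${\rm M}^{\rm spl}_I \otimes k$ meets this $\calG$-stable neighborhood (the worst point being in the closure of every $\calG$-orbit, by \S\ref{Affine charts}), these are all of them. Finally, irreducibility of $M_j$ globally follows because $M_j$ is the scheme-theoretic closure of its intersection with $\cup \U_{i_0}$ — here I would use that $\overline{M_j \cap \cup\U_{i_0}}$ is irreducible and that $M_j$, being a reduced closed subscheme contained in the union of irreducible components it meets, cannot acquire extra components disjoint from the charts.

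The main obstacle I anticipate is the bookkeeping in the second step: correctly translating the intrinsic conditions defining $M_2$ and $M_3$ — especially the ones involving the auxiliary perfect pairing $\{\,,\,\}$ on $(t+\pi)\Lambda_{n-\ell} \times (t-\pi)\Lambda_\ell$ and the maps $A_\ell^{-1}$, $A_\ell$ applied to rank-one subspaces — into the explicit coordinate relations, and verifying that what comes out is \emph{exactly} $J_2$ (resp. $J_3$) rather than some a priori larger or smaller ideal. One must be careful that $\calG_{n-\ell}^{\perp'}$ is computed with respect to the modified pairing, not the original $(\,,\,)$, and that the identification $\Lambda_\ell/(t+\pi)\Lambda_\ell \cong (t-\pi)\Lambda_\ell$ is used consistently. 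A secondary, milder point is justifying that the three chart-local components glue to \emph{globally} irreducible subschemes and that no component of the special fiber is missed by the cover — but this follows cleanly from reducedness (Theorem \ref{SplitFlat}) together with the fact, recalled in \S\ref{Affine charts}, that $*$ is the unique closed $\calG$-orbit and hence lies in the closure of every orbit, so $\cup_{i_0}\U_{i_0}$ is a faithfully-covering $\calG$-stable open set meeting all components.
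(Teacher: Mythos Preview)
Your proposal is correct and follows essentially the same approach as the paper: reduce to the affine charts $\U_{i_0}$ around $\tau^{-1}(*)$, invoke the three irreducible components $V(J_1),V(J_2),V(J_3)$ already identified in the proof of Theorem~\ref{SplitFlat}, and then match each $M_j$ to $V(J_j)$ by translating the moduli-theoretic conditions into the coordinates $V_1,V_2,Z_2$ (the paper carries out exactly the computations you sketch for $M_1$, $M_3$, and $M_2$, in that order). One small correction: the open set $\cup_{i_0}\U_{i_0}$ is not itself $\calG$-stable, so your justification that it meets every irreducible component should rest solely on the worst-point argument you give (that $*$ lies in the closure of every $\calG$-orbit and $\tau$ is proper), not on $\calG$-stability.
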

\begin{proof}
By passing to the affine chart $\U_{i_0}$ of the inverse image of the worst point as in \S \ref{Affine charts} and the proof of Theorem \ref{SplitFlat}, we only need to identify the three  irreducible components $V(J_1), V(J_2), V(J_3)$ with  $M_1, M_2, M_3$ respectively.

Choose a point $(\calF_\ell, \calF_{n-\ell}, \calG_{n-\ell})\in \U_{i_0}$ as \S \ref{Affine Charts around}:
\[  
\calF_\ell = \left[\ 
\begin{matrix}
X\\ \hline
I_n  
\end{matrix}\ \right], \quad 
\mathcal{F}_{n-\ell}=  \left[\ 
\begin{matrix}
Y\\ \hline
I_n  
\end{matrix}\ \right],\quad
\calG_{n-\ell}= \left[\ 
\begin{matrix}
G_1\\ \hline
G_2  
\end{matrix}\ \right].
\]
It is easy to see that relation $t\calF_\ell=0$ is equivalent to $X=0$. Note that $X=0$ if and only if $Y=0$ by $(\calF_{n-\ell}, \calF_{\ell})=0$, and $Y=0$ is equivalent to $t\calF_{n-\ell}=0$. In the special fiber, we have $Y=VZ^t$. Recall that there is a non-vanishing element in $V$ by rk$(\calG_{n-\ell})=1$. Thus, the relation $Y=0$ translates to $Z=0$, which amounts to $Z_2=0$ by $Z_1=-\frac{a}{2}JV_1, a=Z_2^tHZ_2$. Therefore, the relations $t\calF_\ell=0, t\calF_{n-\ell}=0$ are equivalent to $Z_2=0$, where the ideal $(Z_2)$ is $J_1$ by Theorem \ref{SplitFlat}.

For $M_3(R)$, consider the orthogonal complement $\calG_{n-\ell}^{\perp'}$. Note that the modified pairing $\{\ ,\ \}$ under the reordered basis of $t\Lambda_{\ell}, t\Lambda_{n-\ell}$ in (\ref{eq 411}) is
\begin{equation}
\{\ ,\ \}=\left[\
\begin{array}{cc}
	-J_{2\ell} &\\
	&H_{n-2\ell}	
\end{array}\
\right].
\end{equation}
From the above, we get 
\begin{equation}\label{eq 525}
\calG_{n-\ell}^{\perp'}=\{\left[\
\begin{matrix}
W_1\\ \hline
W_2  
\end{matrix}\ \right]\mid -V_1^tJW_1+V_2^tHW_2=0
\}\subset
t\Lambda_{\ell},	
\end{equation}
 where $W_1$ (resp. $W_2$) is of size $2\ell\times 1$ (resp. $(n-2\ell)\times 1$). Note that rk$(\calG_{n-\ell}^{\perp'})=n-1$. The image of $\calG_{n-\ell}^{\perp'}$ under $A_\ell$ is
 \begin{equation}
 	A_\ell(\calG_{n-\ell}^{\perp'})=
 	\{\left[\
\begin{matrix}
W_1\\ \hline
0  
\end{matrix}\ \right]\mid \exists W'_{(n-2\ell)\times 1} ~\text{such that}-V_1^tJW_1+V_2^tHW'=0
\}.
 \end{equation} 
Thus, the relation $\calG_{n-\ell}\subset A_\ell(\calG_{n-\ell}^{\perp'})$ is equivalent to $V_2=0$, since $V_1^tJV_1=0$. We deduce that $V(J_3)=V((V_2))$ represents $M_3(R)$.

Lastly, note that 
\begin{equation}
tA_{\ell}^{-1}(\calG_{n-\ell})=\text{span}_{k}\{
\left[\
\begin{matrix}
0\\ \hline
V_2  
\end{matrix}\ \right]
\}\subset t\Lambda_\ell.
\end{equation}
By (\ref{eq 525}), the relation $tA_{\ell}^{-1}(\calG_{n-\ell})\subset \calG_{n-\ell}^{\perp'}$ is equivalent to $V_2^tHV_2=0$. It is easy to see that $A_\ell(\calF_\ell)\subset t\Lambda_{n-\ell}$ translates to $X_1=0, X_2=0$. When $2\ell+1\leq i_0\leq n$, it is automatically satisfied since $X_1=-\frac{a}{2} V_1V_1^tJ, \,\,X_2=\frac{a}{2}V_1V_2^tH$, and $a=2\pi u=0$ in the special fiber. So it is enough to consider $1\leq i_0\leq 2\ell$, where the relation $X_1=0, X_2=0$ is equivalent to $a=Z_2^tHZ_2=0$ by the proof of Proposition \ref{prop 42}. Thus, we get $V(J_2)$ represents $M_2(R)$.
\end{proof}

\section{A resolution for ${\rm M}^{\rm spl}_I $}\label{ResolSpl}
\vspace{-0.1cm}
In what follows, we continue to assume that $I=\{\ell\}$ is a strongly non-special index and the signature $(r,s) = (n-1,1)$. 

While ${\rm M}^{\rm spl}_I$ is flat over $O_F$ from Theorem \ref{SplitFlat}, it does not have semi-stable reduction since one of the irreducible components of ${\rm M}^{\rm spl}_I\otimes k$ is not smooth; see Remark \ref{NotNormalCrossing}. In this section, we will construct a semi-stable resolution $\rho: {\rm M}_I^{\rm bl}\rightarrow {\rm M}^{\rm spl}_I$ and show that ${\rm M}_I^{\rm bl}$ is the blow-up of ${\rm M}^{\rm spl}_I$ along the closed subscheme $\tau^{-1}((t\Lambda_{\ell}, t\Lambda_{n-\ell}))$. 
\vspace{-0.1cm}
\subsection{Blow-up of $\U_{i_0}$}\label{Blow-up} From Proposition \ref{prop 42} and Remarks \ref{rk 43} (a), we have that $\U_{i_0}\simeq \Spec A=\Spec O_F [V_1, V_2, Z_2]/I$, where $I$ is given by:
\begin{equation}
I=	(v_{i_0}-1, \,\,\wedge^2\left[\ V_2\mid HZ_2\ \right],\,\, Z_2^t\cdot V_2-2\pi)
\end{equation}
for $1\leq i_0\leq n$. The smooth closed subscheme $U_{i_0}\cap \tau^{-1}((t\Lambda_{\ell}, t\Lambda_{n-\ell}))$ is defined by the ideal $I'=(Z_2)$. Let $\U_{i_0}^{\rm bl}$ be the blow-up of $\U_{i_0}$ along the ideal $I'=(Z_2)$ and $\rho: \U_{i_0}^{\rm bl}\rightarrow \U_{i_0}$ the blow-up morphism. We have:
\begin{equation}
	\U_{i_0}^{\rm bl}={\rm Proj (\tilde{A})},  
\end{equation}
where $\tilde{A}$ is the graded $A$-algebra $\oplus_{d\geq 0} I^d$ with $I^0=A$.

\begin{Proposition}\label{prop 61}
$\U_{i_0}^{\rm bl}$ has semi-stable reduction over $O_F$ for $1\leq i_0\leq n$. 	
\end{Proposition}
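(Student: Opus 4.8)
The plan is to reduce the statement to an explicit local computation on each affine chart $\U_{i_0}$, using the presentation of $\U_{i_0}^{\rm bl} = {\rm Proj}(\tilde A)$ from the blow-up of the ideal $I' = (Z_2)$. For the charts with $2\ell+1 \leq i_0 \leq n$, Remark \ref{rk 43}(b) already shows $\U_{i_0}$ itself has semi-stable reduction; since blowing up a regular scheme along a regular (here smooth, codimension one — indeed the divisor $V(Z_2)$ cut out inside one of the two smooth components) center changes nothing essential, or more precisely since $(Z_2)$ is already locally principal modulo the equations on the smooth locus, one checks directly that $\rho$ is an isomorphism over the generic point and that $\U_{i_0}^{\rm bl}$ is again regular with reduced normal-crossings special fiber. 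The real content is therefore the charts $1 \leq i_0 \leq 2\ell$, where by Theorem \ref{SplitFlat} we have $\U_{i_0} \cong \mathbb{A}^{2\ell-1}_{O_F} \times T$ with $T = \Spec O_F[V_2, Z_2]/(\wedge^2[\,V_2 \mid HZ_2\,],\, Z_2^t V_2 - 2\pi)$, and the blow-up center restricts to $(Z_2) \subset T$. Since the $\mathbb{A}^{2\ell-1}_{O_F}$ factor is irrelevant for semi-stability, it suffices to prove that the blow-up of $T$ along $(Z_2)$ has semi-stable reduction.

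The key steps for the $T$-computation: first, cover ${\rm Proj}(\tilde A)$ by the standard affine charts $D_+(z_j)$ where $z_j$ ranges over the generators $z_{2\ell+1}, \dots, z_n$ of $Z_2$ (the generators of $I'$), so on the $j$-th chart every other $z_i$ becomes $z_i = z_j w_i$ for new coordinates $w_i$, and $z_j$ itself is the exceptional parameter. Second, substitute into the two defining relations of $T$: the minor relations $\wedge^2[\,V_2 \mid HZ_2\,] = 0$ become, after factoring out the common power of $z_j$, relations expressing the $v_i$'s linearly in terms of a single $v$ and the $w_i$'s (this is the familiar phenomenon that blowing up resolves the "two rank-one matrices are proportional" locus), while the relation $Z_2^t V_2 = 2\pi$ becomes $z_j \cdot (\text{a polynomial in } v, w) = 2\pi$ after cancelling one factor of $z_j$. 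Third, analyze the resulting ring: after eliminating the redundant variables via the linearized minor relations, one is left with a ring of the shape $O_F[\text{coordinates}]/(z_j \cdot q - 2\pi)$ where $q = V_2^t H V_2$ or a closely related nondegenerate quadratic (or bilinear) form in the surviving free variables; choosing suitable coordinates that diagonalize or hyperbolically split $q$ puts the equation into the form $uxy - 2\pi = 0$ (or a smooth-over-that form), which is exactly the shape promised in the introduction's final theorem. Fourth, conclude regularity and normal-crossings special fiber from the explicit equation $uxy = 2\pi$, whose special fiber $uxy = 0$ is visibly a reduced divisor with normal crossings and whose total space is regular (being smooth over $\Spec O_F[u,x,y]/(uxy-2\pi)$).

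I would organize the write-up by first disposing of $2\ell+1 \leq i_0 \leq n$ in a sentence or two by appeal to Remark \ref{rk 43}(b) and the fact that blowing up along a Cartier divisor (or along a smooth center in a regular scheme, in a way that is an isomorphism here or a controlled modification) preserves semi-stability, and then devoting the bulk to the $1 \leq i_0 \leq 2\ell$ case via the chart-by-chart substitution described above. The main obstacle I anticipate is bookkeeping in the second step: correctly tracking which power of $z_j$ divides each minor relation and verifying that after cancellation the minor relations genuinely become the graph of a linear map (so that no spurious singular locus survives), and then verifying that the resulting quadratic form $q$ in the leftover coordinates is nondegenerate enough to be brought to the hyperbolic/monomial form $uxy$. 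A secondary technical point is patching: one must check that the local pictures on the charts $D_+(z_j)$ glue to give the global $\U_{i_0}^{\rm bl}$ with the stated properties, and that the exceptional divisor maps isomorphically (as a $\mathbb{P}^{n-2\ell-1}$-bundle, in fact) onto the blow-up center, but this is formal once the local rings are identified. I expect the computation to run essentially in parallel to, and to borrow the coordinate choices from, the analogous resolution in the orthogonal case treated in \cite{I.Z}, \cite{PaZa}, so the genuinely new work is limited to confirming that the extra factor $\mathbb{A}^{2\ell-1}_{O_F}$ and the antidiagonal form $H$ introduce no new difficulty.
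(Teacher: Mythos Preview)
Your approach is essentially the same as the paper's: cover the blow-up by the charts $D_+(z_j)$ for the generators of $(Z_2)$, use the blown-up minor relations to eliminate all the $v_i$ in terms of a single one, and reduce to an explicit monomial equation. For $2\ell+1\le i_0\le n$, the clean observation (which the paper uses) is that in the coordinates of Proposition~\ref{prop 42}(2) one has $Z_2 = u\cdot HV_2$ with $v_{i_0}=1$ an entry of $V_2$, so $(Z_2)=(u)$ is already principal and the blow-up is literally an isomorphism; your phrasing about ``regular center in a regular scheme'' is more circuitous than needed.

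There is one genuine imprecision in your sketch of the main case $1\le i_0\le 2\ell$ that would trip you up if taken at face value. After substituting $z_i=z_j t_i$ and eliminating via the minors, the relation $Z_2^t V_2=2\pi$ does \emph{not} become $z_j\cdot q=2\pi$ with $q$ a homogeneous nondegenerate quadratic; it becomes the \emph{triple} product $z_j\cdot v\cdot q(t)=2\pi$, where $v$ is the single surviving entry of $V_2$ and $q(t)=\sum_i t_i t_{n+2\ell+1-i}$. Moreover, $q$ is not homogeneous: since $t_j=1$ on this chart, $q$ has a nonzero linear term (e.g.\ $\partial q/\partial t_{n+2\ell+1-j}$ is a unit), so $V(q)$ is already smooth. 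This matters because your proposed mechanism---``hyperbolically split $q$ to reach $uxy=2\pi$''---would fail for a genuinely homogeneous quadratic $q$ in $\ge 3$ variables, whose zero locus is singular at the origin and hence not a normal-crossings component. The correct picture is that the special fiber on each chart is cut out by $z_j\cdot v\cdot q=0$ with three smooth components $V(z_j)$, $V(v)$, $V(q)$ meeting transversally, exactly the $uxy=2\pi$ model promised. If you carry out the substitution you describe carefully, you will see this three-factor structure emerge; just do not expect any ``diagonalization'' step to be what saves you.
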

\vspace{-0.1cm}
\begin{proof}
When $1\leq i_0\leq 2\ell$, we get
\begin{equation}
\U_{i_0}=\Spec A=\mathbb{A}_{O_F}^{2\ell-1}\times \Spec \frac{O_F[V_2, Z_2]}{(\wedge^2\left[\ V_2\mid HZ_2\ \right], \,\,Z_2^t\cdot V_2-2\pi)}.	
\end{equation}
There are $(n-2\ell)$- affine patches in the blow-up of $\Spec A$ along the ideal $I'=(Z_2)=(z_{2\ell+1}, \cdots, z_{n})$. Let $t_{2\ell+1},\cdots, t_n$ be the represented elements of $z_{2\ell+1}, \cdots, z_{n}$ in the graded algebra $\tilde{A}$, considered as homogeneous elements of degree $1$. 

Assume that $t_{2\ell+1}=1$. The affine chart $D_+(t_{2\ell+1})\subset \U_{i_0}^{\rm bl}$ is  given by 
\begin{equation}
\Spec \frac{A[t_{2\ell+2},\cdots, t_{n}]}{J_{2\ell+1}},	
\end{equation}
where 
\begin{equation}\label{eq 515}
J_{2\ell+1}=\{f(t_{2\ell+2},\cdots ,t_{n})\in A[t_{2\ell+2},\cdots ,t_{n}]\mid \exists N\geq 0, z_{2\ell+1}^N\cdot f\in (z_{2\ell+1}t_{i}-z_i)_{2\ell+1\leq i\leq n}\}.	
\end{equation}
From (\ref{eq 515}), it is easy to see that $v_i-t_{n+2\ell+1-i}v_n\in J_{2\ell+1}$. Thus, the affine chart $D_+(t_{2\ell+1})$ is isomorphic to 
\begin{equation}
\mathbb{A}_{O_F}^{2\ell-1}\times \Spec \frac{O_F[v_n, z_{2\ell+1}, t_{2\ell+1},\cdots, t_{n}]}{(t_{2\ell+1}-1,\,\,v_nz_{2\ell+1}\cdot q-2\pi)}	
\end{equation}
where $q=\sum_{i=2\ell+1}^n t_it_{n+2\ell+1-i}$. The generic fiber of $D_+(t_{2\ell+1})$ is smooth. In the special fiber, we have three irreducible components, corresponding to $J_1=(v_n), J_2=(z_{2\ell+1})$, and $J_3=(q)$. All of them are isomorphic to $\mathbb{A}_k^{n-1}$, and their intersections are smooth of correct dimension. Next, note that the special fiber of $V(J_{2\ell+1})$ is reduced, since $(v_nz_{2\ell+1} q)=J_1\cap J_2\cap J_3$. We can directly see that the ideal $J_1, J_2, J_3$ are principal over $V(J_{2\ell+1})$. Thus by \cite[Remark 1.1.1]{Ha}, $V(J_{2\ell+1})$ is regular, and has semi-stable reduction, which in turn gives that $D_+(t_{2\ell+1})$ has semi-stable reduction.

By symmetry, all other affine patches $D_+(t_i)$ in $\U_{i_0}^{\rm bl}$ are isomorphic to  $D_+(t_{2\ell+1})$. Therefore, $\U_{i_0}^{\rm bl}$ has semi-stable reduction.

When $2\ell+1\leq i_0\leq n$, we get 
\begin{equation}
\U_{i_0}\simeq \Spec \frac{O_F[V_1, V_2, u]}{(v_{i_0}-1,\,\, u\cdot (V_2^tHV_2)-2\pi)}	
\end{equation}
by Proposition \ref{prop 42} (2). Note that the ideal $I'=(Z_2)$ translates to $I'=(u)$ in this case. Thus, we get $\U_{i_0}^{\rm bl}=\U_{i_0}$ and so it has semi-stable reduction by Remark \ref{rk 43} (b).
\end{proof}

Set $T=\left[\ t_{2\ell+1} \cdots t_n \ \right]^t$. As a consequence of the proposition above, we have:

\begin{Corollary}\label{Cor.72}
 The blow-up $\U_{i_0}^{\rm bl}$ is isomorphic to 
	\begin{equation}
	{\rm Proj} \frac{B[T]}{(\wedge^2 \left[\ Z_2\mid T \ \right], \,\,\wedge^2 \left[\ V_2\mid HT \ \right])},
	\end{equation}
	for $1\leq i_0\leq n$ where $ B=O_F [V_1, V_2, Z_2]/(v_{i_0}-1, Z_2^t\cdot V_2-2\pi)$.
\end{Corollary}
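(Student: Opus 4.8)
The plan is to identify the two sides one standard affine chart at a time, recycling the explicit blow-up computations already carried out in the proof of Proposition~\ref{prop 61}. By definition $\U_{i_0}^{\rm bl}={\rm Proj}\bigl(\bigoplus_{d\ge 0}(I')^d\bigr)$ is the ${\rm Proj}$ of the Rees algebra of $I'=(Z_2)=(z_{2\ell+1},\dots,z_n)$ inside $A$, where by Remarks~\ref{rk 43}(a) we may take $A=O_F[V_1,V_2,Z_2]/(v_{i_0}-1,\,\wedge^2[\,V_2\mid HZ_2\,],\,Z_2^tV_2-2\pi)$. First I would introduce degree-one indeterminates $T=[\,t_{2\ell+1}\ \cdots\ t_n\,]^{t}$ and the graded $O_F$-algebra surjection
\[
\Phi\colon\ \frac{B[T]}{(\wedge^2[\,Z_2\mid T\,],\ \wedge^2[\,V_2\mid HT\,])}\ \longrightarrow\ \bigoplus_{d\ge 0}(I')^d,\qquad T_j\longmapsto z_j,
\]
which is well defined because $v_{i_0}-1$ and $Z_2^tV_2-2\pi$ already vanish in $A$, while $\wedge^2[\,Z_2\mid T\,]\mapsto\wedge^2[\,Z_2\mid Z_2\,]=0$ and $\wedge^2[\,V_2\mid HT\,]\mapsto\wedge^2[\,V_2\mid HZ_2\,]=0$ (the last being a defining relation of $A$); surjectivity is clear. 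Next I would observe that replacing $A$ by $B$ (i.e.\ forgetting the relation $\wedge^2[\,V_2\mid HZ_2\,]$) is harmless for ${\rm Proj}$: a short manipulation of $2\times2$ minors gives $t_j\cdot p\in(\wedge^2[\,Z_2\mid T\,],\wedge^2[\,V_2\mid HT\,])$ for every index $j$ and every $2\times2$ minor $p$ of $[\,V_2\mid HZ_2\,]$, so $p$ lies in the saturation with respect to the irrelevant ideal $(t_{2\ell+1},\dots,t_n)$ and can be adjoined or dropped without altering the scheme. Thus $\Phi$ exhibits $\U_{i_0}^{\rm bl}$ as a closed subscheme of ${\rm Proj}\bigl(B[T]/(\wedge^2[\,Z_2\mid T\,],\wedge^2[\,V_2\mid HT\,])\bigr)$, and it remains to check this closed immersion is an isomorphism, which I would do on the standard cover $\{D_+(T_i)\}_{2\ell+1\le i\le n}$.

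On $D_+(T_i)$ one dehomogenizes with $t_i=1$; the minors of $[\,Z_2\mid T\,]$ then force $Z_2=z_i\,T$ and the minors of $[\,V_2\mid HT\,]$ force $V_2=v_{i^{*}}\cdot HT$ with $i^{*}:=2\ell+n+1-i$, so on this chart every $z_j$ and $v_j$ is expressed through $z_i$, $w:=v_{i^{*}}$ and the $t_j$, and the relation $Z_2^tV_2-2\pi$ becomes $z_i\,w\,q-2\pi$ with $q=\sum_{j=2\ell+1}^{n}t_j t_{2\ell+n+1-j}$. When $1\le i_0\le 2\ell$ the relation $v_{i_0}-1$ only fixes a coordinate of $V_1$, and $D_+(T_i)$ becomes isomorphic to $\mathbb{A}^{2\ell-1}_{O_F}\times\Spec O_F[v_{i^{*}},z_i,t_j]/(t_i-1,\ v_{i^{*}}z_i q-2\pi)$, which is precisely the affine patch $D_+(t_i)$ of the blow-up computed in the proof of Proposition~\ref{prop 61} (for $i=2\ell+1$ one has $i^{*}=n$, $w=v_n$, reproducing the equation there verbatim). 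When $2\ell+1\le i_0\le n$ the relation $v_{i_0}-1$ makes $w$ a unit; setting $u:=z_i/w$ gives $Z_2=u\cdot HV_2$ and $u\,V_2^tHV_2=2\pi$, recovering the description of $\U_{i_0}$ in Proposition~\ref{prop 42}(2), consistently with $\U_{i_0}^{\rm bl}=\U_{i_0}$ in that range. In all cases $\Phi$ restricts to an isomorphism on $D_+(T_i)$, hence is an isomorphism globally, which proves the corollary.

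The main obstacle is exactly the passage from the symmetric-algebra presentation $A[T]/(\wedge^2[\,Z_2\mid T\,])$ to the Rees algebra: one must verify that adjoining only the single extra family of quadratic relations $\wedge^2[\,V_2\mid HT\,]$ (on top of $Z_2^tV_2=2\pi$, already built into $B$) suffices to cut ${\rm Proj}$ down to the genuine blow-up, with no residual saturation contributions left over. This is precisely what the chart-by-chart comparison with Proposition~\ref{prop 61} is designed to settle, and it is the step where the particular determinantal shape of the equations of $\U_{i_0}$ is used.
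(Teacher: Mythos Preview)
Your proof is correct and follows essentially the same approach as the paper's: the corollary is presented there simply ``as a consequence of the proposition above,'' and your chart-by-chart identification with the affine patches computed in Proposition~\ref{prop 61} is exactly how that consequence is meant to be read. The extra structure you supply (the explicit graded surjection $\Phi$ and the saturation remark showing that the relations $\wedge^2[\,V_2\mid HZ_2\,]$ may be dropped at the level of ${\rm Proj}$) is a clean way to organize the comparison, but it does not depart from the paper's line of argument.
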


\subsection{A resolution for ${\rm M}^{\rm spl}_I $} Recall that $\tau^{-1}(t\Lambda_\ell, t\Lambda_{n-\ell})$ is the smooth $\calG_I$-invariant closed subscheme, which is isomorphic to $\mathbb{P}_k^{n-1}$ in the special fiber of the splitting model ${\rm M}^{\rm spl}_I $, and $\tau^{-1}(t\Lambda_\ell, t\Lambda_{n-\ell})\subset \cup_{i_0=1}^n \U_{i_0}$. Consider the blow-up of ${\rm M}^{\rm spl}_I $ along the subscheme $\tau^{-1}(t\Lambda_\ell, t\Lambda_{n-\ell})$. This gives a $\calG_I$-equivariant, birational projective morphism:
\begin{equation}
	\rho: {\rm M}_I^{\rm bl} \rightarrow {\rm M}^{\rm spl}_I,
\end{equation}
which induces an isomorphism on the generic fibers.

\begin{Theorem}\label{ResolutionSpl}
The scheme ${\rm M}_I^{\rm bl}$ is regular, and has semi-stable reduction over $O_F$.
\end{Theorem}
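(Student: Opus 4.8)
The plan is to reduce the global statement about ${\rm M}_I^{\rm bl}$ to the local charts, exactly as the excerpt has set up. Since blow-up commutes with open immersions and the subscheme $\tau^{-1}(t\Lambda_\ell, t\Lambda_{n-\ell})$ is contained in $\cup_{i_0=1}^n \U_{i_0}$, the blow-up ${\rm M}_I^{\rm bl}$ is covered by the $\U_{i_0}^{\rm bl}$ together with the open complement ${\rm M}^{\rm spl}_I \setminus \tau^{-1}(t\Lambda_\ell, t\Lambda_{n-\ell})$. On that open complement $\rho$ is an isomorphism, so it suffices to check regularity and semi-stability there; but away from $\tau^{-1}(t\Lambda_\ell, t\Lambda_{n-\ell})$ one is away from the "worst point" locus, and one should argue (as is standard for local models via the $\calG$-action and the local model diagram, combined with Theorem~\ref{SplitFlat}) that the only non-semi-stable behavior of ${\rm M}^{\rm spl}_I$ is concentrated along the singular component $V(J_2)$, whose non-smooth locus maps into the worst point. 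More precisely, the non-smooth locus of $\overline{T}$ is the zero-dimensional subscheme $V(V_2, Z_2)$ found in the proof of Theorem~\ref{SplitFlat}, which lies inside $\tau^{-1}(*)$; hence outside the blow-up center the scheme ${\rm M}^{\rm spl}_I$ already has normal crossings special fiber and is regular, so $\rho$ being an isomorphism there gives semi-stability on that part.

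It then remains to treat the charts $\U_{i_0}^{\rm bl}$. This is precisely the content of Proposition~\ref{prop 61}: for $2\ell+1 \le i_0 \le n$ we have $\U_{i_0}^{\rm bl} = \U_{i_0}$, which is semi-stable by Remark~\ref{rk 43}(b); for $1 \le i_0 \le 2\ell$, the explicit affine patches $D_+(t_i)$ of $\U_{i_0}^{\rm bl}$ were computed to be isomorphic to $\mathbb{A}_{O_F}^{2\ell-1} \times \Spec O_F[v_n, z_{2\ell+1}, t_{2\ell+1}, \dots, t_n]/(t_{2\ell+1}-1,\, v_n z_{2\ell+1} q - 2\pi)$ with $q = \sum t_i t_{n+2\ell+1-i}$, and each such patch is regular with reduced normal-crossings special fiber (three components, pairwise and triple intersections smooth of the expected codimension), invoking \cite[Remark 1.1.1]{Ha}. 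So I would simply assemble these local statements: ${\rm M}_I^{\rm bl}$ is covered by schemes that are either isomorphic to open subschemes of ${\rm M}^{\rm spl}_I$ lying over the regular, normal-crossings locus, or to the explicitly semi-stable charts $D_+(t_i)$; hence ${\rm M}_I^{\rm bl}$ is regular and its special fiber is a reduced divisor with normal crossings over $\Spec O_F$.

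One technical point I would be careful about is the gluing/compatibility: I must make sure the blow-up charts $\U_{i_0}^{\rm bl}$ and the untouched open $\U_{i_0} \setminus \tau^{-1}(*)$ glue correctly to the genuine blow-up of ${\rm M}^{\rm spl}_I$ along $\tau^{-1}(t\Lambda_\ell, t\Lambda_{n-\ell})$, i.e. that blowing up the global center restricts on each $\U_{i_0}$ to blowing up the ideal $I' = (Z_2)$ described in \S\ref{Blow-up}. This follows because blow-up commutes with flat base change and in particular with restriction to opens, and because $\tau^{-1}(t\Lambda_\ell, t\Lambda_{n-\ell}) \cap \U_{i_0}$ is cut out by $(Z_2)$ there; I would state this explicitly. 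I would also note that regularity and the normal-crossings property are local on the source, so checking them chart by chart is legitimate, and that semi-stability over $\Spec O_F$ (flat, regular total space, special fiber a reduced NC divisor) is exactly the conjunction of the pointwise assertions established in the charts.

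The main obstacle, or at least the only place requiring genuine care rather than bookkeeping, is the claim that \emph{outside} the blow-up center ${\rm M}^{\rm spl}_I$ is already regular with normal-crossings special fiber: one must know that the singular locus of ${\rm M}^{\rm spl}_I$ (equivalently, of the local model charts $T$) is contained in $\tau^{-1}(t\Lambda_\ell, t\Lambda_{n-\ell})$. This is where I rely on the fine analysis from \cite{I.Z} imported in the proof of Theorem~\ref{SplitFlat}: the non-smooth locus of $\overline{T}$ is precisely $V(V_2, Z_2)$, a zero-dimensional scheme supported at the worst point, and on the charts $\U_{i_0}$ with $2\ell+1 \le i_0 \le n$ there is no singularity at all. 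Combined with the $\calG$-equivariance of $\tau$ and the fact that the $\calG$-orbit of the worst point exhausts where these charts fail to be smooth, this localizes all bad behavior into the center, completing the reduction.
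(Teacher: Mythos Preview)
Your proposal is correct and its core coincides with the paper's: reduce to Proposition~\ref{prop 61} on the charts $\U_{i_0}^{\rm bl}$. The reduction step differs, however. The paper argues in one stroke: since $*$ is the unique closed $\calG$-orbit in ${\rm M}^{\rm loc}_I\otimes k$, the $\calG$-translates of the open set $\cup_{i_0}\U_{i_0}\supset\tau^{-1}(*)$ cover all of ${\rm M}^{\rm spl}_I$; because $\rho$ is $\calG$-equivariant it therefore suffices to check semi-stability on $\rho^{-1}(\cup_{i_0}\U_{i_0})=\cup_{i_0}\U_{i_0}^{\rm bl}$, and Proposition~\ref{prop 61} finishes. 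Your route instead splits ${\rm M}^{\rm bl}_I$ into the charts plus the complement of the blow-up center, and must then verify separately that ${\rm M}^{\rm spl}_I$ is already semi-stable away from $\tau^{-1}(*)$. That statement is true, but your justification is incomplete as written: knowing that the irreducible components of $\overline T$ are smooth outside $V(V_2,Z_2)$ does not by itself give normal crossings (you still owe transversality of the intersections and regularity of the total space there), and your final appeal to the $\calG$-action is misphrased --- the $\calG$-orbit of $*$ is just $\{*\}$; what you actually need is that every orbit closure contains $*$, so any $\calG$-saturated open neighborhood of $\tau^{-1}(*)$ equals ${\rm M}^{\rm spl}_I$. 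Once you use the $\calG$-action in this form, your separate analysis of the complement becomes unnecessary and you recover exactly the paper's argument.
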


\begin{proof}
Since $\tau^{-1}(t\Lambda_\ell, t\Lambda_{n-\ell})\subset \cup_{i_0=1}^n \U_{i_0}$, it is enough to check the restriction of $\rho^{-1}(\cup_{i_0=1}^n\U_{i_0})$. From \S \ref{Blow-up}, the blow-up of $\U_{i_0}$ along $\U_{i_0}\cap \tau^{-1}((t\Lambda_{\ell}, t\Lambda_{n-\ell}))$ is $\U_{i_0}^{\rm bl}$, i.e., we have $\rho^{-1}(\U_{i_0})=\U_{i_0}^{\rm bl}$. We obtain a $\calG_I$-equivalent, birational, semi-stable resolution 
\begin{equation}
	\rho: \U_{i_0}^{\rm bl}\rightarrow \U_{i_0}
\end{equation}
 by Proposition \ref{prop 61}. Thus, $\rho^{-1}(\cup_{i_0=1}^n\U_{i_0})$ is also regular, and has semi-stable reduction over $O_F$ and so is ${\rm M}^{\rm bl}_I$.
\end{proof}
The semi-stable reduction of ${\rm M}^{\rm bl}_I$ implies that it is flat over $O_F$. Combining all the above, we have the $\calG_I$-equivariant projective morphisms
\begin{equation}\label{eq.721}
{\rm M}^{\rm bl}_I\rightarrow
{\rm M}^{\rm spl}_I\rightarrow	
{\rm M}^{\rm loc}_I,
\end{equation} 
which induces isomorphisms on the generic fibers. Moreover, from Theorem \ref{ResolutionSpl} and the proof of Proposition \ref{prop 61}, we deduce that ${\rm M}_I^{\rm bl}$ is covered by open subschemes which are smooth over $\Spec (O_F[u, x, y]/(uxy-2\pi))$.
\begin{Remark}\label{NotNormalCrossing1}
    \rm{ From Subsection \ref{Blow-up} and Theorem \ref{ResolutionSpl}, we deduce that the special fiber of ${\rm M}_I^{\rm bl}$ has three smooth irreducible components of dimension $n-1$. Also, the exceptional locus of (\ref{eq.721}) is a closed subscheme of $ \mathbb{P}^{n-1}_k\times_k \mathbb{P}^{n-2\ell-1}_k$ which is isomorphic to $ V(I')$ over the intersection with $\U_{i_0}^{\rm bl}$; see the proof of Proposition \ref{prop 61} and Corollary \ref{Cor.72} for more details. Thus, the exceptional locus is a smooth scheme of dimension $n-1$ and is, in fact, an irreducible component of the special fiber of ${\rm M}^{\rm bl}_I$.}
\end{Remark}

\section{Application to Shimura varieties}\label{ShimuraVarSec}
In this section, we demonstrate the most immediate application of Theorems \ref{SplitFlat} and \ref{ResolutionSpl} to ramified unitary Shimura varieties of signature $(n-1,1)$ and where the level subgroup is the stabilizer of a lattice which is neither special nor self-dual. We start by briefly discussing the construction of $p$-adic integral models of Shimura varieties, defined in \cite{RZbook}, which have simple and explicit moduli descriptions and are \'etale locally isomorphic to naive local models. We follow \cite[\S 1]{PR} for the description.

Let $K/\mathbb{Q}$ be an imaginary quadratic extension. Let $W$ be a $n$-dimensional $K$-vector space, equipped with a non-degenerate hermitian form $\phi$. Consider the group $G = GU_n$ of unitary similitudes for $(W,\phi)$ of  dimension $n> 3$ over $K$. Fix a conjugacy class of homomorphisms $h: \Res_{\mathbb{C}/\mathbb{R}}\mathbb{G}_{m,\mathbb{C}}\rightarrow GU_n$ that corresponds to a Shimura datum $(G,X) = (GU_n,X_h)$ of signature $(n-1,1)$. The pair $(G,X)$ gives rise to a Shimura variety $Sh(G,X)$ over the reflex field $E=K$. Let $p$ be an odd prime number which ramifies in $K$ and set $K_1 =K_v $ where $v$ is the unique prime ideal of $K$ above $(p)$. Denote by $\mathcal{O}$ the ring of integers of $K_1$ and let $\pi$ be a uniformizer of $\mathcal{O}$. Set $V=W\otimes_{\QQ} \QQ_p$. 
We assume that the hermitian form $\phi$ is split on $V$, i.e. there is a basis $e_1, \dots, e_n$ such that 
\[
\phi(ae_i,be_{n+1-j}) = \overline{a}b\delta_{i,j} \quad \text{for  all} \quad a,b \in K_1, 
\]
where $a \mapsto \overline{a}$ is the nontrivial element of $\text{Gal}(K_1/\QQ_p)$. Fix such a basis, and consider the self-dual lattice chain
\[
\calL_I=\{\Lambda_i\}_{i=\pm I+n\ZZ}
\]
in $V$, with $\Lambda_i$ as in (\ref{eq 212}) and where $I=\{\ell\}$ is a strongly non-special index (see Definition \ref{DefNonSpecial}).
Let $\mathcal{G}_I = \underline{{\rm Aut}}(\mathcal{L}_I)$ be the (smooth) group scheme over $\mathbb{Z}_p$ with $P_{\{\ell\}} = \mathcal{G}_I(\mathbb{Z}_p)$ and $G\otimes_{\mathbb{Z}_p}\mathbb{Q}_p $ as its generic fiber. Here, we denote by $P_{\{\ell\}}$ the stabilizer of $\Lambda_{\{\ell\}}$ in $G(\mathbb{Q}_p)$. As in \S \ref{ParahoricSbgrs}, when $n$ is odd the stabilizer $P_{\{\ell\}}$ is a parahoric subgroup. When $n$ is even, $P_{\{\ell\}}$ is not a parahoric subgroup since it contains a parahoric subgroup with index 2 and the corresponding parahoric group scheme is its connected component $ P_{\{\ell\}}^\circ$.  

Choose also a sufficiently small compact open subgroup $K^p$ of the prime-to-$p$ finite adelic points $G({\mathbb A}_{f}^p)$ of $G$ and set $\mathbf{K}=K^pP_{\{\ell\}}$ and $\mathbf{K}'=K^pP_{\{\ell\}}^\circ $. As pointed out in \cite[\S 1.3]{PR}, the Shimura varieties ${\rm Sh}_{\mathbf{K}'}(G, X)$ and ${\rm Sh}_{\mathbf{K}}(G, X)$ have isomorphic geometric connected components. Thus, from the point of view of constructing reasonable integral models, we may restrict our attention to ${\rm Sh}_{\mathbf{K}}(G, X)$; since $P_{\{\ell\}}$ corresponds to a lattice set stabilizer, this Shimura variety is given by a simpler moduli problem. The Shimura variety  ${\rm Sh}_{\mathbf{K}}(G, X)$ with complex points
 \[
 {\rm Sh}_{\mathbf{K}}(G, X)(\mathbb{C})=G(\mathbb{Q})\backslash X\times G({\mathbb A}_{f})/\mathbf{K}
 \]
is of PEL type and has a canonical model over the reflex field $K$. 

We consider the moduli functor $\mathcal{A}^{\rm naive}_{\mathbf{K}}$ over $\Spec \mathcal{O} $ given in \cite[Definition 6.9]{RZbook}:\\
A point of $\mathcal{A}^{\rm naive}_{\mathbf{K}}$ with values in the $\Spec \mathcal{O} $-scheme $S$ is the isomorphism class of quadruples $(A,\iota, \bar{\lambda}, \bar{\eta})$ consisting of:
\begin{enumerate}
    \item An object $(A,\iota)$, where $A$ is an abelian scheme with relative dimension $n$ over $S$ (terminology
of \cite{RZbook}), compatibly endowed with an
action of $\calO$: 
\[ \iota: \calO \rightarrow \text{End} \,A \otimes \mathbb{Z}_p.\] 
    
\item A $\mathbb{Q}$-homogeneous principal polarization $\bar{\lambda}$ on $(A,\iota)$ containing a polarization $\lambda$ such that $\ker \lambda\subset A[\iota(\pi)]$ of height $2\ell$.
    \item A $K^p$-level structure
    \[
\bar{\eta} : H_1 (A, {\mathbb A}_{f}^p) \simeq W \otimes  {\mathbb A}_{f}^p \, \text{ mod} \, K^p
    \]
which respects the bilinear forms on both sides up to a constant in $({\mathbb A}_{f}^p)^{\times}$ (see loc. cit. for
details).

The abelian scheme $A$ should satisfy the determinant condition (i) of loc. cit.
\end{enumerate}
(We refer the reader to loc.cit., 6.3–6.8 and \cite[\S 3]{P} for more details on the definitions of the terms employed here.) The functor $\mathcal{A}^{\rm naive}_{\mathbf{K}}$ is representable by a quasi-projective scheme over $\mathcal{O}$; recall that we assume that $K^p$ is sufficiently small. As in loc. cit., since the Hasse principle is satisfied for the unitary group we can see that there is a natural isomorphism
\[
\mathcal{A}^{\rm naive}_{\mathbf{K}} \otimes_{\calO} K_1 = {\rm Sh}_{\mathbf{K}}(G, X)\otimes_{K} K_1.
\]
Extending the abelian scheme $A$ and its dual abelian scheme $A^\vee$ periodically, we get an $\calL_I$-set of abelian schemes. Denote by $H^1_{dR}(A)$ the first de Rham cohomology sheaf, and let $M(A)=H^1_{dR}(A)^\vee$ be the dual of de Rham cohomology. Here $M(A)$ is a finite locally free $\calO_S$-module of rank $2n$. Thus we have the covariant Hodge filtration
\begin{equation}
0\rightarrow \omega_{A^\vee}\rightarrow M(A)\rightarrow \text{Lie}\,(A)\rightarrow 0,	
\end{equation}
where $\omega_{A^\vee}$ is a locally free $\calO_S$-module of rank $n$ (see \cite[\S 6.3, \S 14]{H} and \cite[\S 5.3]{Sm3} for more details). 
As is explained in \cite{RZbook} (see also \cite{P}), the naive local model ${\rm M}_I^{\rm naive}$ is connected to the moduli scheme $\mathcal{A}^{\rm naive}_{\mathbf{K}}$ via the local model diagram 
\[
\mathcal{A}^{\rm naive}_{\mathbf{K}} \ \xleftarrow{\psi_1} \Tilde{\mathcal{A}}^{\rm naive}_{\mathbf{K}} \xrightarrow{\psi_2} {\rm M}_I^{\rm naive}
\]
where the morphism $\psi_1$ is a $\mathcal{G}_I$-torsor and $\psi_2$ is a smooth and $\mathcal{G}_I$-equivariant morphism. Equivalently, there should be a relatively
representable morphism of algebraic stacks 
 \[
\phi:  \mathcal{A}^{\rm naive}_{\mathbf{K}} \to [  {\rm M}_I^{\rm naive}/ \mathcal{G}_I\otimes_{\mathbb{Z}_p}\mathcal{O}]
 \]
which is smooth of relative dimension $\text{dim}(G)$. Let us form the cartesian product of $\phi$ with the morphisms ${\rm M}^{\rm spl}_I \rightarrow \Mloc_I \hookrightarrow {\rm M}_I^{\rm naive}$, where ${\rm M}^{\rm spl}_I  $ and $\Mloc_I  $ are the splitting and local model respectively (see \S \ref{LocalModelVariantsSplit} for more details),
\[
\begin{matrix}
\mathcal{A}^{\rm spl}_{\mathbf{K}} &\longrightarrow & [{\rm M}^{\rm spl}_I/ \mathcal{G}_{I,\calO}] \\
\Big\downarrow && \Big\downarrow \\
\mathcal{A}^{\rm loc}_{\mathbf{K}} &\longrightarrow& [{\rm M}^{\rm loc}_I/ \mathcal{G}_{I,\calO}] \\
\Big\downarrow && \Big\downarrow \\
\mathcal{A}^{\rm naive}_{\mathbf{K}} &\longrightarrow & [{\rm M}^{\rm naive}_I/ \mathcal{G}_{I,\calO}] .
\end{matrix}
\]
The scheme $\mathcal{A}^{\rm loc}_{\mathbf{K}}$ is a closed subscheme of $\mathcal{A}^{\rm naive}_{\mathbf{K}}$ and is the image of $\mathcal{A}^{\rm spl}_{\mathbf{K}}$ in $\mathcal{A}^{\rm naive}_{\mathbf{K}}$. $\mathcal{A}^{\rm loc}_{\mathbf{K}}$ is a linear modification of $\mathcal{A}^{\rm naive}_{\mathbf{K}}$ in the sense of in the sense of \cite[\S 2]{P} (see also \cite[\S 15]{PR2}); similarly, $\mathcal{A}^{\rm spl}_{\mathbf{K}}$ is a linear modification of $\mathcal{A}^{\rm naive}_{\mathbf{K}}  $.
 \begin{Theorem}\label{SplLM}
     
 For every $K^p$ as above, there is a
 scheme $\mathcal{A}^{\rm spl}_{\mathbf{K}}$, flat over $\Spec(\calO)$, 
 with
 \[
\mathcal{A}^{\rm spl}_{\mathbf{K}}\otimes_{\calO} K_1={\rm Sh}_{\mathbf{K}}(G, X)\otimes_{K} K_1,
 \]
 and which supports a local model diagram
  \begin{equation}\label{LMdiagramReg1}
\begin{tikzcd}
&\wti{\mathcal{A}}^{\rm spl}_{\mathbf{K}}(G, X)\arrow[dl, "\pi^{\rm spl}_K"']\arrow[dr, "q^{\rm spl}_K"]  & \\
\mathcal{A}^{\rm spl}_{\mathbf{K}}  &&  {\rm M}_I^{\rm spl}
\end{tikzcd}
\end{equation}
such that:
\begin{itemize}
\item[a)] $\pi^{\rm spl}_{\mathbf{K}}$ is a $\mathcal{G}_I$-torsor 
and $q^{\rm spl}_{\mathbf{K}}$ is smooth and $\calG_I$-equivariant.

\item[b)] $\mathcal{A}^{\rm spl}_{\mathbf{K}}$ is normal and Cohen-Macaulay and has a reduced special fiber. 
\end{itemize}
 \end{Theorem}
\begin{proof}  
From the above we have that $\mathcal{A}^{\rm spl}_{\mathbf{K}}$ is a linear modification and this gives (a).  Using that ${\rm M}^{\rm spl}_I\rightarrow {\rm M}^{\rm loc}_I $ is projective from Corollary \ref{ProjMor}, we deduce that $\mathcal{A}^{\rm spl}_{\mathbf{K}} $ is represented by a scheme (see \cite[\S 2]{P}). Lastly, from Theorem \ref{SplitFlat} we deduce part (b).
\end{proof} 
Moreover, we can obtain an explicit moduli description of $\mathcal{A}^{\rm spl}_{\mathbf{K}}$ by adding in the moduli problem of $\mathcal{A}^{\rm naive}_{\mathbf{K}}$ an additional subspace in the Hodge filtration $ \omega_{A^\vee} \subset M(A)$.
More precisely, $\mathcal{A}^{\rm spl}_{\mathbf{K}}$ associates to an $\calO$-scheme $S$ the set of isomorphism classes of objects $(A,\iota, \bar{\lambda}, \bar{\eta},\mathscr{G}) $. Here $(A,\iota,\bar{\lambda}, \bar{\eta})$ is an object of  $\mathcal{A}^{\rm naive}_{\mathbf{K}}(S).$ 
The final ingredient $\mathscr{G}$ of an object of $\mathcal{A}^{\rm spl}_{\mathbf{K}}$ is the subspace $ \mathscr{G} \subset \omega_{A^\vee} \subset M(A) $ of rank one which satisfies the following conditions: 
\begin{equation}\label{eq 803}
\calO ~\text{acts by $\sigma_1$ on $\scrG$, and by $\sigma_2$ on $\omega_{A^\vee}/\scrG$},
\end{equation}
where $\Gal(K_1/\QQ_p)=\{\sigma_1=id, \sigma_2\}$. Note that condition (\ref{eq 803}) translates to condition (\ref{split.cond.}) by the isomorphism $M(A)\simeq \Lambda_{-\ell}\otimes \calO_S\simeq \Lambda_{n-\ell}\otimes \calO_S$. There is a forgetful morphism
\[
\tau :   \mathcal{A}^{\rm spl}_{\mathbf{K}} \longrightarrow \mathcal{A}^{\rm naive}_{\mathbf{K}}
\]
defined by $(A,\iota, \bar{\lambda}, \bar{\eta},\mathscr{G}) \mapsto (A,\iota,\bar{\lambda}, \bar{\eta}) $. 

 \begin{Theorem}\label{RegLM}
     
 For every $K^p$ as above, there is a
 scheme $\mathcal{A}^{\rm bl}_{\mathbf{K}}$, flat over $\Spec(\calO)$, 
 with
 \[
\mathcal{A}^{\rm bl}_{\mathbf{K}}\otimes_{\calO} K_1={\rm Sh}_{\mathbf{K}}(G, X)\otimes_{K} K_1,
 \]
 and which supports a local model diagram
  \begin{equation}\label{LMdiagramReg}
\begin{tikzcd}
&\wti{\mathcal{A}}^{\rm bl}_{\mathbf{K}}(G, X)\arrow[dl, "\pi^{\rm bl}_K"']\arrow[dr, "q^{\rm bl}_K"]  & \\
\mathcal{A}^{\rm bl}_{\mathbf{K}}  &&  {\rm M}_I^{\rm bl}
\end{tikzcd}
\end{equation}
such that:
\begin{itemize}
\item[a)] $\pi^{\rm spl}_{\mathbf{K}}$ is a $\mathcal{G}_I$-torsor 
and $q^{\rm spl}_{\mathbf{K}}$ is smooth and $\calG_I$-equivariant.

\item[b)] $\mathcal{A}^{\rm bl}_{\mathbf{K}}$ is regular and has special fiber which is a divisor with normal crossings. 
\end{itemize}
 \end{Theorem}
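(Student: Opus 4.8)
The strategy is to realise $\mathcal{A}^{\rm bl}_{\mathbf{K}}$ as a blow-up that matches the blow-up ${\rm M}_I^{\rm bl}$ of the splitting model through the local model diagram of Theorem~\ref{SplLM}, and then to transport the conclusion of Theorem~\ref{ResolutionSpl} along it. First I would fix the centre: let $\mathcal{Z}\subset\mathcal{A}^{\rm spl}_{\mathbf{K}}$ be the closed subscheme parametrising those $(A,\iota,\bar\lambda,\bar\eta,\mathscr{G})$ for which $\iota(\pi)$ annihilates $\omega_{A^\vee}$. Under the comparison $M(A)\simeq \Lambda_{n-\ell}\otimes\calO_S$ of \S\ref{ShimuraVarSec} this is exactly the locus where the Hodge filtration $\calF_{n-\ell}$ equals $t\Lambda_{n-\ell}$, so $\mathcal{Z}$ corresponds, under the smooth $\calG$-equivariant map $q^{\rm spl}_{\mathbf{K}}$, to the closed $\calG$-invariant subscheme $C:=\tau^{-1}(t\Lambda_\ell,t\Lambda_{n-\ell})\subset{\rm M}_I^{\rm spl}$ used in \S\ref{ResolSpl}; in particular $(q^{\rm spl}_{\mathbf{K}})^{-1}(C)=(\pi^{\rm spl}_{\mathbf{K}})^{-1}(\mathcal{Z})$. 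Since $\pi$ is a unit on the generic fibre, $\mathcal{Z}$ is supported in the special fibre. Define $\mathcal{A}^{\rm bl}_{\mathbf{K}}:={\rm Bl}_{\mathcal{Z}}\,\mathcal{A}^{\rm spl}_{\mathbf{K}}$; because $\mathcal{Z}$ lies in the special fibre, $\mathcal{A}^{\rm bl}_{\mathbf{K}}\otimes_{\calO}K_1=\mathcal{A}^{\rm spl}_{\mathbf{K}}\otimes_{\calO}K_1={\rm Sh}_{\mathbf{K}}(G,X)\otimes_K K_1$ by Theorem~\ref{SplLM}.

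Next I would build the local model diagram \eqref{LMdiagramReg}. Blowing up commutes with flat base change, and $\pi^{\rm spl}_{\mathbf{K}}$ is a $\calG$-torsor while $q^{\rm spl}_{\mathbf{K}}$ is smooth (Theorem~\ref{SplLM}(a)); hence blowing up $\wti{\mathcal{A}}^{\rm spl}_{\mathbf{K}}$ along $(q^{\rm spl}_{\mathbf{K}})^{-1}(C)=(\pi^{\rm spl}_{\mathbf{K}})^{-1}(\mathcal{Z})$ yields
\[
\wti{\mathcal{A}}^{\rm bl}_{\mathbf{K}}\;=\;\wti{\mathcal{A}}^{\rm spl}_{\mathbf{K}}\times_{{\rm M}_I^{\rm spl}}{\rm M}_I^{\rm bl}\;=\;\wti{\mathcal{A}}^{\rm spl}_{\mathbf{K}}\times_{\mathcal{A}^{\rm spl}_{\mathbf{K}}}\mathcal{A}^{\rm bl}_{\mathbf{K}},
\]
and the two projections give $\pi^{\rm bl}_{\mathbf{K}}\colon\wti{\mathcal{A}}^{\rm bl}_{\mathbf{K}}\to\mathcal{A}^{\rm bl}_{\mathbf{K}}$, the base change of $\pi^{\rm spl}_{\mathbf{K}}$ and hence again a $\calG$-torsor (here one uses that $\mathcal{Z}$, being the preimage of the $\calG$-invariant $C$, is $\calG$-stable, so the torsor structure descends to the blow-ups), and $q^{\rm bl}_{\mathbf{K}}\colon\wti{\mathcal{A}}^{\rm bl}_{\mathbf{K}}\to{\rm M}_I^{\rm bl}$, the base change of $q^{\rm spl}_{\mathbf{K}}$ along the blow-up morphism $\rho\colon{\rm M}_I^{\rm bl}\to{\rm M}_I^{\rm spl}$ and hence smooth and $\calG$-equivariant. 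This establishes part (a).

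For part (b), the diagram just built shows that every point of $\mathcal{A}^{\rm bl}_{\mathbf{K}}$ has an \'etale neighbourhood which is \'etale over ${\rm M}_I^{\rm bl}$. By Theorem~\ref{ResolutionSpl}, ${\rm M}_I^{\rm bl}$ is regular and is covered by open subschemes smooth over $\Spec\big(O_F[u,x,y]/(uxy-2\pi)\big)$, whose special fibre $\Spec\big(k[u,x,y]/(uxy)\big)$ is a reduced normal crossings divisor. Regularity and the property of having special fibre a divisor with normal crossings are preserved and reflected by \'etale morphisms, so $\mathcal{A}^{\rm bl}_{\mathbf{K}}$ is regular with special fibre a divisor with normal crossings; flatness over $\calO$ follows, for instance, from the factorisation (\'etale locally) through the $O_F$-flat scheme $\Spec\big(O_F[u,x,y]/(uxy-2\pi)\big)$ (recall $2$ is a unit, as $p$ is odd), or from regularity of the total space together with the fact that $\pi$ is a nonzerodivisor on it.

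The only non-formal input is Theorem~\ref{ResolutionSpl}; everything else is the local-model-diagram formalism together with the elementary fact that blow-ups commute with flat base change. Accordingly, the main point requiring care is bookkeeping rather than mathematics: one must verify that the moduli-theoretic centre $\mathcal{Z}$ really pulls back, under $q^{\rm spl}_{\mathbf{K}}$, to the centre $C=\tau^{-1}(t\Lambda_\ell,t\Lambda_{n-\ell})$ of \S\ref{ResolSpl} --- i.e.\ that ``$\iota(\pi)$ kills $\omega_{A^\vee}$'' matches ``$\calF_{n-\ell}=t\Lambda_{n-\ell}$'' under $M(A)\simeq\Lambda_{n-\ell}\otimes\calO_S$ --- and that this centre is $\calG$-invariant, so that the torsor $\pi^{\rm spl}_{\mathbf{K}}$ and its smooth equivariant companion $q^{\rm spl}_{\mathbf{K}}$ ascend and descend compatibly to the blow-ups.
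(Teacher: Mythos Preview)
Your proposal is correct and follows essentially the same approach as the paper: build $\wti{\mathcal{A}}^{\rm bl}_{\mathbf{K}}$ via the fibre product $\wti{\mathcal{A}}^{\rm spl}_{\mathbf{K}}\times_{{\rm M}_I^{\rm spl}}{\rm M}_I^{\rm bl}$, use that blowing up commutes with \'etale (flat) base change to identify it with the blow-up along the locus where $\pi$ kills $\omega_{A^\vee}$, and read off (b) from Theorem~\ref{ResolutionSpl} via \'etale-local isomorphism with ${\rm M}_I^{\rm bl}$. The only cosmetic difference is the order of construction: the paper first forms $\wti{\mathcal{A}}^{\rm bl}_{\mathbf{K}}$ and then \emph{defines} $\mathcal{A}^{\rm bl}_{\mathbf{K}}$ as the $\calG$-quotient (appealing to \cite[\S 2]{P} for representability), whereas you define $\mathcal{A}^{\rm bl}_{\mathbf{K}}$ directly as ${\rm Bl}_{\mathcal{Z}}\,\mathcal{A}^{\rm spl}_{\mathbf{K}}$ and recover the torsor by base change; both routes yield the same diagram.
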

 \begin{proof}  
From the local model diagram (\ref{LMdiagramReg1}) we have that $q^{\rm spl}_{\mathbf{K}}:\wti{\mathcal{A}}^{\rm spl}_{\mathbf{K}}(G, X) \rightarrow {\rm M}_I^{\rm spl} $ is smooth and $\calG_I$-equivariant. We set
\[
\wti{\mathcal{A}}^{\rm bl}_{\mathbf{K}}=\wti{\mathcal{A}}^{\rm spl}_{\mathbf{K}}\times_{{\rm M}_I^{\rm spl}}{\rm M}_I^{\rm bl}
\]
which carries a diagonal $\mathcal{G}_I$-action. Since ${\rm M}_I^{\rm bl} \longrightarrow {\rm M}_I^{\rm spl}$ is given by a blow-up, is projective, and we can see  (\cite[\S 2]{P}) that the quotient
\[
\pi^{\rm reg}_K: \wti{\mathcal{A}}^{\rm bl}_{\mathbf{K}} \longrightarrow \mathcal{A}^{\rm bl}_{\mathbf{K}}:=\calG_I\backslash \wti{\mathcal{A} }^{\rm bl}_K(G, X)
\]
is represented by a scheme and gives a $\calG_I$-torsor. In fact, since blowing-up commutes with \'etale localization, $ \mathcal{A}^{\rm bl}_{\mathbf{K}}$ is the blow-up of $\mathcal{A}^{\rm spl}_{\mathbf{K}} $ along the locus of its special fiber where $\pi \omega_{A^\vee}=0$. The projection gives a smooth $\calG_I$-morphism
\[
q^{\rm reg}_{\mathbf{K}}: \wti{\mathcal{A}}^{\rm bl}_{\mathbf{K}} \longrightarrow {\rm M}_I^{\rm bl}
\]
which completes the local model diagram. Property (b) follows from Theorem \ref{ResolutionSpl} and property (a) which implies that $ \mathcal{A}^{\rm bl}_{\mathbf{K}}$ and
$ {\rm M}_I^{\rm bl}$ are locally isomorphic for the \'etale topology. 
\end{proof} 
\begin{Corollary}
$\mathcal{A}^{\rm bl}_{\mathbf{K}}$ is the blow-up of $\mathcal{A}^{\rm spl}_{\mathbf{K}} $ along the locus of its special fiber where the Hodge filtration $ w_{A^\vee}  $ is annihilated by the action of the uniformizer $\pi$. 
\end{Corollary}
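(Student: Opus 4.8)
The plan is to obtain the corollary from the construction of $\mathcal{A}^{\rm bl}_{\mathbf{K}}$ carried out in the proof of Theorem~\ref{RegLM}, together with the compatibility of blowing-up with flat (in particular smooth or \'etale) base change. Write $\mathcal{Z}\subset\mathcal{A}^{\rm spl}_{\mathbf{K}}\otimes_{\calO}k$ for the closed subscheme of the special fibre cut out by the requirement that $\iota(\pi)$ annihilate the Hodge filtration $\omega_{A^\vee}$; what must be produced is a canonical $\mathcal{A}^{\rm spl}_{\mathbf{K}}$-isomorphism $\mathcal{A}^{\rm bl}_{\mathbf{K}}\cong{\rm Bl}_{\mathcal{Z}}\bigl(\mathcal{A}^{\rm spl}_{\mathbf{K}}\bigr)$.

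First I would transport $\mathcal{Z}$ through the local model diagram (\ref{LMdiagramReg1}) and match it with the centre $\tau^{-1}((t\Lambda_\ell,t\Lambda_{n-\ell}))$ of the blow-up ${\rm M}_I^{\rm bl}\to{\rm M}_I^{\rm spl}$ of \S\ref{ResolSpl}. Under the trivialization $M(A)\simeq\Lambda_{n-\ell}\otimes\calO_S$ used to define $\mathcal{A}^{\rm spl}_{\mathbf{K}}$, the sub-bundle $\omega_{A^\vee}$ corresponds to $\calF_{n-\ell}$ and $\iota(\pi)$ to the operator $t$, so the defining condition of $\mathcal{Z}$ becomes $t\calF_{n-\ell}=0$; since on the special fibre $t$ has constant rank $n$, this forces $\calF_{n-\ell}=t\Lambda_{n-\ell}\otimes k$ and, via the chain inclusions $\calF_\ell\subset\calF_{n-\ell}\subset t^{-1}\calF_\ell$, also $\calF_\ell=t\Lambda_\ell\otimes k$, i.e.\ the worst point. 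Hence $(q^{\rm spl}_{\mathbf{K}})^{-1}\bigl(\tau^{-1}((t\Lambda_\ell,t\Lambda_{n-\ell}))\bigr)=(\pi^{\rm spl}_{\mathbf{K}})^{-1}(\mathcal{Z})$ as $\mathcal{G}$-stable closed subschemes of $\wti{\mathcal{A}}^{\rm spl}_{\mathbf{K}}(G,X)$. Next, since ${\rm M}_I^{\rm bl}\to{\rm M}_I^{\rm spl}$ is by its very construction the blow-up along $\tau^{-1}((t\Lambda_\ell,t\Lambda_{n-\ell}))$ and $q^{\rm spl}_{\mathbf{K}}$ is smooth, the fibre product $\wti{\mathcal{A}}^{\rm bl}_{\mathbf{K}}=\wti{\mathcal{A}}^{\rm spl}_{\mathbf{K}}\times_{{\rm M}_I^{\rm spl}}{\rm M}_I^{\rm bl}$ is the blow-up of $\wti{\mathcal{A}}^{\rm spl}_{\mathbf{K}}(G,X)$ along $(\pi^{\rm spl}_{\mathbf{K}})^{-1}(\mathcal{Z})$. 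Finally I would descend along the faithfully flat $\mathcal{G}$-torsor $\pi^{\rm spl}_{\mathbf{K}}$: the same flat-base-change principle gives $\wti{\mathcal{A}}^{\rm bl}_{\mathbf{K}}\cong{\rm Bl}_{\mathcal{Z}}(\mathcal{A}^{\rm spl}_{\mathbf{K}})\times_{\mathcal{A}^{\rm spl}_{\mathbf{K}}}\wti{\mathcal{A}}^{\rm spl}_{\mathbf{K}}$, and passing to the quotient $\mathcal{A}^{\rm bl}_{\mathbf{K}}=\calG\backslash\wti{\mathcal{A}}^{\rm bl}_{\mathbf{K}}$ yields the asserted isomorphism.

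The step that demands genuine care — and the main obstacle — is the identification of the centre in the second step: one must check that the dictionary $\omega_{A^\vee}\leftrightarrow\calF_{n-\ell}$, $\iota(\pi)\leftrightarrow t$ holds not merely fibrewise but as an isomorphism of $\mathcal{G}$-equivariant families over arbitrary $\calO$-schemes, so that $\mathcal{Z}$ and $\tau^{-1}((t\Lambda_\ell,t\Lambda_{n-\ell}))$ coincide scheme-theoretically and $\mathcal{G}$-equivariantly; this is precisely the content, recorded in \S\ref{ShimuraVarSec}, of the claim that condition (\ref{eq 803}) translates into condition (\ref{split.cond.}) under $M(A)\simeq\Lambda_{n-\ell}\otimes\calO_S$. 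Granting this, the remainder is the formal fact — already used in the proof of Theorem~\ref{RegLM} — that blowing-up commutes with \'etale localization, combined with descent of blow-ups along $\mathcal{G}$-invariant centres.
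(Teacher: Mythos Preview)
Your proposal is correct and follows the same line as the paper's own argument: the paper simply records, inside the proof of Theorem~\ref{RegLM}, that ``since blowing-up commutes with \'etale localization, $\mathcal{A}^{\rm bl}_{\mathbf{K}}$ is the blow-up of $\mathcal{A}^{\rm spl}_{\mathbf{K}}$ along the locus of its special fiber where $\pi\omega_{A^\vee}=0$,'' and the corollary's proof is just a pointer to that sentence. Your write-up is a careful unpacking of exactly this --- identifying the centre via the dictionary $\omega_{A^\vee}\leftrightarrow\calF_{n-\ell}$, $\iota(\pi)\leftrightarrow t$, pulling back the blow-up along the smooth map $q^{\rm spl}_{\mathbf{K}}$, and descending along the $\mathcal{G}$-torsor $\pi^{\rm spl}_{\mathbf{K}}$ --- so the approaches coincide. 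One small remark: your deduction that $\calF_{n-\ell}=t\Lambda_{n-\ell}$ forces $\calF_\ell=t\Lambda_\ell$ is cleaner via the duality condition~(3) (i.e.\ $\calF_\ell=\calF_{n-\ell}^\perp$ and $(t\Lambda_{n-\ell})^\perp=t\Lambda_\ell$) than via the chain inclusions alone, but the conclusion is the same.
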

\begin{proof}
    It follows from the proof of the above theorem. 
\end{proof}
\begin{Remarks}
{\rm 
\begin{enumerate}
    \item From the above discussion, we can obtain a semi-stable integral model for the Shimura variety ${\rm Sh}_{\mathbf{K}'}(G, X)$ where $\mathbf{K}'=K^pP_I^\circ $. In this case, the corresponding local models $\Mloc_I$ of ${\rm Sh}_{\mathbf{K}'}(G, X)$ agree with the Pappas-Zhu local models $ \mathbb{M}_{P_I^\circ}(G,\mu_{r,s})$ for the local model triples $(G,\{\mu_{r,s}\},P_I^\circ)$. (See \cite[Theorem 1.2]{PZ} and \cite[\S 8]{PZ} for more details.)
    \item Similar results can be obtained for corresponding Rapoport-Zink formal schemes. (See \cite[\S 4]{HPR} for an example of this parallel treatment.)
\end{enumerate}
}
\end{Remarks}

\section{Acknowledgments} We thank G. Pappas for several useful comments on a preliminary version of this article. The first author would like to thank W. Zhang for presenting this problem to him. We also thank the referee for their useful suggestions. The first author was supported by CRC 1442 Geometry: Deformations and Rigidity and the Excellence Cluster of the Universit\"{a}t M\"{u}nster.

\Addresses

\begin{thebibliography}{00}


\bibitem{A} K. Arzdorf, \textit{On local models with special parahoric level structure}, Michigan Math. J. 58 (2009), 683-710.


\bibitem{BH} S. Bijakowski, V. Hernandez, {\it On the geometry of the Pappas-Rapoport models in the (AR) case}, Pacific J. Math. 334 (2025), no.~1, 107--142.

\bibitem{BZZ} S. Bijakowski, I. Zachos, Z. Zhao, \textit{On the geometry of splitting models}, 2025, arXiv:2501.05950.

\bibitem{BHKR} J. Bruinier, B. Howard, S. Kudla, M. Rapoport, T. Yang, \textit{Modularity of generating series of divisors on unitary Shimura varieties}, Asterisque, 421 (2020), 8–125.



\bibitem{H} T. Haines, \textit{Introduction to Shimura varieties with bad reduction of parahoric type}, in
Harmonic analysis, the trace formula, and Shimura varieties, Clay Math. Proc., vol. 4, American Mathematical Society, Providence, RI, 2005, p. 583–642.


\bibitem{Ha} U.T. Hartl, \textit{Semi-stability and base change}, Arch. Math. 77 (2001), 215–221.

\bibitem{HLSY} Q. He, C. Li, Y. Shi, T. Yang, \textit{A proof of the Kudla-Rapoport conjecture for Krämer models}, Invent. Math., 234(2):721–817, 2023.

\bibitem{HPR} X. He, G. Pappas, M. Rapoport, \textit{Good and semi-stable reductions of Shimura varieties}, Journal de
l’Ecole polytechnique - Math., Vol. 7 (2020), p. 497–571.




\bibitem{Kr} N. Krämer, \textit{Local models for ramified unitary groups}, Abh. Math. Sem. Univ. Hamburg 73
(2003), p. 67–80.

\bibitem{Luo} Y. Luo, \textit{On the moduli description of ramified unitary Local models of signature $(n -1, 1)$}, 2024, arXiv:2404.10723.


\bibitem{P} G. Pappas, \textit{On the arithmetic moduli schemes of PEL Shimura varieties}, J. Alg. Geom. 9 (2000),
577–605.


\bibitem{PR2} G. Pappas, M. Rapoport, \textit{Local models in the ramified case. II. Splitting
models}, Duke Math. Journal 127 (2005), 193-250.

\bibitem{PR} G. Pappas, M. Rapoport, \textit{Local models in the ramified case. III. Unitary
groups}, J. Inst. Math. Jussieu 8 (2009), no. 3, 507-564.


\bibitem{PRS} G. Pappas, M. Rapoport, B. Smithling, \textit{Local models of Shimura varieties, I. Geometry and
combinatorics}, in Handbook of moduli. Vol. III, Adv. Lect. Math. (ALM), vol. 26, Int. Press,
Somerville, MA, 2013, p. 135–217.

\bibitem{PaZa} G. Pappas, I. Zachos, \textit{Regular integral models for Shimura varieties of orthogonal type}, Compos. Math. 158(4) (2022), 831-867.  

\bibitem{PZ} G. Pappas, X. Zhu, \textit{Local models of Shimura varieties and a conjecture of Kottwitz}, Invent. math. 194 (2013), 147–254.



\bibitem  {RZbook} M.\ Rapoport, T.\ Zink,  \textit{Period spaces for $p$--divisible groups}.
Ann.\ of Math. Studies {141}, Princeton University Press, Princeton, NJ, 1996. 

\bibitem{Richarz}  T.\ Richarz, \textit{Schubert varieties in twisted affine flag varieties and local models}, J. Algebra, 375:121–147, 2013.

\bibitem{Sm1} B. Smithling, \textit{Topological  flatness of local models for ramified unitary groups. I. The odd dimensional case.} Adv. Math. 226 (2011), 3160-3190.

\bibitem{Sm2} B. Smithling, \textit{Topological flatness of local models for ramified unitary groups. II. The
even dimensional case}, J. Inst. Math. Jussieu 13 (2014), no. 2, p. 303-393.

\bibitem{Sm3} B. Smithling, \textit{On the moduli description of local models for ramified unitary groups}, Int. Math. Res. Not., (24):13493-13532, 2015.



\bibitem{I.Z} I. Zachos, \textit{On orthogonal local models of Hodge type
}, International Mathematics Research Notices, Volume 2023, Issue 13, (2023), 10799–10836.

\bibitem{Zac1} I. Zachos, \textit{Semi-stable models for some unitary Shimura varieties over ramified primes}, Algebra Number Theory, 18(2024), no.9, 1715--1736.

\bibitem{ZacZhao} I. Zachos, Z. Zhao, \textit{Semi-stable and splitting models for unitary Shimura varieties over ramified places. I}, 2023, arXiv:2309.16463.   



\bibitem{Zhu} X. Zhu, \textit{On the coherence conjecture of Pappas and Rapoport}, Ann. of Math. 180 (2014), 1-85.

\end{thebibliography}
\end{document}